\setlist[enumerate]{leftmargin=.5in}
\setlist[itemize]{leftmargin=.5in}
\title{Embrace rejection: 
Kernel matrix approximation \\ by 
accelerated 
randomly pivoted Cholesky\thanks{Date: October 4, 2024.  Revised: April 1, 2025
\funding{ENE acknowledges support from the  U.S. Department of Energy, Office of Science, Office of Advanced Scientific Computing Research,  Department of Energy Computational Science Graduate Fellowship under Award Number DE-SC0021110.
JAT and RJW acknowledge support from the Office of Naval Research through Awards N00014-18-1-2363 and N00014-24-1-2223, from the National Science Foundation through FRG Award 1952777, and from the Caltech Carver Mead New Adventures Fund.}}}
\author{Ethan N. Epperly\thanks{Division of Computing and Mathematical Sciences, California Institute of Technology, Pasadena, CA 91125 USA (\email{eepperly@caltech.edu}, \email{jtropp@caltech.edu}).}
\and Joel A. Tropp\footnotemark[2] \and Robert J. Webber\thanks{Department of Mathematics, University of California San Diego, La Jolla, CA 92093 USA (\email{rwebber@ucsd.edu}).}}
\newcommand{\real}{\mathbb{R}}
\newcommand{\complex}{\mathbb{C}}
\renewcommand{\left}{\mleft}
  \renewcommand{\right}{\mright}
\newcommand{\boxedtext}[1]{\begin{tcolorbox}[colback=white,colframe=black,width=\columnwidth,boxsep=2pt,arc=4pt]
    #1
\end{tcolorbox}}
\DeclareMathOperator{\tr}{tr}
\DeclareMathOperator{\diag}{diag}
\newcommand{\mat}[1]{\boldsymbol{#1}}
\renewcommand{\vec}[1]{\boldsymbol{#1}}
\newcommand{\norm}[1]{\left\| #1 \right\|}
\DeclareMathOperator{\rank}{rank}
\newcommand{\expmat}[1]{\begin{bmatrix} #1 \end{bmatrix}}
\newcommand{\twobytwo}[4]{\expmat{#1 & #2 \\ #3 & #4}}
\newcommand{\onebytwo}[2]{\expmat{#1 & #2}}
\newcommand{\Id}{\mathbf{I}}
\DeclareMathOperator{\expect}{\mathbb{E}}
\DeclareMathOperator{\prob}{\mathbb{P}}
\newcommand{\order}{\mathcal{O}}
\newcommand{\set}[1]{\mathsf{#1}}
\renewcommand{\Re}{\mathrm{Re}}
\renewcommand{\hat}[1]{\widehat{#1}}
\renewcommand{\tilde}[1]{\widetilde{#1}}
\definecolor{mygreen}{RGB}{28,172,0} 
\definecolor{mylilas}{RGB}{170,55,241}
\newcommand{\QR}{\textsf{QR}\xspace}
\crefname{lstlisting}{Program}{Programs}
\Crefname{lstlisting}{Program}{Programs}
\renewcommand{\epsilon}{\varepsilon}
\newcommand{\RPCholesky}{\textsc{RP\-Chol\-esky}\xspace}
\newcommand{\RejectChol}{\textsc{RejectionSampleSubmatrix}\xspace}
\newcommand{\Ahat}{\smash{\mat{\hat{A}}}}
\newcommand{\Bhat}{\smash{\mat{\hat{B}}}}
\renewcommand*{\backref}[1]{}
\renewcommand*{\backrefalt}[4]{%
	\ifcase #1 %
	(No citations.)
	\or
	(Cited on page #2.)
	\else
	(Cited on pages #2.)
	\fi
}
\begin{document}
	
\maketitle
	
\begin{abstract}
Randomly pivoted Cholesky (\RPCholesky) is an algorithm for constructing a low-rank approximation of a positive-semidefinite matrix using a small number of columns.
This paper develops an accelerated version of \RPCholesky that employs block matrix computations and rejection sampling to efficiently simulate the execution of the original algorithm.
For the task of approximating a kernel matrix, the accelerated algorithm can run over $40\times$ faster.
The paper contains implementation details, theoretical guarantees,
experiments on benchmark data sets, and an application to computational chemistry.
\end{abstract}
	
\begin{keywords}
kernel method, low-rank approximation, Nyström approximation, Cholesky decomposition
\end{keywords}
	
\begin{AMS}
65F55, 65C99, 68T05
\end{AMS}
	
\section{Introduction} \label{sec:intro}

This paper treats a core task in computational linear algebra:
\boxedtext{
\flushleft
\textbf{Low-rank psd approximation:} Given a positive-semidefinite (psd) matrix $\mat{A} \in \complex^{N\times N}$ and a rank parameter $1\le k < N$, compute a matrix $\mat{F} \in \complex^{N\times k}$ such that $\mat{A} \approx \mat{F}\mat{F}^*$.
}
\noindent Randomly pivoted Cholesky (\RPCholesky) is a randomized variant of the pivoted partial Cholesky method that stands among the best algorithms for constructing a rank-$k$ psd approximation from a set of $k$ columns~\cite{CETW23}.
The present work introduces \textbf{accelerated} \RPCholesky, a new algorithm that runs up to $40\times$ faster than \RPCholesky, while producing approximations with the same quality.  The accelerated method simulates the behavior of the simpler method by means of block-matrix computations and rejection sampling.
Its benefits are most pronounced when the matrix dimension $N$ and the approximation rank $k$ are both moderately large (say, $N \geq 10^5$ and $k \geq 10^3$), a regime where existing algorithms struggle.

\subsection{Access models for matrices}

When designing an algorithm for low-rank approximation of a psd matrix $\mat{A}$,
one must consider how the algorithm interacts with the matrix.
For example, in the \emph{matvec access model}, the primitive operation is
the matrix--vector product $\vec{v} \mapsto \mat{A} \vec{v}$,
and the algorithm designer aims to construct the low-rank approximation
with the minimum number of matrix--vector products~\cite{TW23,Woo14a,MDM+22}.
The matvec access model can arise from the discretization
of differential and integral operators, among other applications.

In the \textit{entry access model}, the primitive operation is
the evaluation of the matrix entry $\mat{A}(i,j)$
for any pair $(i, j)$ of row and column indices.
The papers~\cite{WS00,FS01,DM05,MM17,CETW23,rudi2018fast} contain efficient
algorithms for low-rank psd approximation in the entry access model.
These methods are appealing because they only access and manipulate
a small fraction of the entries in the target matrix.

The entry access model does not always provide a satisfactory abstraction
for scientific computing and machine learning because it fails to account for computational efficiencies observed in practice by accessing the matrix in a block-wise fashion.
To remedy this deficiency, this paper works in a paradigm that will be called the \emph{submatrix access model}.
The primitive operation
is extracting a submatrix:
\begin{equation} \label{eq:submatrix-access}
    \mat{A}(\set{S}_1, \set{S}_2) = [\mat{A}(i,j)]_{i \in \set{S}_1, j \in \set{S}_2} \quad \text{for } \set{S}_1, \set{S}_2 \subseteq \{1, \ldots, N\}.
\end{equation}
This access model describes the setting
where the cost of generating a moderately large submatrix
is comparable with the cost of evaluating a single matrix entry.
As the next subsection explains, the submatrix access model reflects the computational
challenge that arises from kernel methods in machine learning.

\subsection{Kernel matrices and the submatrix access model} \label{sec:kernels-submatrix}

Recall that a positive-definite kernel function $\kappa:\real^d \times \real^d \to \real$
can be used to define a similarity metric between pairs of data points~\cite{SS02}.
Popular kernel functions include the Gaussian and $\ell_1$ Laplace kernels:
\begin{align}
    \kappa(\vec{x},\vec{x}') &= \exp \Bigl( - \tfrac{1}{2\sigma^2} \norm{\vec{x} - \vec{x}'}^2 \Bigr),& &\text{\textcolor{gray}{[Gaussian]}} \label{eq:gaussian} \\
    \kappa(\vec{x},\vec{x}') &= \exp \Bigl( - \tfrac{1}{\sigma}\sum\nolimits_{i=1}^d |\vec{x}(i) - \vec{x}'(i)|\Bigr). &&\text{\textcolor{gray}{[$\ell_1$ Laplace]}} \label{eq:l1_laplace}
\end{align}
Here, $\|\cdot\|$ is the $\ell_2$ norm and $\sigma >0$ is the bandwidth, which sets the scale of interactions.
Given data points $\vec{x}_1, \dots, \vec{x}_N \in \real^d$,
the kernel matrix $\mat{A} = [\kappa(\vec{x}_i, \vec{x}_j)]_{1 \leq i,j\leq N}$
is a psd matrix that tabulates the similarity between each pair of points.
Kernel methods perform dense linear algebra with the kernel matrix
to accomplish machine learning tasks, such as regression and clustering.

On modern computers, the runtime of numerical algorithms is often dominated by time moving data between cache and memory rather than time performing arithmetic operations \cite[pp.~13, 581--582]{DFF+03}.
Each time columns of a kernel matrix are generated, whether a single column or a large block of $b\gg1$ columns, all $N$ data points $\{\vec{x}_i\}_{i=1}^N$ must be moved from memory to cache.
When data movement dominates the runtime, generating columns in blocks of size $b$ can be up to $b \times$ as fast as generating columns one-by-one.
See \cref{sec:why-blocking} for discussion of other reasons why blocking can increase the speed of matrix algorithms.

\begin{figure}[t]
    \centering

    \begin{subfigure}{0.48\textwidth}
        \centering\hspace{.5cm}Gaussian
        
        \vspace{0.5em}
        \includegraphics[width = \textwidth]{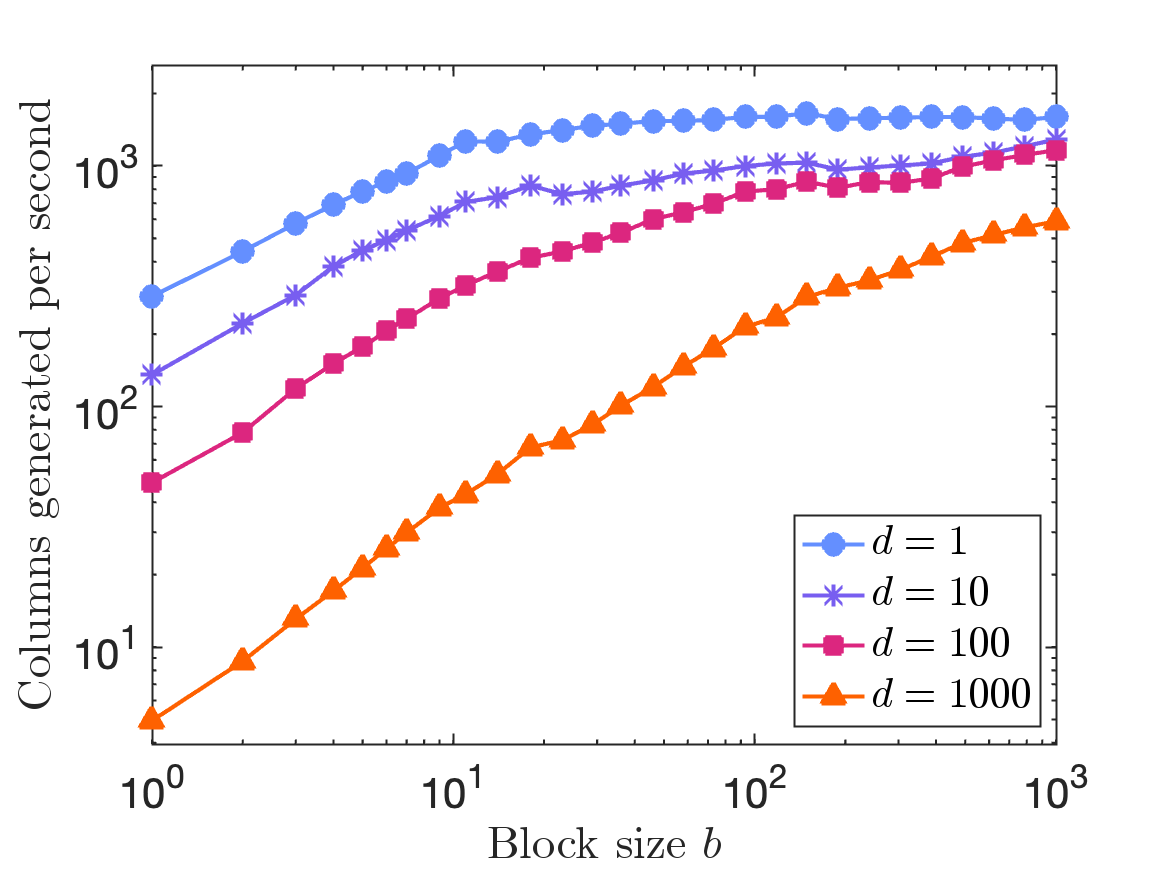}
    \end{subfigure}
    \begin{subfigure}{0.48\textwidth}
        \centering
        \hspace{.5cm}$\ell_1$ Laplace
        
        \vspace{0.5em}
        \includegraphics[width = \textwidth]{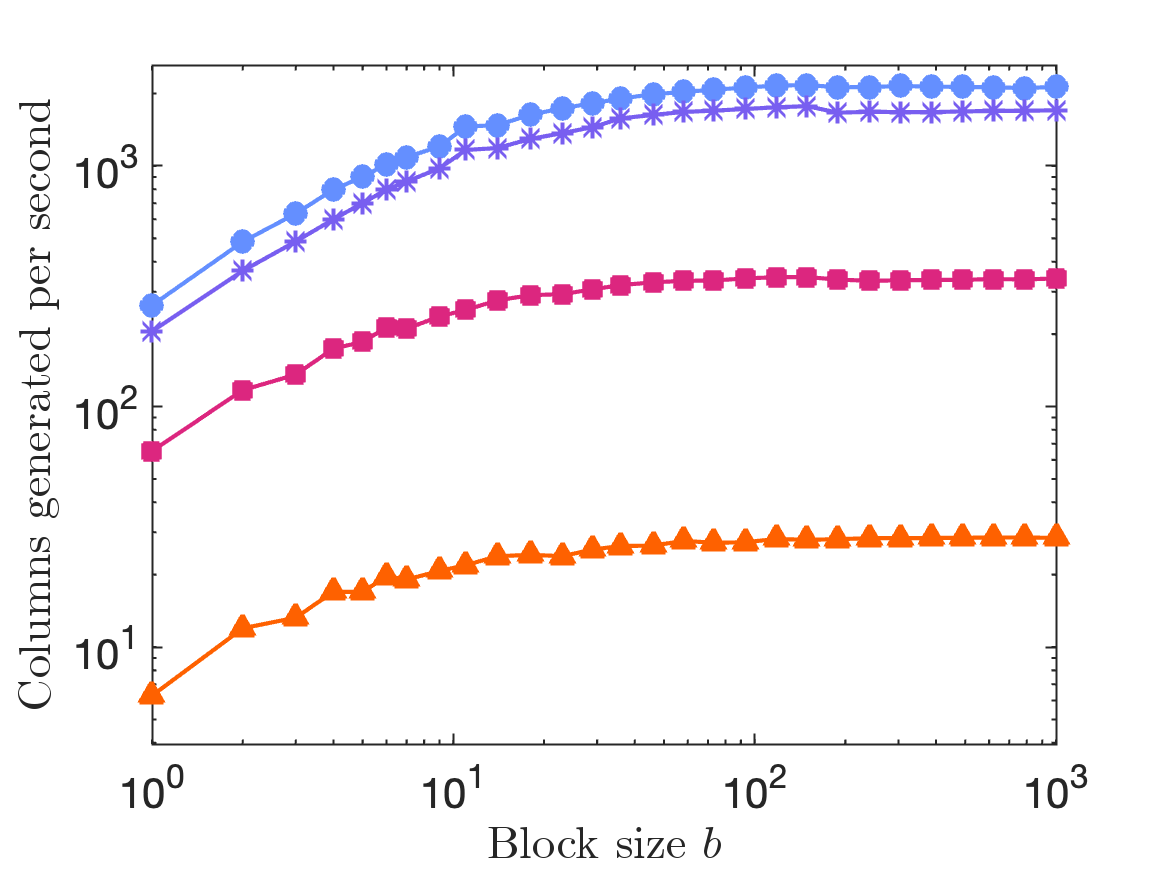}
    \end{subfigure}

    \caption{Columns generated per second for Gaussian (left) and $\ell_1$ Laplace (right) kernel matrices with bandwidth $\sigma = 1$.
    The data consists of $N=10^5$ standard Gaussian points with dimension $d \in \{ 1, 10, 100, 1000 \}$.}
    \label{fig:generation}
\end{figure}

\Cref{fig:generation} provides further empirical evidence that the submatrix access model
is the appropriate abstraction for kernel matrix computations.
The figure charts the number of columns formed per second when evaluating blocks of $b$ columns in a $10^5 \times 10^5$ kernel matrix, with varying block sizes $1 \le b \le 10^3$.
The per-column efficiency increases with $b$, and it saturates once the block size $b$ is large enough to maximally reap the benefits of block-wise access.
This critical block size and the maximum speedup is sensitive both to the dimension $d$ of the data and the choice of kernel.
In the most extreme case (Gaussian kernel with $d=1000$), generating columns in blocks of size $b = 1000$ is $100\times$ as efficient as generating columns one at a time.

For moderately large data sets (say, $N \ge 10^5$), it is prohibitively
expensive to generate and store the full kernel matrix.
Fortunately, the kernel matrix often has rapidly decaying eigenvalues,
so it can be approximated by a low-rank matrix constructed from a subset of the columns.
When properly implemented with block-wise access, this approximation makes it possible to scale kernel methods to billions of data points~\cite{MCRR20}.
The resulting machine learning methods can achieve cutting-edge
speed and interpretability for tasks in scientific machine learning~\cite{aristoff2024fast}.

\subsection{Efficient low-rank approximation in the submatrix access model}

Given the large speedups observed in \cref{fig:generation},
it is natural to ask a question:
\boxedtext{
\flushleft
What algorithm for low-rank psd approximation is most efficient in the submatrix access model?
}
\noindent This paper answers the question by proposing the accelerated \RPCholesky algorithm.
To contextualize this new method, the rest of this subsection reviews simple \RPCholesky (\cref{sec:rpcholesky}) and block \RPCholesky (\cref{sec:block-rpcholesky}).
Last,
\cref{sec:accelerated_intro} takes a first look at accelerated \RPCholesky.

\subsubsection{Simple \RPCholesky} \label{sec:rpcholesky}
Randomly pivoted Cholesky (\RPCholesky) is an algorithm for low-rank psd approximation that is designed for the entry access model~\cite{CETW23}.
After extracting the diagonal of the matrix, the algorithm forms a rank-$k$ approximation
by adaptively evaluating and manipulating $k$ columns of the matrix.

Starting with the trivial approximation $\vphantom{\Ahat^{(0)}}\Ahat^{(0)} \coloneqq \mat{0}$ and the initial residual $\mat{A}^{(0)}\coloneqq \mat{A}$, \RPCholesky builds up a rank-$k$ approximation $\smash{\Ahat}^{(k)}$ by taking the following steps. 
For $i=0,\ldots,k-1$,
\begin{enumerate}
    \item \textbf{Choose a random pivot.} Select a random pivot $s_{i+1} \in \{1,\ldots,N\}$ according to the probability distribution
    \begin{equation}
    \label{eq:first_step}
        \prob\{s_{i+1} = j\} = \frac{\mat{A}^{(i)}(j,j)}{\tr \mat{A}^{(i)}}.
    \end{equation}
    \item \textbf{Update.} Evaluate the column $\mat{A}^{(i)}(:, s_{i+1})$ of the residual indexed by the selected pivot $s_{i+1}$.  Update the approximation and the residual:
    \begin{align*}
        \smash{\Ahat}^{(i+1)}
        &\coloneqq \Ahat^{(i)}
        + \frac{\mat{A}^{(i)}(:,s_{i+1})\mat{A}^{(i)}(s_{i+1},:)}{\mat{A}^{(i)}(s_{i+1},s_{i+1})}; \\
        \mat{A}^{(i+1)}
        &\coloneqq \mat{A}^{(i)}
        - \frac{\mat{A}^{(i)}(:,s_{i+1})\mat{A}^{(i)}(s_{i+1},:)}{\mat{A}^{(i)}(s_{i+1},s_{i+1})}.
    \end{align*}
\end{enumerate}
In this algorithm and those that follow, ``pivot'' refers to the index of the selected column. Pivots are chosen by means of an adaptive, randomized selection rule.
Additionally, $\mat{A}(\cdots)$ indicates access of $\mat{A}$ by MATLAB-style indexing notation; see \cref{sec:notation} for a discussion of notation.

With an optimized implementation (\cref{alg:rpcholesky}), \RPCholesky produces the approximation $\Ahat = \smash{\Ahat}^{(k)}$ in factored form after $\order(k^2N)$ arithmetic operations.
The history of \RPCholesky and related algorithms \cite{CETW23,DRVW06,DV06,MW17} is surveyed in \cite[sec.~3]{CETW23}.

\subsubsection{Block \RPCholesky}\label{sec:block-rpcholesky}

\RPCholesky is less efficient than it could be because the algorithm evaluates columns one by one,
failing to take advantage of the submatrix access model.
For this reason, large-scale machine learning applications \cite{DEF+23,aristoff2024fast} often employ the block \RPCholesky algorithm \cite{CETW23}, which is faster.
This method draws $b > 1$ random pivots with replacement according to the distribution \cref{eq:first_step}.
Then it updates the approximation and residual using the $b$ pivots.
For target rank $k$, the block size $b = k/10$ is often used in applications.
Pseudocode for block \RPCholesky is available in \cref{alg:block_rpc} in the appendix.

This paper resolves several open empirical and theoretical questions about the performance
of block \RPCholesky.
First, new experiments confirm
that simple \RPCholesky and block \RPCholesky provide similar
approximation accuracy on typical problems (\cref{fig:performance}), but there are challenging examples
where the simple method is over $100\times$ more
accurate than the block method (\cref{fig:initial,fig:performance}).
Second, new error bounds guarantee the eventual success of
block \RPCholesky for
any block size $b$ (\cref{thm:main_bound}).
However, the bounds show that the
efficiency of the block method can degrade when the target matrix has rapid
eigenvalue decay.

\subsubsection{Accelerated \RPCholesky} \label{sec:accelerated_intro}

The main contribution of this paper is a new accelerated \RPCholesky algorithm,
which combines the best features of the simple and block methods.
Accelerated \RPCholesky produces the same distribution of random pivots as simple \RPCholesky and enjoys the same accuracy guarantees, while taking advantage of the submatrix access model
to improve the runtime.

At each step, accelerated \RPCholesky proposes a block of $b$ random pivots.
Then the algorithm thins the list of proposals using rejection sampling,
and it updates the matrix approximation using the accepted pivots.
As compared with block \RPCholesky, the new feature of accelerated \RPCholesky
is the rejection sampling step, which simulates the pivot distribution of simple \RPCholesky.

Here is a summary of the accelerated \RPCholesky method;
a comprehensive explanation appears in \cref{sec:accelerated}.
The user fixes a block size parameter $b \geq 1$.
Starting with the approximation $\smash{\Ahat}^{(0)}\coloneqq \mat{0}$ and the residual $\mat{A}^{(0)}\coloneqq \mat{A}$, accelerated \RPCholesky builds up a low-rank approximation $\smash{\Ahat}^{(t)}$ with rank at most $bt$ by means of the following procedure.
For $i=0,\ldots,t-1$,
\begin{enumerate}
    \item \textbf{Propose a block of random pivots.} Propose a block of independent and identically distributed random pivots $\{s_{bi+1}',\ldots,s_{b(i+1)}'\}$ with
    \begin{equation*}
        \prob\{s_\ell' = j\} = \frac{\mat{A}^{(i)}(j,j)}{\tr \mat{A}^{(i)}}.
    \end{equation*}
    \item \textbf{Rejection sampling.}
    Initialize the incremental pivot set $\set{S}_i \gets \emptyset$.
    For each $\ell = bi+1,\ldots,b(i+1)$, 
    accept the pivot $s_{\ell}'$ with probability
    \begin{equation*}
        p(s_\ell') = \frac{\mat{A}^{(i)}(s_\ell' ,s_\ell') - \mat{A}^{(i)}(s_\ell',\set{S}_i)\mat{A}^{(i)}(\set{S}_i,\set{S}_i)^\dagger \mat{A}^{(i)}(\set{S}_i,s_\ell')}{\mat{A}^{(i)}(s_\ell' ,s_\ell')},
    \end{equation*}
    and update $\set{S}_i \gets \set{S}_i \cup \{ s_\ell' \}$. Otherwise, reject the pivot and do nothing.
    \item \textbf{Update.} Update the approximation:
    \begin{align*}
    \smash{\Ahat}^{(i+1)}  &\coloneqq \smash{\Ahat}^{(i)}  + \mat{A}^{(i)}(:,\set{S}_i)\mat{A}^{(i)}(\set{S}_i,\set{S}_i)^\dagger \mat{A}^{(i)}(\set{S}_i,:); \\
    \mat{A}^{(i+1)}  &\coloneqq \mat{A}^{(i)}  - \mat{A}^{(i)}(:,\set{S}_i)\mat{A}^{(i)}(\set{S}_i,\set{S}_i)^\dagger \mat{A}^{(i)}(\set{S}_i,:).
    \end{align*}
    For numerical stability and efficiency, the pseudoinverse $\mat{A}^{(i)}(\set{S}_i,\set{S}_i)^\dagger$ should be computed and applied via a Cholesky decomposition $\mat{A}^{(i)}(\set{S}_i,\set{S}_i)$.
\end{enumerate}
Because of the possibility of rejections, this algorithm uses slightly more entry look-ups than simple \RPCholesky to generate a rank-$k$ approximation.
However, it is much faster in practice due to blockwise computations.
\Cref{alg:accelerated_rpc} provides an optimized implementation.

\subsection{Illustrative comparison}

\begin{figure}[t]
    \centering
    \includegraphics[width=0.48\textwidth]{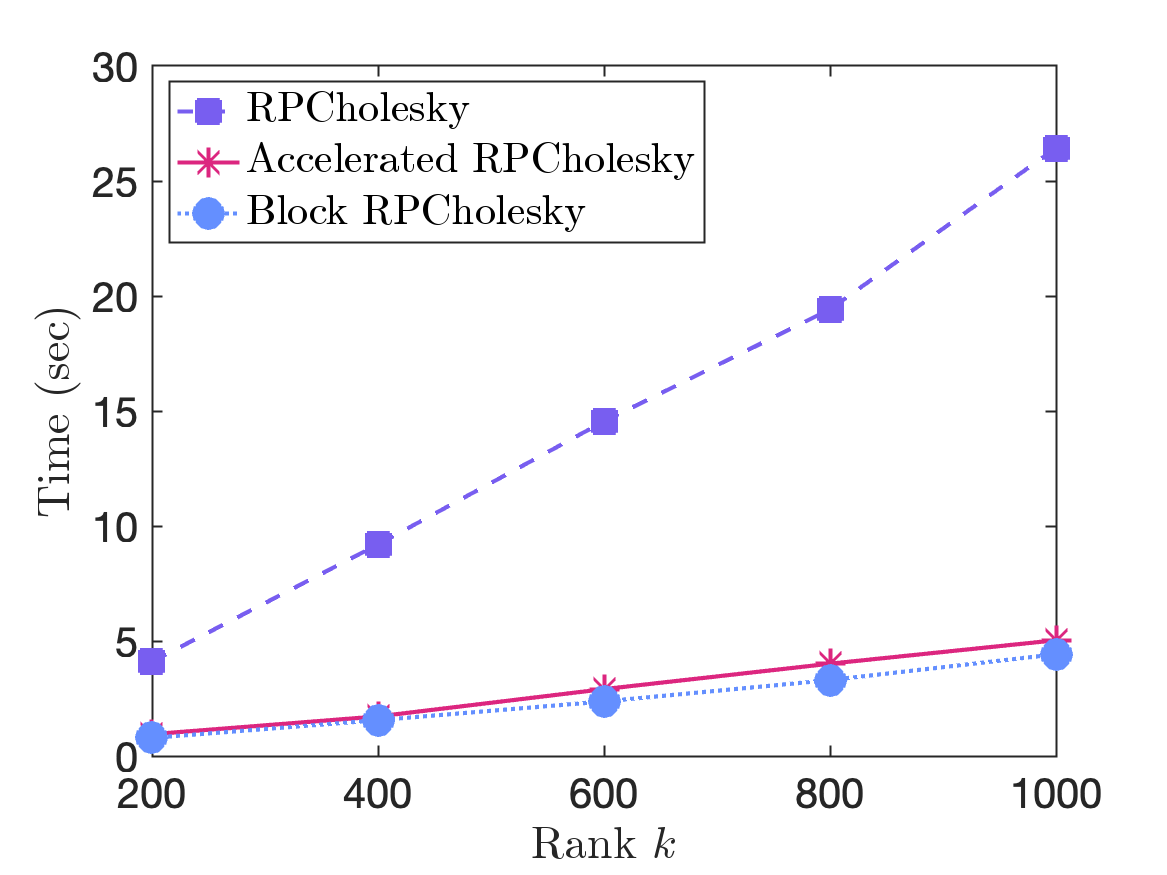}
    \includegraphics[width=0.48\textwidth]{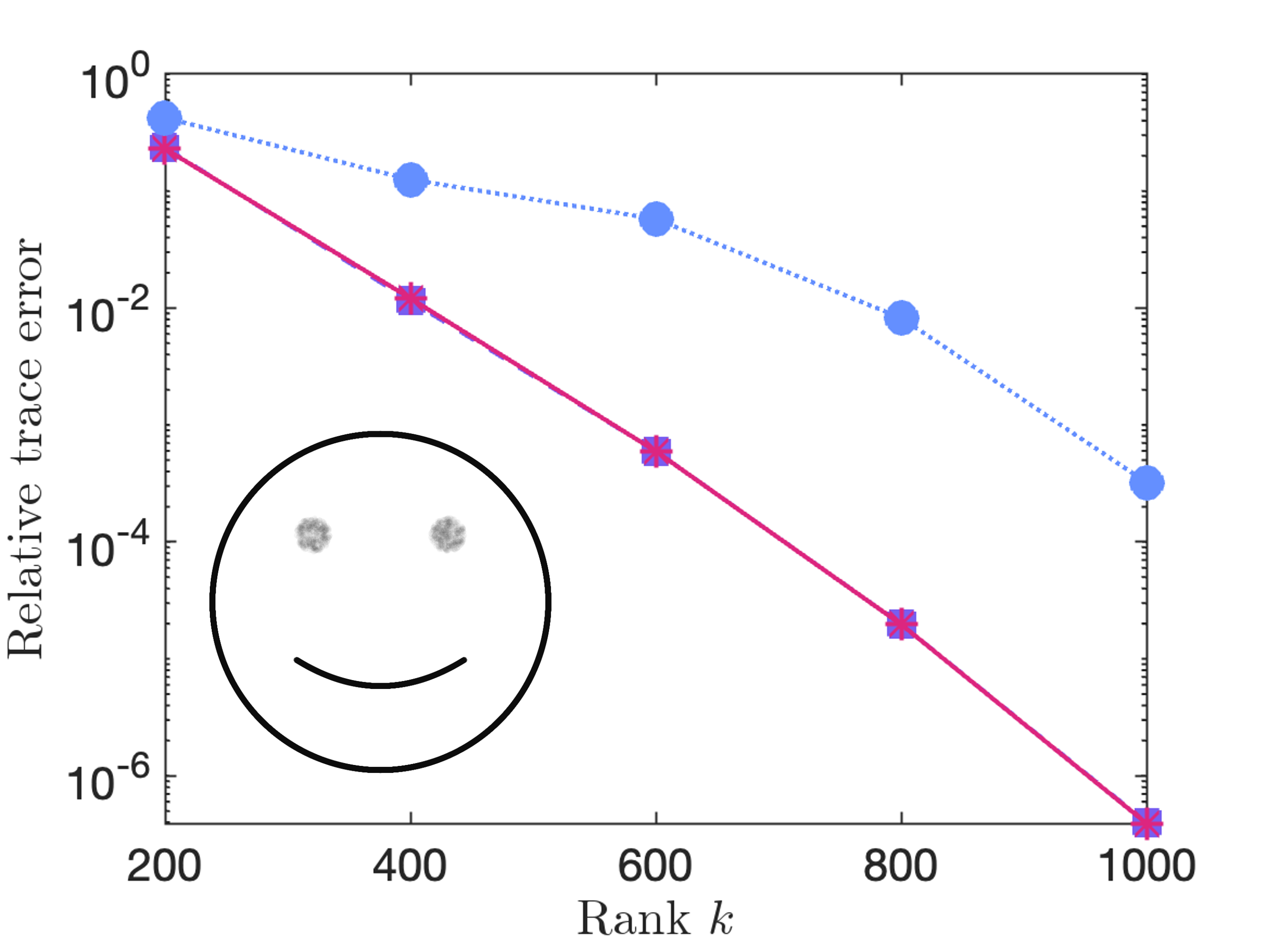}
    
    \caption{Runtime (left) and relative trace norm error \cref{eq:relative-trace-error} (right) of \RPCholesky, block \RPCholesky, and accelerated \RPCholesky applied to a Gaussian kernel matrix $\mat{A}$ associated with $10^5$ points in $\real^2$ forming a smile (inset, right).
    The blocked methods use block size $b=120$.}
    \label{fig:initial}
\end{figure}

\Cref{fig:initial} presents a stylized example where simple \RPCholesky, block \RPCholesky, and accelerated \RPCholesky are applied to a Gaussian kernel matrix, which is generated from $N = 10^5$ data points forming a smile in $\real^2$ of radius $10$.
The bandwidth is set to $\sigma = 0.2$.
This example is only intended to illustrate the differences between the \RPCholesky-type algorithms; \cref{sec:broad} provides more extensive comparisons on a testbed of synthetic data, random point clouds, and real-world data.

As shown in the left panel of \cref{fig:initial}, both block \RPCholesky and accelerated \RPCholesky are $4\times$ to $5\times$ faster than simple \RPCholesky on this example.
The right panel of \cref{fig:initial} shows the relative trace error 
\begin{equation} \label{eq:relative-trace-error}
    \frac{\tr(\mat{A}-\Ahat)}{\tr(\mat{A})}
\end{equation}
of the methods.
Accelerated \RPCholesky and simple \RPCholesky give the same accuracy, so the error curves coincide with each other.
Block \RPCholesky fares much worse, attaining $800\times$ higher error than the other methods at rank $k=1000$.
This example supports the basic takeaway message: \textbf{accelerated \RPCholesky produces the same distribution of random outputs as simple \RPCholesky while being much faster.}

\subsection{Outline}

This paper explores the theoretical and empirical properties of accelerated \RPCholesky and compares it to simple and block \RPCholesky.
\Cref{sec:accelerated} introduces accelerated \RPCholesky and discusses efficient implementations.
\Cref{sec:numerics} presents numerical experiments.
\Cref{sec:theory} analyzes accelerated \RPCholesky and block \RPCholesky.
\Cref{sec:qr} extends these methods and proposes an accelerated randomly pivoted \QR method.

\subsection{Notation} \label{sec:notation}
Entries and submatrices of $\mat{A}$ are expressed using MATLAB notation.
For example, $\mat{A}(:,i)$ represents the $i$th column of $\mat{A}$ and $\mat{A}(\set{S},:)$ denotes the submatrix of $\mat{A}$ with rows indexed by the set $\set{S}$.
The conjugate transpose of a matrix $\mat{F}$ is denoted as $\mat{F}^*$, and the Moore--Penrose pseudoinverse is $\mat{F}^{\dagger}$.
The inverse of the conjugate transpose of a nonsingular matrix is denoted $\mat{F}^{-*} \coloneqq (\mat{F}^{-1})^* = (\mat{F}^*)^{-1}$.
The matrix $\mat{\Pi}_{\mat{F}}$ is the orthogonal projector onto the column span of $\mat{F}$.
The function $\lambda_i(\mat{A})$ outputs the $i$th largest eigenvalue of a psd matrix $\mat{A}$.
The symbol $\preceq$ denotes the psd order on Hermitian matrices: $\mat{H} \preceq \mat{A}$ if and only if $\mat{A} - \mat{H}$ is psd.
A matrix $\mat{A}$ is described as ``rank-$r$'' when $\rank \mat{A} \leq r$.  
Last, $\norm{\cdot}$ denotes the vector $\ell_2$ norm.

\section{Faster \RPCholesky by rejection sampling} \label{sec:accelerated}

This section explains how \RPCholesky can be implemented efficiently with rejection sampling (\cref{sec:rejection,sec:rejection2}).
Afterward,
\cref{sec:accelerated_alg} introduces and provides pseudocode for accelerated \RPCholesky.
\Cref{sec:low-memory} explains how to implement accelerated \RPCholesky with more operations but lower memory requirements.

\subsection{Implementing \RPCholesky with rejection sampling} \label{sec:rejection}
Rejection sampling is a standard approach for sampling from a challenging probability distribution \cite[Sec.~2.2]{liu2004monte}.
This section shows how to use rejection sampling to sample the pivots $s_1,\ldots,s_k$ according to the \RPCholesky distribution.
Rejection sampling was originally applied to \RPCholesky in \cite{EM23}, building on related ideas from \cite{RO19}.

Consider the following setting.
Suppose that one has already sampled pivots $\set{S}_i = \{s_1,\ldots,s_i\}$, which define the rank-$i$ approximation $\Ahat^{(i)} = \mat{A}(:,\set{S}_i) \mat{A}(\set{S}_i,\set{S}_i)^\dagger \mat{A}(\set{S}_i,:)$ and the residual $\mat{A}^{(i)} = \mat{A} - \Ahat^{(i)}$.
In addition, suppose that one has access to an upper bound $\vec{u}$ on the diagonal of the residual $\mat{A}^{(i)}$:
\begin{equation*}
    \vec{u}(j) \ge \mat{A}^{(i)}(j,j) \quad \text{for each } j = 1, \ldots, N.
\end{equation*}
The diagonal of the residual decreases at each step of the \RPCholesky procedure: $\mat{A}^{(i)}(j,j) \le \mat{A}^{(i-1)}(j,j) \le \cdots \le \mat{A}(j,j)$ for each index $j$.  Thus, $\vec{u} = \diag(\mat{A})$ is one possible choice for the upper bound.

To select the next pivot $s_{i+1}$, rejection sampling executes the following loop:
\boxedtext{\textbf{Rejection sampling loop:}
\begin{enumerate}
    \item \textbf{Propose.} Draw a random pivot $s$ according to the \emph{proposal distribution} $\vec{u}$, given by $\prob \{ s = j \} =\vec{u}(j) / \sum_{k=1}^n \vec{u}(k)$.
    \item \textbf{Accept/reject.}
    With probability $\mat{A}^{(i)}(s,s) / \vec{u}(s)$, accept the pivot and set $s_{i+1} \gets s$.
    Otherwise, reject the pivot and return to step 1.
\end{enumerate}
}
\noindent Notice that steps 1 and 2 are repeated until a pivot $s_{i+1}$ is accepted.
Thus, the procedure generates a set $\set{S}_{i+1} \gets \set{S}_i \cup \{s_{i+1}\}$ of $i+1$ pivots, defining a rank-$(i+1)$ approximation $\Ahat^{(i+1)}$.
Continuing in this way, one can sample pivots $s_{i+2},s_{i+3},\ldots$ and construct a low-rank approximation of the desired rank $k$.

The advantage of rejection sampling over the standard \RPCholesky implementation (\cref{alg:rpcholesky}) is that rejection sampling only evaluates the diagonal entries $\diag \mat{A}^{(i)}(s, s)$ at the proposed pivot indices $s$,
whereas \cref{alg:rpcholesky} evaluates all the entries of $\diag \mat{A}^{(i)}$ at every step.
Nonetheless, the efficiency of rejection sampling depends on the typical size of the acceptance probability $\mat{A}^{(i)}(s,s)/\vec{u}(s)$.
A small acceptance probability leads to many executions of the loop 1--2 and thus a slow algorithm.
Accelerated \RPCholesky avoids this issue by periodically updating $\vec{u}$ to keep the acceptance rate from becoming too low.

\subsection{Efficient rejection sampling in the submatrix access model} \label{sec:rejection2}
This section describes how to implement \RPCholesky with rejection sampling
in the submatrix access model.
The resulting procedure, called \RejectChol, serves as a subroutine in the full accelerated \RPCholesky algorithm,
which is developed in \cref{sec:accelerated_alg}.

\RejectChol works as follows.
Assume the proposal distribution $\vec{u} \coloneqq \diag \mat{A}$,
and assume the initial pivot set is empty: $\set{S}_0 = \emptyset$.
To take advantage of the submatrix access model, begin by sampling a block
$\set{S}' = \{ s_1', \dots, s_b'\}$ of $b \geq 1$  proposals
independently from the distribution
\begin{equation*}
    \mathbb{P}\{s' = j\} = \frac{\mat{A}(j,j)}{\tr(\mat{A})} \quad \text{for } j=1,\ldots,N.
\end{equation*}
For efficiency,
generate the entire submatrix $\mat{A}(\set{S}', \set{S}')$.
Finally, perform $b$ 
pivot accept or reject steps with probabilities given by the diagonal submatrix elements
to obtain a set $\set{S} \subseteq \set{S}'$ of accepted pivots.
Each time a pivot is accepted, the procedure performs a Cholesky step to
eliminate the accepted column from the submatrix and to 
update the acceptance probabilities.
Pseudocode for \RejectChol is given in \cref{alg:rejection_rpc}.
\begin{algorithm}[t]
    \caption{\RejectChol subroutine} \label{alg:rejection_rpc}
    \begin{algorithmic}
    \Require Psd matrix $\mat{H} \in \mathbb{C}^{b\times b}$ and proposed pivots $\set{S}' = \{ s_1',\ldots,s_b' \}$
    \Ensure Accepted pivots $\set{S} \subseteq \set{S}'$; Cholesky factor $\mat{L}$
    \State $\mat{L} \gets \mat{0}_{b\times b}$, $\set{T} \gets \emptyset$ 
    \State $\vec{u} \gets \diag (\mat{H})$
    \For{$i = 1,\ldots,b$}
        \If{$u_i \cdot \Call{Rand}{\,} < h_{ii}$} \Comment{Accept or reject}
            \State $\set{T} \gets \set{T} \cup \{i\}$ \Comment{If accept, induct pivot}
            \State $\mat{L}(i:b,i) \gets \mat{H}(i:b,i)/\sqrt{\mat{H}(i,i)}$ \Comment{Compute Cholesky factor}
            \State $\mat{H}(i+1:b,i+1:b) \gets \mat{H}(i+1:b,i+1:b) - \mat{L}(i+1:b,i) \mat{L}(i+1:b,i) ^*$ 
        \EndIf
    \EndFor
    \State $\mat{L} \gets \mat{L}(\set{T}, \set{T})$ \Comment{Delete rejected rows and columns}
    \State $\set{S} \gets \{ s_i' : i \in \set{T} \}$ \Comment{Get accepted pivots}
    \end{algorithmic}
\end{algorithm}
As inputs, it takes the proposed pivots $\set{S}'$ and the submatrix $\mat{H} \coloneqq \mat{A}(\set{S}', \set{S}')$, and it returns the set $\set{S} \subseteq \set{S}'$ of accepted pivots and a Cholesky factorization $\mat{A}(\set{S},\set{S}) = \mat{L}\mat{L}^*$.

In contrast to the procedure in the previous section, \RejectChol only makes $b$ proposals in total,
and the set $\set{S} \subseteq \set{S}'$ of accepted pivots has random cardinality $1 \leq |\set{S}| \leq b$.
The first pivot $s_1'$ is always accepted because the proposal distribution $\vec{u} = \diag(\mat{A})$.
Conditional on the number of acceptances $|\set{S}|$, the accepted pivots $s_1,\ldots,s_{|\set{S}|}$ have the same distribution as $|\set{S}|$ pivots selected by the standard \RPCholesky algorithm (\cref{alg:rpcholesky}) applied to the full matrix $\mat{A}$.

\subsection{Accelerated \RPCholesky} \label{sec:accelerated_alg}
The shortcoming of the block rejection sampling approach in the previous section is that the acceptance rate declines as the diagonal of the residual matrix decreases. 
Accelerated \RPCholesky remedies this weakness by employing a multi-round sampling procedure.  At the beginning of each round, the sampling distribution $\vec{u} \gets \diag(\mat{A} - \Ahat)$ is updated using the current low-rank approximation $\Ahat$.

\begin{algorithm}[t]
    \caption{Accelerated \RPCholesky: Standard implementation} \label{alg:accelerated_rpc}
    \begin{algorithmic}
        \Require Psd matrix $\mat{A} \in \mathbb{C}^{N\times N}$; block size $b$; number of rounds $t$
        \Ensure Matrix $\mat{F} \in \mathbb{C}^{N \times k}$ defining low-rank approximation $\Ahat = \mat{F}\mat{F}^*$; pivot set $\set{S}$ 
        \State Initialize $\mat{F} \leftarrow \mat{0}_{N \times 0}$,
        $\set{S} \leftarrow \emptyset$, and
        $\vec{u} \leftarrow \diag \mat{A}$
        \For{$i = 0$ to $t - 1$}
            \State \underline{\textit{Step 1: Propose a block of pivots}} \vspace{0.5em}
            \State Sample $s_{ib+1}', \ldots, s_{(i+1)b}' \stackrel{\rm iid}{\sim} \vec{u}$ and set $\set{S}_i' \gets \{s_{ib+1}', \ldots, s_{(i+1)b}'\}$
            \State 
            \State \underline{\textit{Step 2: Rejection sampling}} \vspace{0.5em}
            \State $\mat{H} \gets \mat{A}(\set{S}_i', \set{S}_i') - \mat{F}(\set{S}_i',:) \mat{F}(\set{S}_i',:)^*$
            \State $\set{S}_i ,\mat{L}\gets \Call{\RejectChol}{\set{S}_i',\mat{H}}$
            \State $\set{S} \gets \set{S} \cup \set{S}_i$ \Comment{Update pivots} 
            \State 
            \State \underline{\textit{Step 3: Update low-rank approximation and proposal distribution}} \vspace{0.5em}
            \State $\mat{G} \gets (\mat{A}(:,\set{S}_i) - \mat{F} \mat{F}(\set{S}_i,:)^*) \mat{L}^{-*}$
            \State $\mat{F}\gets \onebytwo{\mat{F}}{\mat{G}}$
            \State $\vec{u} \gets \vec{u} - \Call{SquaredRowNorms}{\mat{G}}$ \Comment{Update diagonal}
            \State $\vec{u} \gets \max \{\vec{u},\vec{0}\}$ \Comment{Helpful in floating point arithmetic}
        \EndFor
    \end{algorithmic}
\end{algorithm}

In detail, accelerated \RPCholesky works as follows.
Fix a number of rounds $t\ge 1$ and block size $b\ge 1$, and initialize $\Ahat \gets \mat{0}$ and $\vec{u} \gets \diag(\mat{A})$.
For $i = 0,1,\ldots,t-1$, perform the following three steps:
\begin{enumerate}
    \item \textbf{Propose a block of pivots.} Sample $b$ pivots independently with replacement according to the distribution $\prob \{ s = j \} = \vec{u}(j) / \sum_{k=1}^n \vec{u}(k)$. 
    \item \textbf{Rejection sampling.} Use \RejectChol (\cref{alg:rejection_rpc}) to downsample the proposed pivots.
    \item \textbf{Update.} Update the low-rank approximation $\Ahat$ and the diagonal vector $\vec{u}\gets \diag(\mat{A} - \Ahat)$ to reflect the newly accepted pivots.
\end{enumerate}
Optimized pseudocode for accelerated \RPCholesky is given in \cref{alg:accelerated_rpc}.
As output, \cref{alg:accelerated_rpc} returns an approximation $\Ahat \approx \mat{A}$ of rank $t\le k \leq b t$, where $k$ is the total number of accepted pivots.
The rank $k$ is at least $t$ since the first pivot of each round is always accepted.
For convenience and computational efficiency, $\Ahat$ is reported in factored form $\Ahat = \mat{F}\mat{F}^*$.

The computational timing of \cref{alg:accelerated_rpc} can be summarized as follows.
Step 1 makes a negligible contribution to the overall timing, requiring $\order(N + b)$ operations.
Step 2 is also relatively fast, 
since it requires just $\order((b+k) b^2)$ operations, independent of the dimension $N$ of the matrix.
Step 3 is typically the slowest, as it generates and processes a large block of $|\set{S}_i| \leq b$ columns in $\mathcal{O}(k |\set{S}_i| N)$ operations.
However, the user should take care that the acceptance rate does not fall below the level $\order(b / N)$ because then step 2 also becomes a computational bottleneck.

\subsection{Low-memory implementation} \label{sec:low-memory}

As a limitation, \Cref{alg:accelerated_rpc} requires storing an $N \times k$ factor matrix, which is prohibitively expensive when the approximation rank and the matrix dimensions are very large, say, $k \geq 10^4$ and $N \geq 10^6$.
For a problem of this particular size, storing the factor matrix requires $\geq 80{\rm GB}$ of memory in double-precision floating-point format, which exceeds the available memory on many machines.

\begin{algorithm}[t]
\caption{Accelerated \RPCholesky: Low-memory implementation} \label{alg:accelerated_rpc_low_memory}
\begin{algorithmic}
    \Require Psd matrix $\mat{A} \in \mathbb{C}^{N\times N}$; block size $b$; number of rounds $t$
    \Ensure Pivot set $\set{S}$ defining low-rank approximation $\Ahat = \mat{A}(:,\set{S})\mat{A}(\set{S},\set{S})^\dagger \mat{A}(\set{S},:)$; Cholesky factor $\mat{L}$ with $\mat{A}(\set{S}, \set{S}) = \mat{L} \mat{L}^*$
    \State Initialize $k\gets t\cdot b$, $\mat{L} \leftarrow \mat{0}_{0 \times 0}$,
    $\set{S} \leftarrow \emptyset$, and
    $\vec{u} \leftarrow \diag \mat{A}$
    \For{$i = 0$ to $t - 1$}
        \State \underline{\textit{Step 1: Propose a block of pivots}} \vspace{0.5em}
        \State Sample $s_{ib+1}', \ldots, s_{(i+1)b}' \stackrel{\rm iid}{\sim} \vec{u}$ and set $\set{S}_i' \gets \{s_{ib+1}', \ldots, s_{(i+1)b}'\}$
        \State
        \State \underline{\textit{Step 2: Rejection sampling}} \vspace{0.5em}
        \State $\mat{H} \gets \mat{A}(\set{S}_i', \set{S}_i') - \mat{A}(\set{S}_i',\set{S})\mat{L}^{-*} \mat{L}^{-1} \mat{A}(\set{S}_i',\set{S})$
        \State $\set{S}_i, \mat{L}_i\gets \Call{\RejectChol}{\mat{H},\set{S}_i'}$
        \State $\set{S} \gets \set{S} \cup \set{S}_i$ \Comment{Update pivots}
        \State 
        \State \underline{\textit{Step 3: Update low-rank approximation and proposal distribution}} \vspace{0.5em}
        \For{$j = 0$ to $t-1$} \Comment{Update diagonal, $k$ entries at a time}
            \State $\set{R} \gets \{jb+1,\ldots,(j+1)b\}$ \Comment{Chunk of $b$ rows}
            \State Evaluate the submatrix $\mat{A}(\set{R},\set{S})$
            \State $\mat{G} \gets (\mat{A}(\set{R},\set{S}_i) - \mat{A}(\set{R},\set{S}\setminus\set{S}_i) \mat{L}^{-*} \mat{L}^{-1} \mat{A}(\set{S}\setminus\set{S}_i,\set{S}_i)) \mat{L}_i^{-*} $
            \State $\vec{u}(\set{R}) \gets \vec{u}(\set{R}) - \Call{SquaredRowNorms}{\mat{G}}$
        \EndFor
        \State $\mat{L} \gets \twobytwo{\mat{L}}{\mat{0}}{\mat{A}(\set{S}_i,\set{S}\setminus\set{S}_i)\mat{L}^{-*}}{\mat{L}_i}$ 
        \Comment{Update Cholesky factor}
        \State $\vec{u} \gets \max \{\vec{u},\vec{0}\}$ \Comment{Ensure $\vec{u}$ remains nonnegative} 
    \EndFor
\end{algorithmic}
\end{algorithm}

\Cref{alg:accelerated_rpc_low_memory} addresses the memory bottleneck with an alternative implementation of accelerated \RPCholesky that only requires $\mathcal{O}(N + k^2)$ storage and repeatedly regenerates entries of $\mat{A}$ on an as-needed basis.
The method produces the same distribution of pivots and acceptance probabilities as \cref{alg:accelerated_rpc}, and the computational cost is still $\mathcal{O}(N k^2)$ operations.
However, the number of kernel matrix evaluations increases from $\mathcal{O}(N k)$ to $\mathcal{O}(N k^2)$ because the kernel matrix entries are regenerated when they are needed.

The low-memory implementation in \cref{alg:accelerated_rpc_low_memory} does not explicitly store an $N \times k$ factor matrix.
Rather, it maintains a Cholesky factorization $\mat{A}(\set{S},\set{S}) = \mat{L} \mat{L}^* \in \mathbb{C}^{k \times k}$ that implicitly defines the low-rank approximation:
\begin{equation}
    \Ahat = \mat{A}(:,\set{S})\mat{A}(\set{S},\set{S})^{-1} \mat{A}(\set{S},:) = \mat{A}(:,\set{S}) \mat{L}^{-*}\mat{L}^{-1} \mat{A}(\set{S},:).
\end{equation}
The submatrix $\mat{A}(:,\set{S})$ is not stored, and it is regenerated as needed.
The implicit low-rank approximation $\mat{\hat{A}}$ can be accessed via matrix--vector products, and it can be employed, for example, to speed up kernel machine learning algorithms \cite{DEF+23}.

\subsection{Related work: Robust blockwise random pivoting}

In recent work \cite{DCMP23}, Dong, Chen, Martinsson, and Pearce introduced \emph{robust blockwise random pivoting}, a variant of block randomly pivoted \QR for rectangular matrix low-rank approximation; this method can also be extended to provide a partial pivoted Cholesky decomposition for psd low-rank approximation.
Similar to accelerated \RPCholesky, RBRP works by first sampling a block of random pivots and downsampling them to a smaller set of representatives.
A comparison of our method with their approach appears in \cref{sec:rbrp}.

\section{Numerical results} \label{sec:numerics}

This section presents numerical experiments highlighting the speed and accuracy of accelerated \RPCholesky, both for a testbed of 125 kernel matrices (\cref{sec:broad})
and for 8 potential energy calculations from molecular chemistry (\cref{sec:case}).
The code to replicate the experiments can be found in the \texttt{block\_experiments} folder of \url{https://github.com/eepperly/Randomly-Pivoted-Cholesky}.
In all these experiments, a target rank $k$ is set and block \RPCholesky and accelerated \RPCholesky are run for as many rounds $t$ as needed to select $k$ pivots.
The full-memory implementation of accelerated \RPCholesky (\cref{alg:accelerated_rpc}) is used in all experiments.

\subsection{Testbed of 125 kernel matrices} \label{sec:broad}

The first round of experiments evaluates the accuracy and speed of accelerated \RPCholesky using over 100 kernel matrices, although some matrices were constructed from the same data with different kernel functions $\kappa$.
The target matrices were generated from the following synthetic, random, and real-world data sets:
\begin{itemize}
    \item \textbf{Challenging synthetic examples.} Three synthetic data were constructed to provide challenging examples of outliers and imbalanced classes of data points.
    The first two data sets depict a smile and a spiral in $\real^2$, scaled to three different length scales.
    The third data set represents a Gaussian point cloud in $\real^{20}$, with 50 or 500 or 5000 points perturbed to become outliers.
    These synthetic data sets include $N=10^5$ points, and the kernel function is Gaussian with bandwidth $\sigma = 1$.
    \item \textbf{Random point clouds.}
    Four random data sets were simulated, containing $N=10^5$ standard Gaussian data points in dimension $d \in \{ 2,10,100,1000\}$.
    The kernel function is Gaussian, Mat\'ern-$3/2$, or $\ell_1$-Laplace with bandwidth $\sigma \in \{ \sqrt{d}/2, \sqrt{d}, 2 \sqrt{d} \}$.
    \item \textbf{Real-world data.} 28 real-world data sets were downloaded from LIBSVM, OpenML, and UCI.
    They include the 20 examples from \cite[Tab.~1]{DEF+23}, supplemented with \texttt{a9a}, \texttt{cadata}, \texttt{Click\_prediction\_small}, \texttt{phishing}, \texttt{MiniBoonNE}, \texttt{santander}, \texttt{skin\_nonskin}, and \texttt{SUSY}.
    These data sets range in size from $N=4.3\times 10^4$ to $N = 1.3\times 10^6$ points, but the larger data sets were randomly subsampled to a maximum size of $N = 10^5$ points.
    The kernel function is either 
    Gaussian, Mat\'ern-$3/2$, or $\ell_1$-Laplace.
    The bandwidth is the median pairwise distance from a random subsample of $10^3$ points.
\end{itemize}
For each kernel matrix, the approximation rank is $k = 1000$ and the block size is $b=150$.
A small number of examples were discarded which possess a rank-$k$ approximation of relative trace error $<10^{-13}$; after this filtering process, there were 125 examples.

\begin{figure}[t]
    \centering
    \includegraphics[width=0.9\textwidth]{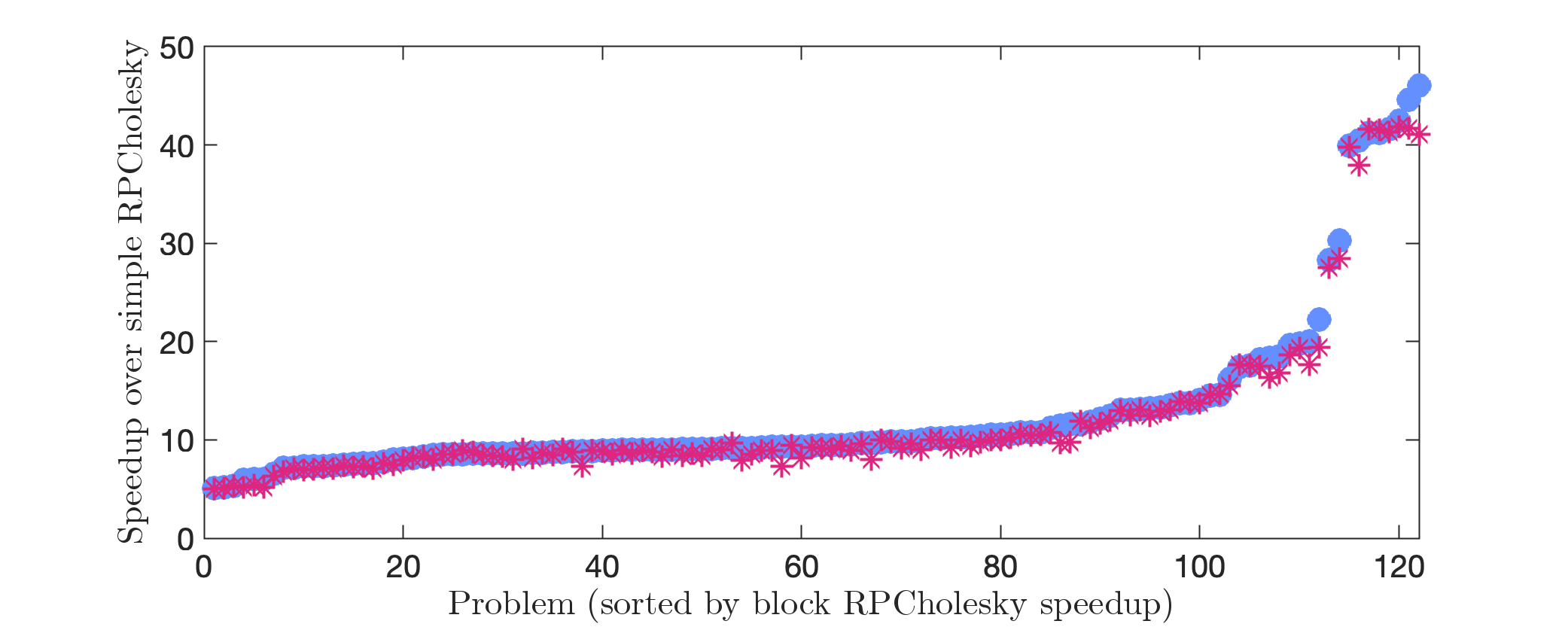}

    \includegraphics[width=0.9\textwidth]{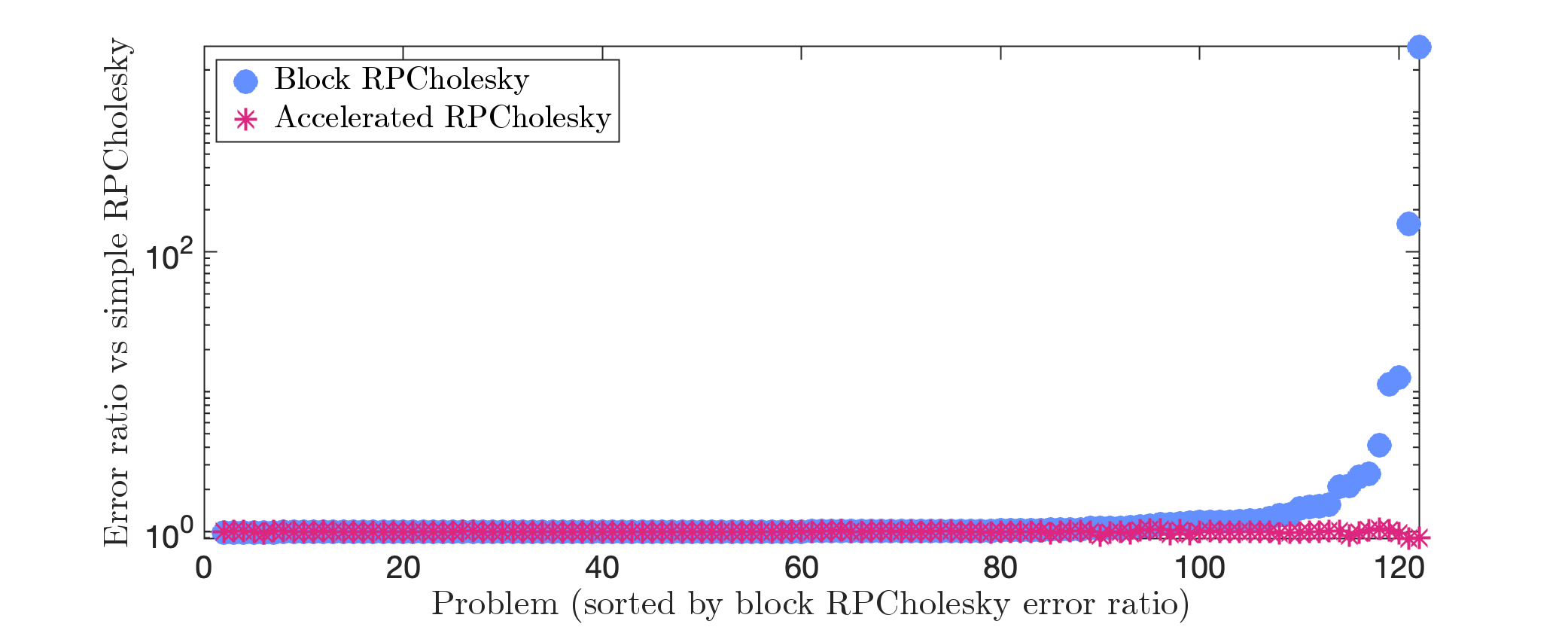}
    
    \caption{Speed-up factor (top) and trace error ratio (eq.~\cref{eq:ratio}, bottom) of block and accelerated \RPCholesky compared to simple \RPCholesky on the testbed of 125 kernel matrices.
    Examples are sorted by the block \RPCholesky speed-up factor (top) or the block \RPCholesky error ratio (bottom).}
    \label{fig:performance}
\end{figure}

Results are shown in \cref{fig:performance}.
The top panel shows the speed-up factor of block and accelerated \RPCholesky over simple \RPCholesky.
The speed-up factor is at least $5\times$ for all kernel matrices, and it reaches a level of $40\times$ to $50\times$ for $5\%$ of the matrices.
Consistent with \cref{fig:generation}, the largest speedups are for high-dimensional data sets (real or synthetic) with the Gaussian or Mat\'ern kernels.
On most examples, the acceptance rate for accelerated \RPCholesky remains at or above 50\%.
However, for a few problems, like \texttt{COMET\_MC\_SAMPLE} with a Gaussian kernel, the acceptance rate for accelerated \RPCholesky drops as low as 3\%.
As might be expected, the examples where accelerated \RPCholesky has a high rejection rate coincide with the problems where block \RPCholesky is much less accurate than accelerated \RPCholesky, demonstrating the importance of filtering the pivot set for these examples.

The bottom panel shows the ratio of trace norm errors
\begin{equation}
\label{eq:ratio}
    \frac{\tr(\mat{A} - \Ahat)}{\tr(\vphantom{{\mat{\hat{A}}}}\mat{A} - \Ahat_{\rm RPC})},
\end{equation}
where $\Ahat$ is computed by block or accelerated \RPCholesky and $\Ahat_{\rm RPC}$ is produced by simple \RPCholesky.
As expected, accelerated \RPCholesky and \RPCholesky have the same approximation quality.
For 100 out of 125 kernel matrices, block \RPCholesky leads to $1.00\times$ to $1.16\times$ the simple \RPCholesky error.
For the remaining matrices, block \RPCholesky produces a significantly worse approximation than simple \RPCholesky; the error is $3000\times$ worse for the most extreme problem.

\subsection{Potential energy surfaces in molecular chemistry}
\label{sec:case}

Molecular dynamics simulations rely on a fast-to-compute model for the potential energy of an atomic system.
Recent work has explored fitting potential energies with machine learning methods such as kernel ridge regression (KRR) \cite{chmiela2017machine,chmiela2018towards,chmiela2023accurate,unke2021machine}.
However, despite the ubiquity of KRR in molecular chemistry, there is a surprising lack of consensus over the best numerical method for KRR optimization.

Responding to this uncertainty, the present paper demonstrates how \emph{low-rank preconditioners} can accelerate KRR for potential energy modeling and it shows that accelerated \RPCholesky is the best available low-rank approximation method for this use case.
These conclusions are supported by experiments on a benchmark dataset of eight molecules: aspirin, benzene, ethanol, malonaldehyde, naphthalene, salicylic acid, toluence, and uracil.

The experiments in this section use density functional theory data released by Chmiela et al.\ \cite{chmiela2017machine}, available at \href{https://www.sgdml.org/}{sgdml.org}.
This data was prepared as follows:
\begin{itemize}
\item \emph{Data size:} The measurements for each molecule were randomly subsampled to $N = 10^5$ training points and $N_{\rm test} = 10^4$ testing points.
\item \emph{Input data features:} The input data consists of positions $\vec{r}_i\in\real^3$ for each atom $i = 1, \ldots, n_{\rm atoms}$.
Since the potential energy is invariant under rigid motions, the data is transformed into a vector $\vec{x}$ listing the inverse pairwise distances $1 / \lVert \vec{r}_i - \vec{r}_j \rVert$ for $1 \leq i < j \leq n_{\rm atoms}$ \cite{chmiela2017machine}.
Following the usual procedure, these features $\vec{x}$ were standardized by subtracting the mean and dividing by the standard deviation.
The dimensionality of the input data is thus $d = {n_{\rm atoms} \choose 2} \in [36, 210]$, depending on the molecule.
\item \emph{Output data features:} The output data is potential energy measurements in units of kcal mol$^{-1}$. 
The potential energies were centered by subtracting the mean but they were \emph{not} divided by the standard deviation.
The standard deviations are thus $2.3$--$6.0$ kcal mol$^{-1}$, depending on the molecule.
\end{itemize}

This paper employs a standard KRR model \cite{kanagawa2018gaussianprocesseskernelmethods}, in which the potential energy for each molecule is approximated by a linear combination of kernel functions,
\begin{equation}
\label{eq:predictor}
    f(\vec{x}) = \sum\nolimits_{i=1}^N \beta_i \, \kappa(\vec{x}_i, \vec{x}),
\end{equation}
that are centered on the training data points $(\vec{x}_i)_{1 \leq i \leq N}$.
The coefficients $\beta_i \in \mathbb{R}$ are chosen as the solution to the linear system:
\begin{equation}
\label{eq:reduce}
    (\mat{A} + \mu \mathbf{I}) \vec{\beta} = \vec{y}.
\end{equation}
Here, $\vec{y} \in \real^n$ is the vector of potential energy output values, $\mat{A} = (\kappa(\vec{x}_i, \vec{x}_j))_{1 \leq i,j \leq N}$ is the kernel matrix, and $\mu > 0$ is the regularization parameter.
Following \cite{chmiela2017machine}, the present study chooses $\kappa$ to be a Mat{\'e}rn-5/2 kernel
\begin{equation*}
    \kappa(\vec{x}, \vec{x}') = \Bigl(1 + \sqrt{5} \cdot \tfrac{\lVert \vec{x} - \vec{x}' \rVert}{\sigma} + \tfrac{5}{3} \cdot \tfrac{\lVert \vec{x} - \vec{x}' \rVert^2}{\sigma^2}\Bigr)
    \exp\Bigl(-\sqrt{5} \cdot \tfrac{\lVert \vec{x} - \vec{x}' \rVert}{\sigma} \Bigr)
\end{equation*}
with bandwidth $\sigma = \sqrt{d}$.
The regularization parameter is set to $\mu = 10^{-9} N$.

For large $N$, the standard approach for solving \cref{eq:reduce} in both the scientific \cite{chmiela2017machine,unke2021machine,chmiela2018towards,chmiela2023accurate} and mathematical \cite{cutajar2016preconditioning,DEF+23,avron2017faster,alaoui2015fast,gardner2018gpytorch,wang2019exact,frangella2023randomized} literatures is preconditioned conjugate gradient (PCG) \cite[Alg.~11.5.1]{GV13}.
PCG is an iterative algorithm, and each iteration requires a matrix--vector product with $\mat{A} + \mu \Id$ and a linear solve with a preconditioner matrix $\mat{P}$.
To control the number of iterations and the total runtime, it is essential to find an easy-to-invert preconditioner satisfying $\mat{P} \approx \mat{A} + \mu \Id$.
Recent work has promoted the use of
\emph{low-rank preconditioners}, which take the form
\begin{equation}
\label{eq:low_rank}
    \mat{P} = \mat{F} \mat{F}^* + \mu \mathbf{I} \quad \text{for } \mat{F} \in \mathbb{R}^{N \times k},
\end{equation}
where $\mat{F} \mat{F}^* \approx \mat{A}$ is a rank-$k$ approximation.
The community has vigorously debated the best algorithm to compute $\mat{F}$ in practice and in theory \cite{cutajar2016preconditioning,DEF+23,avron2017faster,alaoui2015fast,MM17,rudi2018fast,gardner2018gpytorch,wang2019exact,frangella2023randomized}.

\begin{figure}[t]
    \centering
    \includegraphics[width=.8\textwidth]{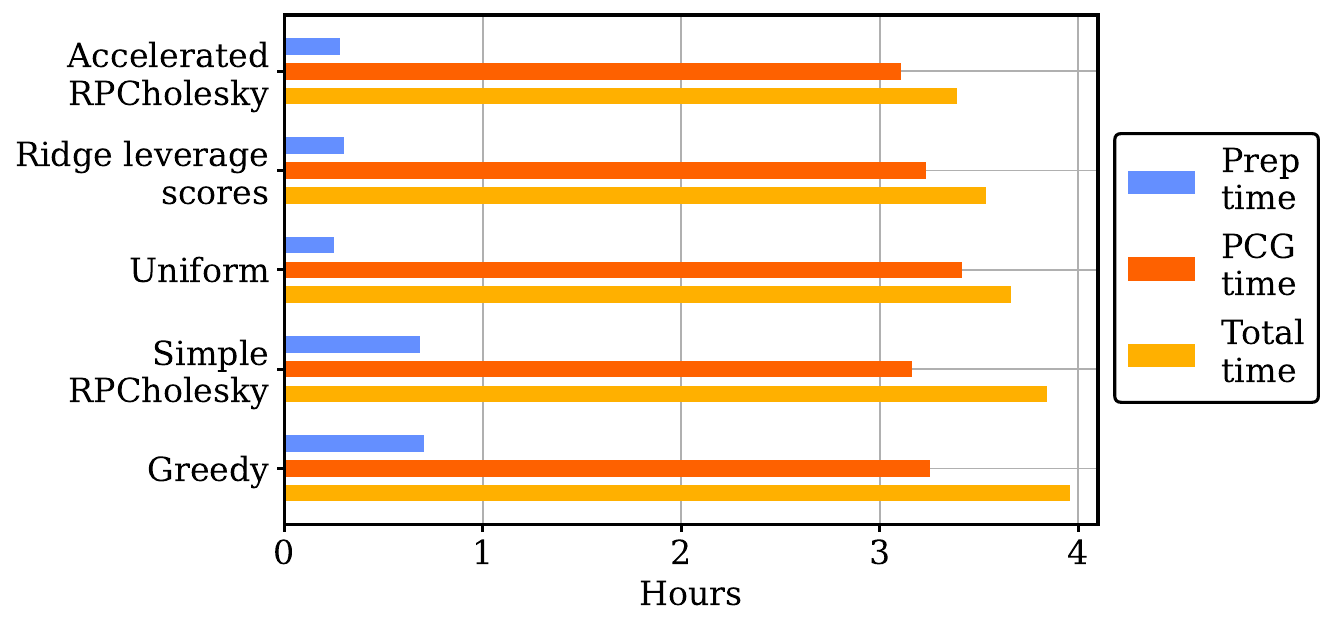}
    
    \caption{Runtime needed to generate the preconditioner (prep time) and run the necessary number of CG iterations until $\lVert \mat{A} \vec{\beta}^{(t)} - \vec{y} \rVert / \lVert \vec{y} \rVert < 10^{-3}$ (PCG time).
    Each calculation is performed once and the results are averaged over the eight molecules.}
    \label{fig:pes}
\end{figure}

\begin{table}[t]
    \centering
    \begin{tabular}{p{.18\textwidth}p{.15\textwidth}p{.1\textwidth}p{.09\textwidth}p{.1\textwidth}p{.18\textwidth}}
    \toprule
    Molecule & Accelerated \newline RPC & Simple \newline RPC & Greedy & Uniform & Ridge leverage scores \\\midrule
    Aspirin & 0.091 & 0.090 & \textbf{0.085} & 0.097 & 0.099 \\
    Benzene & \textbf{0.064} & \textbf{0.064} & 0.069 & 0.067 & 0.070 \\
    Ethanol & \textbf{0.078} & \textbf{0.078} & 0.087 & 0.079 & 0.080 \\
    Malonaldehyde & \textbf{0.073} & \textbf{0.073} & 0.084 & 0.074 & 0.075 \\
    Naphthalene & \textbf{0.106} & \textbf{0.106} & 0.108 & 0.109 & 0.111 \\
    Salicylic acid & \textbf{0.093} & \textbf{0.093} & 0.095 & 0.096 & 0.098 \\
    Toluene & 0.104 & \textbf{0.103} & 0.109 & 0.106 & 0.107 \\
    Uracil & 0.056 & 0.056 & \textbf{0.051} & 0.062 & 0.066 \\
    \bottomrule
    Average \newline (8 molecules) & \textbf{0.083} & \textbf{0.083} & 0.086 & 0.086 & 0.088
    \end{tabular}
    \caption{Relative error $\tr(\mat{A} - \mat{F} \mat{F}^*) / \tr(\mat{A})$ of the low-rank approximation for each kernel matrix.
    Accelerated and simple \RPCholesky have the same error up to $\pm 0.001$ random variations.
    Best results are indicated in bold for each molecule.}
    \label{tab:rel_error}
\end{table}

\Cref{fig:pes} charts the average runtime needed to perform potential energy prediction with a low-rank preconditioner generated by accelerated \RPCholesky, simple \RPCholesky, ridge leverage score sampling (using the RRLS algorithm \cite{MM17}), uniform Nystr\"om approximation \cite{cutajar2016preconditioning,frangella2023randomized}, and greedy Nystr\"om approximation \cite{gardner2018gpytorch,wang2019exact}.
The results show that accelerated \RPCholesky achieves the fastest runtimes, improving on the other low-rank approximation algorithms by $4\%$ to $17\%$.
Accelerated \RPCholesky has the twin advantages that it quickly generates a preconditioner (fast prep time), and it leads to the minimal number of PCG iterations (fast PCG time).
Another important feature is the consistent performance across problems: accelerated \RPCholesky always produces near-optimal low-rank approximations, whereas other methods can struggle on difficult examples; see \cref{tab:rel_error}.

\begin{table}[t]
    \centering
    \begin{tabular}{llcc}
    \toprule
    Molecule & Formula & Old energy error & New energy error \\\midrule
    Aspirin & \ce{C_9H_8O_4} & 0.127 & \textbf{0.040} \\ 
    Benzene & \ce{C_6H_6} & 0.069 & \textbf{0.048} \\
    Ethanol & \ce{C_2H_6O} & \textbf{0.052} & 0.089 \\
    Malonaldehyde & \ce{C_3H_4O_2} & \textbf{0.074} & 0.133 \\ 
    Naphthalene & \ce{C_{10}H_8} & \textbf{0.113} & 0.126 \\ 
    Salicylic acid & \ce{C_7H_6O_3} & 0.105 & \textbf{0.102} \\
    Toluene & \ce{C_6H_5CH_3} & \textbf{0.092} & 0.146 \\ 
    Uracil & \ce{C_4H_4N_2O_2} & 0.103 & \textbf{0.010} \\
    \bottomrule
    \end{tabular}
    \caption{\label{tab:tab_results}
    Mean absolute error (kcal / mol) for the existing potential energy model \cite{chmiela2017machine} versus the new model \cref{eq:predictor}.
    Best results are indicated in bold for each molecule.}
\end{table}

The potential energy model in this paper is a direct application of standard KRR, yet it leads to higher accuracies for four of eight molecules than the more complicated kernel model in \cite{chmiela2017machine}.
On the most extreme example (uracil), the simple model is 10$\times$ more accurate then the existing model.
See \cref{tab:tab_results} for a table including all eight molecules.
These experiments demonstrate that simple kernel methods combined with scalable algorithms based on accelerated \RPCholesky can be competitive for problems in scientific machine learning.

\section{Theoretical results} \label{sec:theory}

This section compares accelerated \RPCholesky with simple \RPCholesky using rigorous error analysis.
On the surface, the comparison is straightforward.
Since accelerated \RPCholesky generates the same distribution of pivots as simple \RPCholesky, it satisfies the main \RPCholesky error bound \cite[Thm.~5.1]{CETW23}:

\begin{theorem}[Sufficient pivots for simple and accelerated \RPCholesky \cite{CETW23}] \label{thm:old_bound}
    Consider a psd matrix $\mat{A} \in \complex^{N \times N}$, and let $\mat{A}^{(k)}$ denote the random residual after applying simple or accelerated \RPCholesky with $k$ random pivots.
    Then
    \begin{equation*}
        \mathbb{E}[\tr(\mat{A}^{(k)})] \leq (1 + \varepsilon) \cdot \sum\nolimits_{i = r + 1}^N \lambda_i(\mat{A})
    \end{equation*}
    as soon as
    \begin{equation*}
        k \geq \frac{r}{\varepsilon} + r \log\Bigl(\frac{1}{\epsilon \eta}\Bigr),
        \quad \text{where } \eta = \frac{\sum_{i = r + 1}^N \lambda_i(\mat{A})}{\sum_{i = 1}^r \lambda_i(\mat{A})}.
    \end{equation*}
\end{theorem}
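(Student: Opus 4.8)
The plan is to reduce Theorem~\ref{thm:old_bound} for accelerated \RPCholesky to the bound already established for simple \RPCholesky in \cite[Thm.~5.1]{CETW23}, which we treat as a black box. All that must be supplied is the distributional identity announced just before the statement: the pivot sequence produced by accelerated \RPCholesky has exactly the same law as the sequence produced by simple \RPCholesky. Here ``$k$ random pivots'' is interpreted as running accelerated \RPCholesky and halting at the $k$th accepted pivot (possibly in the middle of a round). Once this identity is in hand, the residual $\mat{A}^{(k)}$ has the same distribution for both algorithms, so the asserted inequality and the stopping condition on $k$ transfer verbatim.

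The core step is to show that \RejectChol (\cref{alg:rejection_rpc}) reproduces a prefix of a simple-\RPCholesky run. Fix the state at the start of a round: selected pivot set $\set{S}$, residual $\mat{A}^{(i)} = \mat{A} - \Ahat$, and exact proposal vector $\vec{u} = \diag \mat{A}^{(i)}$. The algorithm draws $b$ i.i.d.\ proposals from $\prob\{s = j\} = \vec{u}(j)/\tr \mat{A}^{(i)}$ and runs a sequential accept/reject sweep. I would induct over this sweep: after processing some proposals and accepting the subset $\set{T} \subseteq \set{S}_i'$, write $\set{U} := \set{S} \cup \set{T}$ and $\mat{R}_{\set{U}} := \mat{A} - \mat{A}(:,\set{U})\mat{A}(\set{U},\set{U})^\dagger \mat{A}(\set{U},:)$; the key identity is
\[
    \mat{R}_{\set{U}}(j,j) = \mat{A}^{(i)}(j,j) - \mat{A}^{(i)}(j,\set{T})\,\mat{A}^{(i)}(\set{T},\set{T})^\dagger\,\mat{A}^{(i)}(\set{T},j) \for j = 1,\dots,N.
\]
This is the Schur-complement/quotient identity for psd matrices, and the running Cholesky updates in \cref{alg:rejection_rpc} compute precisely the right-hand side (the pseudoinverse and the pivoted Cholesky step agree, also in the rank-deficient case). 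Consequently the acceptance probability $p(s')$ of a proposal $s'$ equals $\mat{R}_{\set{U}}(s',s') / \mat{A}^{(i)}(s',s')$; multiplying by the proposal density, which is $\propto \mat{A}^{(i)}(s',s')$, cancels the denominator and yields a density $\propto \mat{R}_{\set{U}}(s',s')$ --- exactly the simple-\RPCholesky pivot distribution conditional on $\set{U}$. Since the $b$ proposals are independent and rejected proposals are discarded without memory, the accepted subsequence is distributed exactly as consecutive simple-\RPCholesky pivots starting from $\set{S}$; in particular the first proposal is always accepted, matching the fact that at round-start the proposal distribution already \emph{is} the \RPCholesky distribution.

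It remains to stitch the rounds together. At the start of round $i$ the vector $\vec{u}$ is reset to the exact residual diagonal $\diag(\mat{A} - \Ahat^{(i)})$ (the $\max\{\cdot,\vec0\}$ safeguard is a floating-point device and does not affect the exact-arithmetic law), so the round's proposal distribution is the correct \RPCholesky distribution given the pivots chosen in earlier rounds. Combining the single-round statement with the tower property over rounds shows that the concatenated pivot sequence $(s_1, s_2, \dots)$ has the \RPCholesky law; halting at the $k$th acceptance gives $(s_1,\dots,s_k)$ distributed as $k$ \RPCholesky pivots, hence $\mat{A}^{(k)} \stackrel{d}{=}$ the simple-\RPCholesky residual, and \cite[Thm.~5.1]{CETW23} finishes the proof. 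I expect the main obstacle to be the bookkeeping in the middle step --- phrasing the conditioning so that ``the accepted subsequence of i.i.d.\ proposals, each kept with a residual-dependent probability'' provably coincides with the sequential \RPCholesky selection rule, and checking the Schur-complement identity together with the algorithm's Cholesky updates in the rank-deficient regime. The eigenvalue estimate itself contributes nothing new; it is inherited wholesale from \cite{CETW23}.
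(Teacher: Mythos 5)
Your proposal is correct and matches the paper's treatment: Theorem~\ref{thm:old_bound} is imported from \cite[Thm.~5.1]{CETW23} as a black box, and the only new content needed is the distributional identity between the accelerated and simple pivot sequences, which the paper asserts (in \cref{sec:rejection,sec:rejection2}) on exactly the rejection-sampling grounds you spell out. Your elaboration of the Schur-complement/quotient identity and the cancellation of the proposal density against the acceptance ratio is a more careful justification than the paper itself provides, but it is the same argument.
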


\Cref{thm:old_bound} bounds the sufficient number of pivots in accelerated \RPCholesky,
yet it does
not bound the sufficient number of iterations needed to produce the pivots via rejection sampling.
The following new result completes the theoretical understanding of accelerated \RPCholesky, by bounding the number of iterations. 
It also provides error bounds for block \RPCholesky.
This new result will be proved in \cref{sec:linear_algebra,sec:majorant,sec:permutation,sec:dynamical}.

\begin{theorem}[Sufficient iterations for accelerated and block \RPCholesky] \label{thm:main_bound}
    Consider a psd matrix $\mat{A} \in \complex^{N \times N}$, and let $\mat{A}^{(t)}$ denote the random residual after applying $t$ rounds of accelerated \RPCholesky or block \RPCholesky with a block size $b \geq 1$.
    Then
    \begin{equation*}
        \mathbb{E}[\tr(\mat{A}^{(t)})] \leq (1 + \varepsilon) \cdot \sum\nolimits_{i = r + 1}^N \lambda_i(\mat{A})
    \end{equation*}
    as soon as the number of proposed pivots $bt$ satisfies
    \begin{equation*}
        bt \geq \frac{r}{\varepsilon} + (r + b ) \log\Bigl(\frac{1}{\epsilon \eta}\Bigr),
        \quad \text{where } \eta = \frac{\sum_{i = r + 1}^N \lambda_i(\mat{A})}{\sum_{i = 1}^r \lambda_i(\mat{A})}.
    \end{equation*}
\end{theorem}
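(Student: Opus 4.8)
The plan is to track the residual not round by round but proposal by proposal. Number the $bt$ proposed pivots $1,\dots,bt$ in the order drawn, and let $\mat{A}_\ell$ be the Schur complement of $\mat{A}$ with respect to the pivots \emph{retained} among the first $\ell$ proposals: block \RPCholesky retains every proposal, whereas accelerated \RPCholesky retains only the accepted ones. Then $\mat{A}_0 = \mat{A}$, $\mat{A}_{bt} = \mat{A}^{(t)}$, and, taking $\mathcal{F}_\ell$ to be the information in the first $\ell$ proposals and their accept/reject outcomes, the round-start trace $Q_i \coloneqq \tr \mat{A}_{bi}$ is $\mathcal{F}_\ell$-measurable for $\ell \ge bi$. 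The key linear-algebra step is the \emph{unified one-proposal inequality}, for $i = \lfloor \ell/b\rfloor$:
\begin{equation*}
  \mathbb{E}\bigl[ \mat{A}_{\ell+1} \mid \mathcal{F}_\ell \bigr] \preceq \mat{A}_\ell - \frac{\mat{A}_\ell^2}{Q_i},
\end{equation*}
with equality for accelerated \RPCholesky. Conditioning on the identity $s'$ of proposal $\ell+1$: it is drawn $\propto \mat{A}_{bi}(s',s')$ from the stale round-start diagonal, and in accelerated \RPCholesky it is accepted with probability $\mat{A}_\ell(s',s')/\mat{A}_{bi}(s',s')$; multiplying the two factors cancels the stale weight, leaving expected decrement $\sum_{s'} \mat{A}_\ell(:,s')\mat{A}_\ell(s',:) / \tr\mat{A}_{bi} = \mat{A}_\ell^2/Q_i$ exactly. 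For block \RPCholesky the same sum appears, but each term carries a factor $\mat{A}_{bi}(s',s')/\mat{A}_\ell(s',s') \ge 1$ (the residual diagonal only shrinks) and an already-retained pivot contributes nothing, so the decrement dominates $\mat{A}_\ell^2/Q_i$ in the psd order. In words: both blocked methods obey the same expected-decrease law that drives the simple \RPCholesky bound of \cref{thm:old_bound} \cite{CETW23}, except that the denominator is \emph{frozen at the round-start trace} $Q_i$ rather than the current trace. For block \RPCholesky one must also fix a legitimate order in which to feed a round's i.i.d.\ proposals into this sequential picture; exchangeability makes the choice immaterial for the endpoints $\mat{A}_{bi}$, which is all that is used downstream.

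Next I would collapse this to a deterministic scalar recursion for $\tau_\ell \coloneqq \mathbb{E}[\tr \mat{A}_\ell]$. Taking the trace of the one-proposal inequality and using $\tr(\mat{A}_\ell^2) \ge r^{-1}H_\ell^2$, where $H_\ell \coloneqq \sum_{i\le r}\lambda_i(\mat{A}_\ell)$, gives $\mathbb{E}[\tr\mat{A}_{\ell+1}\mid\mathcal{F}_\ell]\le \tr\mat{A}_\ell - H_\ell^2/(rQ_i)$. Taking expectations, the joint convexity of $(h,q)\mapsto h^2/q$ yields $\mathbb{E}[H_\ell^2/Q_i]\ge (\mathbb{E}H_\ell)^2/\mathbb{E}Q_i$; moreover $\mathbb{E}Q_i = \tau_{bi}$, and since $\mat{A}_\ell\preceq \mat{A}$ one has $H_\ell \ge \tr\mat{A}_\ell - \Lambda$ surely, where $\Lambda \coloneqq \sum_{i>r}\lambda_i(\mat{A})$, so $\mathbb{E}H_\ell \ge (\tau_\ell - \Lambda)_+$. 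Altogether,
\begin{equation*}
  \tau_{\ell+1} \le \tau_\ell - \frac{(\tau_\ell - \Lambda)_+^2}{r\,\tau_{bi}}, \qquad i = \lfloor \ell/b\rfloor,\quad \tau_0 = \tr\mat{A}.
\end{equation*}
Since $\tr\mat{A}_\ell$ is non-increasing in $\ell$, it suffices to show this deterministic sequence obeys $\tau_{bt}\le(1+\varepsilon)\Lambda$ under the stated threshold on $bt$; so we may assume $\tau_\ell>(1+\varepsilon)\Lambda$ throughout and work with the cleaner recursion $g_{\ell+1}\le g_\ell - g_\ell^2/\bigl(r(g_{bi}+\Lambda)\bigr)$ for the excess $g_\ell \coloneqq \tau_\ell - \Lambda > \varepsilon\Lambda$.

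The final step is a deterministic analysis of this dynamical system. Within round $i$ the denominator is frozen at $r(g_{bi}+\Lambda)$, so $g$ iterates $g\mapsto g\bigl(1 - g/(r(g_{bi}+\Lambda))\bigr)$ for $b$ steps. In the ``large-head'' regime $g_{bi}\ge\Lambda$ one has $g_{bi}+\Lambda\le 2g_{bi}$; using that $g$ is non-increasing within a round, one round contracts $g$ by a factor $\rho_i$ with $\rho_i e^{b\rho_i/(2r)}\le 1$, which forces $\rho_i \le 1 - \Omega(\min\{1,b/r\})$, so the staleness only mildly throttles the otherwise-geometric decay, and it follows that $\order((r+b)\log(1/\eta))$ proposals bring $g$ down from $\sum_{i\le r}\lambda_i(\mat{A})$ to $O(\Lambda)$ --- the extra additive $b$ being precisely the price of not refreshing the sampling distribution within a round. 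Once $g\le\Lambda$, the frozen denominator $r(g_{bi}+\Lambda)$ lies within a factor $2$ of $r\Lambda$ regardless of staleness, so the recursion matches that of simple \RPCholesky up to constants: telescoping $1/g_{\ell+1}\ge 1/g_\ell + 1/(2r\Lambda)$ shows $\order(r/\varepsilon)$ further proposals drive $g$ below $\varepsilon\Lambda$. Adding the two phases and tracking constants through the crossover $g\approx\Lambda$ (which supplies the $\log(1/\varepsilon)$ inside the logarithm) yields the threshold $bt\ge r/\varepsilon + (r+b)\log(1/(\varepsilon\eta))$.

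The main obstacle I anticipate is the dynamical-system analysis, specifically producing the \emph{additive} constant $r+b$ rather than something like $r\cdot b$ out of the frozen-denominator regime: the naive estimate, which replaces $Q_i$ by the global trace $\tr\mat{A}$, loses a factor of $b$, so one genuinely needs the non-wasteful within-round contraction bound $\rho_i e^{b\rho_i/(2r)}\le 1$ together with a careful split into the large- and small-head regimes. A secondary point to get right is the one-proposal inequality in the block case: although elementary, it hinges on the retained-pivot bookkeeping ($\mat{A}_{bi}(s',s')\ge\mat{A}_\ell(s',s')$, repeats contributing nothing) and on choosing a valid processing order for the i.i.d.\ block, which is where an exchangeability argument enters.
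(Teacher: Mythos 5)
Your reduction to a scalar recursion is sound and takes a genuinely different, more elementary route than the paper. The paper never works proposal-by-proposal with traces directly: it proves an operator-level bound $\lambda_i(\expect[\mat{A}^{(t)}])\le\lambda_i(\mat{\Phi}_b^{(t)}(\mat{A}))$ (\cref{prop:accelerated}) via operator concavity of $\mat{\Phi}_1$, the min--max principle, and a coupling in which block \RPCholesky's pivot set contains accelerated \RPCholesky's; it then passes to a worst-case two-cluster spectrum through a permutation-averaging lemma (\cref{lem:permutation}). Your approach replaces all of that with the unified one-proposal inequality $\expect[\mat{A}_{\ell+1}\mid\mathcal{F}_\ell]\preceq\mat{A}_\ell-\mat{A}_\ell^2/Q_i$ (your direct domination argument for the block case, using $\mat{A}_{bi}(s',s')\ge\mat{A}_\ell(s',s')$ and the vanishing of columns with zero diagonal, is a valid substitute for the paper's coupling), followed by $\tr(\mat{A}_\ell^2)\ge H_\ell^2/r$, $H_\ell\ge\tr\mat{A}_\ell-\Lambda$, and Jensen for the jointly convex perspective function $(h,q)\mapsto h^2/q$. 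What this buys is a much shorter path: it avoids \cref{lem:matrix}, the eigenvector bookkeeping, and the worst-case-matrix identification entirely. What it gives up is the stronger eigenvalue-by-eigenvalue conclusion of \cref{prop:accelerated}; for \cref{thm:main_bound} only the trace is needed, so nothing is lost. Notably, iterating your within-round step $g\mapsto g(1-g/(rQ))$ for $b$ steps via $1/g_{\ell+1}\ge 1/g_\ell+1/(rQ)$ reproduces \emph{exactly} the paper's per-round recursion \cref{eq:discrete_time}, $\alpha^{(t+1)}=\alpha^{(t)}-b(\alpha^{(t)})^2/((b+r)\alpha^{(t)}+r\beta)$.

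The one genuine gap is the final dynamical-systems step. Your two-phase analysis (a contraction bound $\rho_i\e^{b\rho_i/(2r)}\le 1$ in the large-head regime, harmonic decay afterwards, with $\Omega(\cdot)$ and $\order(\cdot)$ constants and a factor-of-$2$ loss at the crossover) would only deliver the threshold up to unspecified constants, not the stated $bt\ge r/\varepsilon+(r+b)\log(1/(\varepsilon\eta))$ with unit constants. Since you have already arrived at the paper's recursion, the clean finish is available and costs only a few lines: bound the discrete iteration by the monotone continuous-time system $\dot{\alpha}=-b\alpha^2/((b+r)\alpha+r\beta)$ as in \cref{eq:continuous_time} (valid because $x\mapsto bx^2/((b+r)x+r\beta)$ is nonnegative and nondecreasing) and integrate by separation of variables, which yields $t=\frac{b+r}{b}\log(1/(\varepsilon\eta))+\frac{r}{b\varepsilon}-\frac{r}{b}\eta$ exactly. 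I recommend replacing your phase-splitting argument with this ODE comparison; as written, your last step does not prove the theorem as stated.
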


The comparison between accelerated and block \RPCholesky follows from a coupling argument in which block \RPCholesky accepts a greater number of pivots than accelerated \RPCholesky in each round, leading to a higher approximation quality.
However, in this comparison, accelerated \RPCholesky has the benefit of rejecting many of the proposed pivots and therefore producing a similarly accurate approximation with a smaller approximation rank.
When the methods are run with the same approximation rank, accelerated \RPCholesky can be much more accurate, as reflected in the numerical experiments of \cref{sec:numerics} and especially the failure modes of block \RPCholesky in \cref{fig:performance,fig:initial}.

The bounds in \cref{thm:main_bound,thm:old_bound} hold for all accuracy levels $\varepsilon > 0$,
and they each contain a term $r / \varepsilon$, which is the minimal number of pivots needed to guarantee an error level $(1 + \varepsilon) \cdot \sum\nolimits_{i = r + 1}^N \lambda_i(\mat{A})$ in any low-rank approximation produced by a pivoted partial Cholesky decomposition applied to a worst-case matrix \cite[Thm.~C.1]{CETW23}.
The bounds also contain a term $r \log(1 / (\varepsilon \eta))$ or $(r + b) \log(1 / (\varepsilon \eta))$, which represents the number of pivots that need to be accepted or proposed for the algorithm to improve the approximation quality from an initial error level $\tr(\mat{A}) = \eta^{-1} \cdot \sum\nolimits_{i = r + 1}^N \lambda_i(\mat{A})$
to a lower error level $(1 + \varepsilon) \cdot \sum\nolimits_{i = r + 1}^N \lambda_i(\mat{A})$.
Indeed, the only difference between \cref{thm:main_bound,thm:old_bound} is the quantity $b \log(1 / (\varepsilon \eta))$, present in the latter.
This difference becomes significant 
when the theorems 
are evaluated 
for a small target rank $r < b$ and a tiny approximation error $\eta \ll 1$.
This extreme setting occurs when the matrix $\mat{A}$ is nearly rank-$r$.
Then, 
accelerated \RPCholesky accepts very few proposals per round, so it takes many rounds to converge to high accuracy.

Last, \cref{thm:main_bound} can be compared with the bound of Deshpande, Rademacher, Vempala, and Wang \cite{DRVW06}.
Their bound was originally derived for block randomly pivoted QR, but it can be extended to block \RPCholesky.
Here is a simplified version of their result, adapted from \cite[sec.~3.6]{CETW23}:

\begin{theorem}[Sufficient iterations for block \RPCholesky with a large block size \cite{DRVW06}] \label{thm:drvw_bound}
    Fix $\varepsilon \le 3/2$ and a large block size $b \ge 3 r/\varepsilon$.
    Consider a psd matrix $\mat{A}$, and let $\mat{A}^{(t)}$ denote the random residual after applying $t$ rounds of block \RPCholesky.
    Then
    \begin{equation*}
        \mathbb{E}[\tr(\mat{A}^{(t)})] \leq (1 + \varepsilon) \cdot \sum\nolimits_{i = r + 1}^N \lambda_i(\mat{A})
    \end{equation*}
    as soon as
    \begin{equation*}
        t \geq 1 +  \frac{\log(1/\eta + 1)}{\log(3/\varepsilon)},
        \quad \text{where } \eta = \frac{\sum_{i = r + 1}^N \lambda_i(\mat{A})}{\sum_{i = 1}^r \lambda_i(\mat{A})}.
    \end{equation*}
\end{theorem}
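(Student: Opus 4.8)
Write $\Delta_r \coloneqq \sum_{i=r+1}^N \lambda_i(\mat{A})$ and $\Sigma_r \coloneqq \sum_{i=1}^r \lambda_i(\mat{A})$, so that $\eta = \Delta_r/\Sigma_r$, $\tr(\mat{A}) = \Sigma_r + \Delta_r$, and $1/\eta + 1 = \tr(\mat{A})/\Delta_r$. (If $\Delta_r = 0$ then $\mat{A}$ has rank at most $r$ and the bound is trivial, so assume $\Delta_r > 0$.) The plan is to establish one per-round contraction inequality and iterate it. The key lemma is the psd analogue of the squared-norm column-sampling estimate behind the DRVW bound \cite{DRVW06} (cf.\ \cite[sec.~3.6]{CETW23}): if $\mat{M}$ is psd and $\set{S}$ is a multiset of $b$ indices drawn i.i.d.\ with probabilities $\mat{M}(j,j)/\tr(\mat{M})$, and $\mat{M}' \coloneqq \mat{M} - \mat{M}(:,\set{S})\mat{M}(\set{S},\set{S})^\dagger\mat{M}(\set{S},:)$ is the resulting Schur complement (one round of block \RPCholesky applied to $\mat{M}$), then
\[
\mathbb{E}[\tr(\mat{M}')] \;\le\; \sum_{j = r+1}^N \lambda_j(\mat{M}) \;+\; \frac{r}{b}\,\tr(\mat{M}).
\]

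To prove the lemma I would factor $\mat{M} = \mat{B}\mat{B}^*$. Then $\mat{M}(j,j) = \norm{\mat{B}(j,:)}^2$, so the \RPCholesky pivot rule samples columns of $\mat{B}^*$ proportionally to their squared norms, and the pseudoinverse identity gives $\mat{M}' = \mat{B}(\Id - \mat{\Pi})\mat{B}^*$, where $\mat{\Pi}$ is the orthogonal projector onto the span of the sampled columns of $\mat{B}^*$. Hence $\tr(\mat{M}') = \norm{\mat{B}^* - \mat{\Pi}\mat{B}^*}_{\mathrm{F}}^2$, and the displayed inequality is exactly the classical bound: projecting $\mat{B}^*$ onto the span of $b$ columns sampled proportional to squared norms achieves, in expectation, the best rank-$r$ error $\norm{\mat{B}^* - (\mat{B}^*)_r}_{\mathrm F}^2 = \sum_{j>r}\lambda_j(\mat{M})$ plus the additive slack $\tfrac{r}{b}\norm{\mat{B}^*}_{\mathrm F}^2 = \tfrac{r}{b}\tr(\mat{M})$ (projection onto the full span of the sample does at least as well as the best rank-$r$ matrix supported there). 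This reduction is the crux; everything afterward is elementary.

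Now iterate. Each residual $\mat{A}^{(i)}$ is a psd matrix with $0 \preceq \mat{A}^{(i)} \preceq \mat{A}$, being $\mat{A}$ minus a psd correction, so eigenvalue monotonicity gives $\sum_{j>r}\lambda_j(\mat{A}^{(i)}) \le \Delta_r$. Setting $E_i \coloneqq \mathbb{E}[\tr(\mat{A}^{(i)})]$, applying the lemma with $\mat{M} = \mat{A}^{(i)}$, taking expectations over the history via the tower rule, and using $b \ge 3r/\varepsilon$, i.e.\ $r/b \le \varepsilon/3$, yields $E_{i+1} \le \tfrac{\varepsilon}{3}E_i + \Delta_r$ for $i = 0,\ldots,t-1$. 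Since $\varepsilon \le 3/2$ forces $\varepsilon/3 \le 1/2 < 1$, unrolling from $E_0 = \tr(\mat{A})$ and bounding the finite geometric series gives
\[
E_t \;\le\; \Bigl(\tfrac{\varepsilon}{3}\Bigr)^{t}\tr(\mat{A}) + \frac{\Delta_r}{1 - \varepsilon/3} \;=\; \Bigl(\tfrac{\varepsilon}{3}\Bigr)^{t}\tr(\mat{A}) + \frac{3}{3-\varepsilon}\,\Delta_r.
\]

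Finally I would feed in the hypothesis on $t$. Because $\varepsilon \le 3/2 < 3$ we have $\log(3/\varepsilon) > 0$, so $t \ge 1 + \log(1/\eta + 1)/\log(3/\varepsilon)$ is equivalent to $(3/\varepsilon)^{t-1} \ge 1/\eta + 1 = \tr(\mat{A})/\Delta_r$, which rearranges to $(\varepsilon/3)^{t}\tr(\mat{A}) \le (\varepsilon/3)\Delta_r$. Substituting,
\[
E_t \;\le\; \Bigl(\frac{\varepsilon}{3} + \frac{3}{3-\varepsilon}\Bigr)\Delta_r \;\le\; (1+\varepsilon)\,\Delta_r,
\]
where the last step is the elementary inequality $\tfrac{\varepsilon}{3} + \tfrac{3}{3-\varepsilon} \le 1 + \varepsilon$ on $(0,3/2]$, which upon clearing the positive denominator $3(3-\varepsilon)$ reduces to $\varepsilon(3 - 2\varepsilon) \ge 0$. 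This is the claimed bound $\mathbb{E}[\tr(\mat{A}^{(t)})] \le (1+\varepsilon)\sum_{i=r+1}^N\lambda_i(\mat{A})$. The main obstacle is the per-round contraction lemma — establishing the factorization identity $\mat{M}' = \mat{B}(\Id-\mat{\Pi})\mat{B}^*$ cleanly and invoking the squared-norm sampling bound with the sharp constant $r/b$; the rest is a geometric-series iteration and an algebraic inequality.
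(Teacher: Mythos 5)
Your proposal is correct, and it cannot be compared against an in-paper argument because the paper does not prove \cref{thm:drvw_bound} at all: the theorem is quoted as background from \cite{DRVW06}, in the simplified form given in \cite[sec.~3.6]{CETW23}. Your route is essentially the one those sources take. The reduction $\tr(\mat{M}') = \norm{(\Id-\mat{\Pi})\mat{B}^*}_{\mathrm{F}}^2$ with $\mat{M}=\mat{B}\mat{B}^*$ is valid (the pivot distribution $\mat{M}(j,j)/\tr\mat{M}$ is exactly squared-column-norm sampling for $\mat{B}^*$), the per-round bound $E_{i+1} \le \Delta_r + \tfrac{r}{b}E_i$ follows from it together with $\lambda_j(\mat{A}^{(i)})\le\lambda_j(\mat{A})$ (Weyl, since $\mat{A}^{(i)}\preceq\mat{A}$), and the geometric-series unrolling, the equivalence of the hypothesis on $t$ with $(\varepsilon/3)^t\tr(\mat{A})\le(\varepsilon/3)\Delta_r$, and the closing inequality $\tfrac{\varepsilon}{3}+\tfrac{3}{3-\varepsilon}\le 1+\varepsilon$ for $\varepsilon\le 3/2$ all check out. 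The only step you take on faith is the length-squared (adaptive) sampling lemma with the sharp constant $r/b$; since that lemma is precisely the content of the cited result \cite{DRVW06} (in the Frieze--Kannan--Vempala tradition), invoking it is appropriate here, but a fully self-contained proof would have to include its short argument (construct a rank-$r$ competitor in the sample span from unbiased estimates of the top $r$ singular directions, then note that projection onto the full span does at least as well).
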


When \cref{thm:main_bound,thm:drvw_bound} are both applicable, they specify a similar number of sufficient iterations for block \RPCholesky to converge to high accuracy.
However, 
\cref{thm:drvw_bound} has a more limited range of applicability since it requires a large block size $b \geq 3 r / \varepsilon$.
This prescription for a large block size is exactly the reverse of block \RPCholesky's behavior in practice, in which the method provides more accurate approximations given a smaller block size.
In particular,
\Cref{thm:drvw_bound}
has limited applicability in the typical setting where the user tunes the block size $b$ for a given computing setup to achieve the fastest runtime.
The final weakness of \cref{thm:drvw_bound} is that it does not extend to accelerated \RPCholesky.

\subsection{Method of proof} \label{sec:method}

The proof uses many probabilistic and linear algebraic tools that were introduced in the analysis of simple \RPCholesky \cite[Thm.~5.1]{CETW23}, and it follows the same core approach: 
\begin{itemize}
    \item[(a)] Evaluate the function that gives the expected value of the residual matrix after generating and processing a single pivot.
    \item[(b)] Prove monotonicity and concavity properties for the expected residual function. 
    \item[(c)] Use monotonicity and concavity to identify a worst-case matrix $\mat{A} \in \mathbb{R}^{N \times N}$.
    \item[(d)] Analyze the worst-case matrix to produce error bounds.
\end{itemize} 

As the core difficulty, accelerated \RPCholesky leads to an expected residual function that does not satisfy the matrix monotonicity property.
Therefore, it is necessary to develop more subtle concavity and monotonicity properties and the proof is twice as long as before.

The expected residual function for accelerated \RPCholesky can be derived as follows.
Let $\Ahat^{(1, i)}$ be the low-rank approximation generated from the first $i\le b$ proposals, and let $\mat{A}^{(1, i)} = \mat{A} - \vphantom{\mat{\hat{A}}}\Ahat^{(1, i)}$ be the corresponding residual matrix.
Accelerated \RPCholesky updates the residual matrix in three steps:
\begin{enumerate}
\item A pivot $s_{i+1}'$ is sampled according to $\mathbb{P}\{s_{i+1}' = j\} = \mat{A}(j,j) / \tr \mat{A}$.
\item The pivot is accepted with probability $\mat{A}^{(1, i)}(j,j) / \mat{A}(j,j)$.
\item Conditional on acceptance, the residual matrix is updated:
\begin{equation*}
    \mat{A}^{(1, i+1)} = \mat{A}^{(1, i)} - \mat{A}^{(1, i)}(:, j) \mat{A}^{(1, i)}(j, :) / \mat{A}^{(1, i)}(j,j).
\end{equation*}
\end{enumerate}
By combining steps 1--3 above, the expected residual matrix is:
\begin{align*}
    \mathbb{E}[\mat{A}^{(1,i+1)}| \mat{A}^{(1,i)}]
    &= \mat{A}^{(1,i)} - \sum\nolimits_{j=1}^N \frac{\mat{A}(j,j)}{\tr(\mat{A})} \cdot \frac{\mat{A}^{(1, i)}(j,j)}{\mat{A}(j,j)} \cdot \frac{\mat{A}^{(1,i)}(:,j) \mat{A}^{(1,i)}(j, :)}{\mat{A}^{(1, i)}(j,j)} \\
    &= \mat{A}^{(1,i)} - \frac{1}{\tr(\mat{A})} (\mat{A}^{(1,i)})^2.
\end{align*}
Therefore, the expected residual function can be defined as:
\begin{equation*}
    \phi_{\alpha}(x) = x - \tfrac{1}{\alpha} x^2,
    \quad \text{where } \alpha = \tr(\mat{A}) \text{ and } x = \mat{A}^{(1,i)}.
\end{equation*}
This function depends on a scalar parameter $\alpha > 0$, and it can be evaluated for inputs $x$ that are scalars or matrices.
By repeated function composition, this function bounds the expected residual matrix of accelerated \RPCholesky after any number of steps.

Bounding the error in this way requires an expected residual function that is concave and monotone.
However, the function $\phi_{\alpha}(x)$ barely has enough monotonocity for the argument to work.
In particular, the mapping $x \mapsto \phi_{\alpha}(x)$ is only monotone for scalars $x \in [0, \alpha/2]$, and it fails to be monotone for large classes of psd matrices.

The analysis pushes through the technical difficulties and produces an upper bound for accelerated \RPCholesky depending on the expected residual function.
The bound becomes worse when the eigenvalues are averaged together or increased.
Hence, the worst-case matrix takes the form
\begin{equation*}
    \mat{A}
    = \operatorname{diag}\Bigl\{\underbrace{\frac{\gamma}{r}, \ldots, \frac{\gamma}{r}}_{r \text{ times}},
    \underbrace{\frac{\beta}{N - r}, \ldots, \frac{\beta}{N - r}}_{N - r \text{ times}},\Bigr\},
    \quad \text{where } \begin{cases}
        \gamma = \sum_{i = 1}^r \lambda_i(\mat{A}), \\
        \beta = \sum_{i = r + 1}^N \lambda_i(\mat{A}).
    \end{cases}
\end{equation*}
It is a two-cluster matrix with a small block of leading eigenvalues and a large block of trailing eigenvalues.
The identification of this two-cluster matrix is not new: the same worst-case matrix was already encountered in the proof of \cite[Thm.~5.1]{CETW23}.
Applying the expected residual function to this matrix completes the proof of \cref{thm:main_bound}.

\subsection{Organization of proof}

Here is the overall organization for the proof of \cref{thm:main_bound}, which will be carried out over the next four subsections.

\paragraph{Part I: Properties of the expected residual function}
The first part of the analysis (\cref{sec:linear_algebra}) establishes concavity and monotonicity properties for the expected residual function.
The concavity and monotonicity properties are more detailed than what was proved in \cite[Thm.~5.1]{CETW23}.

\paragraph{Part II: Accelerated \RPCholesky error}
The second part of the analysis (\cref{sec:majorant}) establishes the following error bound:
\begin{lemma}[Central error bound]
    \label{prop:accelerated}
    Fix a block size $b \geq 1$ and a target psd matrix $\mat{A} \in \complex^{N \times N}$.
    Let $\mat{A}^{(t)}$ denote the residual matrix after applying $t$ rounds of accelerated or block \RPCholesky with blocks of $b$ proposals.
    Also introduce the matrix-valued function
    \begin{equation*}
        \mat{\Phi}_b(\mat{M}) = \underbrace{\phi_{\tr(\mat{M})} \circ \phi_{\tr(\mat{M})} \circ \cdots \circ \phi_{\tr(\mat{M})}}_{b  \text{ times}}(\mat{M}), \quad \text{where} \quad \phi_\alpha(x) = x - \tfrac{1}{\alpha} x^2,
    \end{equation*}
    and let $\mat{\Phi}^{(t)}_b = \mat{\Phi}_b \circ \mat{\Phi}_b \circ \cdots \circ \mat{\Phi}_b$
    denote the $t$-fold composition of $\mat{\Phi}_b$.
    Then, the eigenvalues of $\expect[\mat{A}^{(t)}]$ are bounded by the eigenvalues of $\mat{\Phi}_b^{(t)}(\mat{A})$ as follows:
    \begin{equation*}
        \lambda_i(\expect[\mat{A}^{(t)}]) \leq \lambda_i(\mat{\Phi}_b^{(t)}(\mat{A}))
        \quad \text{for each } i = 1, \ldots, N.
    \end{equation*}
\end{lemma}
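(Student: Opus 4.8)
The plan is to unroll the $bt$ propose–accept–update steps, show that each one acts like a single application of $\phi_\alpha$ in expectation, and then telescope. Fix a round $i$ and condition on the residual $\mat{A}^{(i)}$ at its start; put $\alpha := \tr(\mat{A}^{(i)})$, and note that throughout this round the proposal weights $\propto \mat{A}^{(i)}(j,j)$ and (for accelerated \RPCholesky) the acceptance probabilities are computed from the frozen start-of-round diagonal. Writing $\mat{B}_0 := \mat{A}^{(i)}$ and letting $\mat{B}_\ell$ be the residual after the $\ell$-th proposal has been processed (Schur-complemented out if accepted, left untouched if rejected; for block \RPCholesky every proposal is accepted, and a repeated proposal is idempotent), a one-line computation — the one already displayed in \cref{sec:method} — gives for \emph{accelerated} \RPCholesky the exact identity $\expect[\mat{B}_{\ell+1}\mid\mat{B}_\ell] = \phi_\alpha(\mat{B}_\ell)$, the point being that the acceptance probability $\mat{B}_\ell(j,j)/\mat{A}^{(i)}(j,j)$ exactly cancels the factor $\mat{A}^{(i)}(j,j)$ in the proposal weight, so that each proposal — accepted or not — contributes one $\phi_\alpha$ step. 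For \emph{block} \RPCholesky the same computation, using only that the Schur-complement diagonal decreases ($\mat{B}_\ell(j,j)\le\mat{A}^{(i)}(j,j)$), so that the expected rank-one downdate dominates $\mat{B}_\ell^2/\alpha$ in the semidefinite order, gives the one-sided bound $\expect[\mat{B}_{\ell+1}\mid\mat{B}_\ell]\preceq\phi_\alpha(\mat{B}_\ell)$. In either case $\tr(\mat{B}_{\ell+1})\le\tr(\mat{B}_\ell)$, so all within-round residuals have trace at most $\alpha$.

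Next I would collapse a round. The scalar square is operator convex, so the operator Jensen inequality gives $\expect[\mat{X}^2]\succeq(\expect\mat{X})^2$ and hence $\expect[\phi_\alpha(\mat{X})]\preceq\phi_\alpha(\expect\mat{X})$ for deterministic $\alpha>0$; combined with the previous step and the tower rule, the conditional means $\mat{C}_\ell := \expect[\mat{B}_\ell\mid\mat{A}^{(i)}]$ satisfy $\mat{C}_{\ell+1}\preceq\phi_\alpha(\mat{C}_\ell)$ with $\mat{C}_0=\mat{A}^{(i)}$. Telescoping these inequalities up to $\lambda_j(\mat{C}_b)\le\lambda_j(\phi_\alpha^{\circ b}(\mat{A}^{(i)}))=\lambda_j(\mat{\Phi}_b(\mat{A}^{(i)}))$ is exactly where the refined concavity and monotonicity properties of \cref{sec:linear_algebra} enter: $\phi_\alpha$ is increasing only on $[0,\alpha/2]$, but a single application contracts the spectrum into $[0,\alpha/4]$, so for $\ell\ge1$ the iterates $\mat{C}_\ell$ and $\phi_\alpha^{\circ\ell}(\mat{A}^{(i)})$ all lie in the regime where $\lambda_j(\phi_\alpha(\mat{M}))=\phi_\alpha(\lambda_j(\mat{M}))$ and $\phi_\alpha$ respects the eigenvalue order; an induction then propagates the bound and yields $\lambda_j(\expect[\mat{A}^{(i+1)}\mid\mat{A}^{(i)}])\le\lambda_j(\mat{\Phi}_b(\mat{A}^{(i)}))$ for every $j$.

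Finally I would compose over the $t$ rounds. The conditional bound from the previous paragraph, together with the tower rule, Ky Fan's principle (to move the outer expectation inside partial sums of eigenvalues), and the concavity and monotonicity properties of the per-round map $\mat{\Phi}_b$ established in \cref{sec:linear_algebra} — concavity to absorb the expectation over $\mat{A}^{(i)}$, monotonicity in the appropriate (majorization) order to invoke the induction hypothesis, and the non-increase of the trace to control the changing normalization — let one induct on the round index starting from $\expect[\mat{A}^{(0)}]=\mat{A}$ and conclude the claimed bound $\lambda_j(\expect[\mat{A}^{(t)}])\le\lambda_j(\mat{\Phi}_b^{(t)}(\mat{A}))$.

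The main obstacle is the one flagged in the text: unlike the $b=1$ case treated in \cite{CETW23}, neither $\phi_\alpha$ nor the per-round map $\mat{\Phi}_b$ (for $b\ge2$) is operator monotone, so none of the telescoping above can be carried out in the semidefinite order. The way through it is the pair of structural facts isolated above — each application of $\phi_\alpha$ pushes the spectrum into the interval $[0,\alpha/2]$ on which $\phi_\alpha$ is increasing, and the trace is nonincreasing throughout — which are precisely the ``more subtle'' properties that \cref{sec:linear_algebra} must supply and which let the argument squeeze through at the level of eigenvalues and majorization. The only further care needed is the unified treatment of block and accelerated \RPCholesky, which rests entirely on the semidefinite-order domination in the per-step recursion.
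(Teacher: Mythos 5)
Your within-round analysis (paragraphs one and two) is essentially sound and tracks the paper's argument: the exact identity $\expect[\mat{B}_{\ell+1}\mid\mat{B}_\ell]=\phi_\alpha(\mat{B}_\ell)$ for the accelerated method, the one-sided $\preceq$ bound for the block method (the paper instead uses a pathwise coupling, $\mat{\tilde{A}}^{(1,b)}\preceq\mat{A}^{(1,b)}$ plus Weyl, but your direct per-step domination is a valid alternative), and the observation that after one application of $\phi_\alpha$ the spectrum sits in $[0,\alpha/4]$, where $\phi_\alpha$ is increasing and order-preserving, so the non-operator-monotonicity can be circumvented at the level of eigenvalues. This is a repackaging of the paper's Steps~2--3 of \cref{prop:random} (quadratic forms plus the min--max principle) in the language of Weyl's inequality and spectral functions.

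The gap is in your third paragraph, and it is exactly where the paper's ``most intricate argument'' lives. After collapsing a round you have the \emph{conditional} bound $\lambda_j(\expect[\mat{A}^{(i+1)}\mid\mat{A}^{(i)}])\le\lambda_j(\mat{\Phi}_b(\mat{A}^{(i)}))$, and you propose to take the outer expectation using ``concavity \ldots to absorb the expectation over $\mat{A}^{(i)}$'' together with Ky Fan. But the concavity of $\mat{\Phi}_b$ for $b\ge 2$ is established in \cref{lem:matrix}(b) only on the commuting family of \emph{diagonal} psd matrices; the random residuals $\mat{A}^{(i)}$ are not simultaneously diagonalizable, and $\phi_\alpha^{(b)}$ is a degree-$2^b$ polynomial that is not operator concave, so neither $\expect[\mat{\Phi}_b(\mat{A}^{(i)})]\preceq\mat{\Phi}_b(\expect[\mat{A}^{(i)}])$ nor concavity of the Ky Fan $j$-sums $\mat{M}\mapsto\sum_{m\le j}\lambda_m(\mat{\Phi}_b(\mat{M}))$ (a max over subsets of sums, applied to a non-monotone spectral function) is available. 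The paper avoids this trap by structuring \cref{prop:random} as a statement about a single round applied to a \emph{random} input matrix: the expectation is absorbed after the \emph{first} proposal only, using the genuine operator concavity of the single step $\mat{\Phi}_1$ (\cref{lem:matrix}(c)), and the remaining $b-1$ proposals together with the random normalization $\tr(\mat{A})$ are handled at the scalar level through the joint concavity of $(x,\alpha)\mapsto\phi_\alpha^{(b-1)}(x)$ from \cref{lem:properties}(d), applied to the quadratic forms $\vec{v}^*\mat{A}^{(1,\ell)}\vec{v}$. The between-round composition in the proof of \cref{prop:accelerated} then compares two \emph{fixed} matrices, $\expect[\mat{A}^{(s+1)}]$ and $\mat{\Phi}_b(\expect[\mat{A}^{(s)}])$, by diagonalizing each and invoking only the diagonal-matrix monotonicity of \cref{lem:matrix}(a) plus unitary covariance and Weyl --- no concavity over non-commuting matrices is ever needed. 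To repair your argument you would need to reorganize it along these lines: do the round collapse and the expectation absorption simultaneously, with the expectation entering through $\mat{\Phi}_1$ and through the scalar joint concavity, rather than through the full map $\mat{\Phi}_b$.
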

The proof of \cref{prop:accelerated} is the most intricate argument of the paper.
This lemma shows that the expected trace norm error is bounded by a deterministic quantity: 
\begin{equation*}
    \expect \tr(\mat{A}^{(t)}) \leq \tr(\mat{\Phi}_b^{(t)}(\mat{A})).
\end{equation*}
This upper bound depends only on the eigenvalues of $\mat{A}$, independent of the eigenvectors.

\paragraph{Part III: Permutation averaging lemma}
The next key idea is to average together the leading eigenvalues of $\mat{A}$ and also average together the trailing eigenvalues.
\Cref{sec:permutation} proves the following helpful lemma, which shows that the averaging can only make the error worse:
\begin{lemma}[Permutation averaging] \label{lem:permutation}
    For any diagonal, psd matrix $\mat{\Lambda} \in \mathbb{R}^{N \times N}$ and any random permutation $\mat{P} \in \mathbb{R}^{N \times N}$,
    \begin{equation*}
        \tr(\mat{\Phi}_b^{(t)}(\mat{\Lambda}))
        \leq \tr(\mat{\Phi}_b^{(t)}(\expect[\mat{P} \mat{\Lambda} \mat{P}^*])).
    \end{equation*}
\end{lemma}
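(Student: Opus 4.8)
The goal is to show that replacing a diagonal psd matrix $\mat{\Lambda}$ by the average $\expect[\mat{P}\mat{\Lambda}\mat{P}^*]$ over a random permutation can only increase $\tr(\mat{\Phi}_b^{(t)}(\cdot))$. The natural route is to combine concavity of the map $\mat{M}\mapsto\tr(\mat{\Phi}_b^{(t)}(\mat{M}))$ on psd matrices with Jensen's inequality and the fact that trace of $\mat{\Phi}_b^{(t)}$ is unitarily invariant. Concretely, the plan is: (i) observe that $\tr(\mat{\Phi}_b^{(t)}(\mat{M}))$ depends only on the eigenvalues of $\mat{M}$, since $\phi_\alpha$ with $\alpha = \tr(\mat{M})$ is applied via spectral calculus and the trace is unitarily invariant — hence $\tr(\mat{\Phi}_b^{(t)}(\mat{P}\mat{\Lambda}\mat{P}^*)) = \tr(\mat{\Phi}_b^{(t)}(\mat{\Lambda}))$ for every fixed permutation $\mat{P}$; (ii) prove that $\mat{M}\mapsto\tr(\mat{\Phi}_b^{(t)}(\mat{M}))$ is concave on the psd cone (this should follow from the concavity/monotonicity properties of the scalar-and-matrix function $\phi_\alpha$ established in Part I, \cref{sec:linear_algebra}, since $\tr\mat{M}$ is linear so the trace parameter $\alpha$ causes no trouble, and composition and trace preserve the relevant concavity); (iii) apply Jensen: $\tr(\mat{\Phi}_b^{(t)}(\expect[\mat{P}\mat{\Lambda}\mat{P}^*])) \geq \expect[\tr(\mat{\Phi}_b^{(t)}(\mat{P}\mat{\Lambda}\mat{P}^*))] = \expect[\tr(\mat{\Phi}_b^{(t)}(\mat{\Lambda}))] = \tr(\mat{\Phi}_b^{(t)}(\mat{\Lambda}))$, using (i) in the last two equalities.

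The step I expect to be the main obstacle is (ii), establishing concavity of $\mat{M}\mapsto\tr(\mat{\Phi}_b^{(t)}(\mat{M}))$. Two subtleties arise. First, $\mat{\Phi}_b$ is a $b$-fold self-composition of $\phi_\alpha$, and compositions of concave functions are not concave in general — one needs the inner functions to be concave \emph{and} monotone increasing, and as the excerpt warns, $\phi_\alpha$ is only monotone on $[0,\alpha/2]$ for scalars (and fails matrix monotonicity more broadly). So the argument must either restrict to the regime where all relevant eigenvalues stay in $[0,\tr(\mat{M})/2]$ — which does hold here since any eigenvalue of a psd matrix $\mat{M}$ is at most $\tr(\mat{M})$, and one step of $\phi_\alpha$ with $\alpha=\tr(\mat{M})$ maps $[0,\alpha]$ into $[0,\alpha/2]$... wait, $\phi_\alpha(\alpha) = 0$ and $\phi_\alpha$ peaks at $x=\alpha/2$ with value $\alpha/4$, so after one application all eigenvalues lie in $[0,\alpha/4]\subset[0,\alpha/2]$, and thereafter they only decrease while the trace parameter only decreases too, so monotonicity is available for all but possibly the first application — or it must invoke exactly the refined concavity/monotonicity lemmas promised in \cref{sec:linear_algebra}. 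Second, the trace parameter $\alpha=\tr(\mat{M})$ varies with $\mat{M}$, so one is really dealing with a jointly-defined family; since $\tr(\mat{M})$ is linear in $\mat{M}$ this should be benign, but it must be handled carefully when quoting the Part I results, which presumably are stated with this dependence in mind.

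An alternative, perhaps cleaner, framing avoids a direct concavity statement by working with majorization: the vector of eigenvalues of $\expect[\mat{P}\mat{\Lambda}\mat{P}^*]$ is majorized by (indeed, is a convex combination of permutations of) the vector $\diag(\mat{\Lambda})$, and one would show that $\vec{\lambda}\mapsto\tr(\mat{\Phi}_b^{(t)}(\diag(\vec{\lambda})))$ is Schur-concave. Schur-concavity would follow from symmetry (obvious) plus concavity in each pair of coordinates with the others fixed, reducing the matrix question to a two-variable scalar question about $\phi_\alpha$ that is likely already packaged in \cref{sec:linear_algebra}. Either way, the crux is transferring the scalar concavity/monotonicity of $\phi_\alpha$ through the $b$-fold and $t$-fold compositions and through the trace, and the writeup should lean directly on the Part I lemmas rather than re-deriving these properties here.
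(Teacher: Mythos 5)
Your proposal is correct and follows essentially the same route as the paper: permutation covariance of $\mat{\Phi}_b^{(t)}$ gives $\tr(\mat{\Phi}_b^{(t)}(\mat{P}\mat{\Lambda}\mat{P}^*)) = \tr(\mat{\Phi}_b^{(t)}(\mat{\Lambda}))$, and then Jensen's inequality applied via the concavity established in Part I (\cref{lem:matrix}b), together with trace monotonicity of the psd order, finishes the argument. The concavity worry in your step (ii) is resolved exactly as you anticipate: since $\mat{\Lambda}$, each $\mat{P}\mat{\Lambda}\mat{P}^*$, and $\expect[\mat{P}\mat{\Lambda}\mat{P}^*]$ are all diagonal, only concavity on the \emph{diagonal} psd cone is needed, which is precisely what \cref{lem:matrix}(b) provides.
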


\paragraph{Part IV: Main error bound}
The last part of the analysis in \cref{sec:dynamical} examines the worst-case matrix $\mat{A}$ and obtains a simple recursive formula for $\tr(\mat{\Phi}_b^{(t)}(\mat{A}))$ in terms of $t$.
Bounding the recursion leads to the proof of the main result, \cref{thm:main_bound}.

\subsection{Part I: Properties of the expected residual function} \label{sec:linear_algebra}

This section establishes concavity and monotonicity properties for the expected residual function.
The first lemma considers the application of this function to scalars:

\begin{lemma}[Expected residual properties] \label{lem:properties}
    Let $\phi_{\alpha}: [0, \alpha] \rightarrow [0, \alpha]$ be the function $\phi_{\alpha}(x) = x - \frac{1}{\alpha} x^2$, and let
    \begin{equation*}
        \phi_{\alpha}^{(b)} = \underbrace{\phi_{\alpha} \circ \phi_{\alpha} \circ \cdots \circ \phi_{\alpha}}_{b  \text{ times}}
    \end{equation*}
    denote the $b$-fold composition of $\phi_{\alpha}$. 
    Then the following holds for each $b \geq 1$:
    \begin{itemize}
        \item[(a)] \textbf{Uniform boundedness:} $\phi_{\alpha}^{(b)} \le \alpha / 4$.
        \item[(b)] \textbf{Monotonicity in $x$ on $[0,\alpha/2]$:} $\phi_{\alpha}^{(b)}(x) \leq \phi_{\alpha}^{(b)}(x')$ if $x \le x' \le \alpha/2$.
        \item[(c)] \textbf{Joint positive homogeneity:} $\phi_{\beta \cdot \alpha}^{(b)}(\beta \cdot x) 
            = \beta \cdot \phi_{\alpha}^{(b)}(x)$ for $\beta > 0$.
        \item[(d)] \textbf{Joint concavity:} The mapping $(x, \alpha) \mapsto \phi_{\alpha}^{(b)}(x)$ is concave:
        \begin{equation*}
            \theta \phi_{\alpha_1}^{(b)}(x_1) + (1 - \theta) \phi_{\alpha_2}^{(b)}(x_2) \leq \phi_{\theta \alpha_1 + (1-\theta) \alpha_2}^{(b)}(\theta x_1 + (1 - \theta) x_2)
            \quad \text{for} \quad
            \theta \in (0, 1).
        \end{equation*}
        \item[(e)] \textbf{Upper bound:} $\phi_{\alpha}^{(b)}(x) \le 1 / (b\alpha^{-1} + x^{-1})$.
    \end{itemize}
\end{lemma}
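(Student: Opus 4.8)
The plan is to establish the five claims in the order (c), (a), (b), (e), (d), since each later part uses the earlier ones. Part (c) is immediate: $\phi_{\beta\alpha}(\beta x) = \beta x - (\beta\alpha)^{-1}(\beta x)^2 = \beta(x - \alpha^{-1}x^2) = \beta\,\phi_\alpha(x)$, and a short induction on the number of compositions propagates this identity to $\phi^{(b)}$. For part (a), first note that $\phi_\alpha$ maps $[0,\alpha]$ into itself, since $\phi_\alpha(x) = x(1 - x/\alpha) \ge 0$ on that interval, and elementary calculus gives $\max_{x\in[0,\alpha]}\phi_\alpha(x) = \phi_\alpha(\alpha/2) = \alpha/4$. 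Writing $\phi_\alpha^{(b)} = \phi_\alpha \circ \phi_\alpha^{(b-1)}$ and observing that $\phi_\alpha^{(b-1)}$ has range contained in $[0,\alpha]$ shows $\phi_\alpha^{(b)} \le \alpha/4$ for every $b \ge 1$.

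Part (b) is an induction on $b$. Since $\phi_\alpha'(x) = 1 - 2x/\alpha \ge 0$ on $[0,\alpha/2]$, the map $\phi_\alpha$ is nondecreasing there, which settles $b=1$. For $b \ge 2$, the inductive hypothesis gives monotonicity of $\phi_\alpha^{(b-1)}$ on $[0,\alpha/2]$, while part (a) confines its values to $[0,\alpha/4]\subseteq[0,\alpha/2]$; composing with the outer $\phi_\alpha$, which is nondecreasing on that range, yields the claim. Part (e) follows from reciprocal superadditivity. Set $y_j \coloneqq \phi_\alpha^{(j)}(x)$, so $y_0 = x$ and $y_{j+1} = y_j(1 - y_j/\alpha)$. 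Using $1/(1-u) \ge 1 + u$ with $u = y_j/\alpha \in [0,1)$ gives $1/y_{j+1} \ge (1/y_j)(1 + y_j/\alpha) = 1/y_j + 1/\alpha$ whenever $y_j \in (0,\alpha)$; the cases $x \in \{0,\alpha\}$ make the target inequality an equality or trivially true. Iterating, $1/y_b \ge 1/x + b/\alpha$, which rearranges to $\phi_\alpha^{(b)}(x) \le 1/(b\alpha^{-1} + x^{-1})$.

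The main obstacle is part (d), the joint concavity of $(x,\alpha)\mapsto\phi_\alpha^{(b)}(x)$. For $b=1$ this reduces to concavity of $(x,\alpha)\mapsto x - x^2/\alpha$ on $\real\times\real_{>0}$: the function $x^2/\alpha$ is the standard jointly convex quadratic-over-linear map (equivalently, the Hessian of $-x^2/\alpha$ has nonpositive trace and vanishing determinant, hence is negative semidefinite), and adding the linear term $x$ preserves concavity. For $b \ge 2$ I would argue by induction. Fix $(x_1,\alpha_1),(x_2,\alpha_2)$ and $\theta\in(0,1)$, put $(\bar x,\bar\alpha) \coloneqq \theta(x_1,\alpha_1)+(1-\theta)(x_2,\alpha_2)$, and set $y_i \coloneqq \phi_{\alpha_i}^{(b-1)}(x_i)$ and $\bar y \coloneqq \theta y_1 + (1-\theta)y_2$. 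The inductive hypothesis gives $\phi_{\bar\alpha}^{(b-1)}(\bar x) \ge \bar y$; part (a) places both $\phi_{\bar\alpha}^{(b-1)}(\bar x)$ and $\bar y$ in $[0,\bar\alpha/4]$, where $\phi_{\bar\alpha}$ is nondecreasing, so $\phi_{\bar\alpha}^{(b)}(\bar x) = \phi_{\bar\alpha}\bigl(\phi_{\bar\alpha}^{(b-1)}(\bar x)\bigr) \ge \phi_{\bar\alpha}(\bar y)$; and the $b=1$ joint concavity gives $\phi_{\bar\alpha}(\bar y) \ge \theta\,\phi_{\alpha_1}(y_1) + (1-\theta)\,\phi_{\alpha_2}(y_2) = \theta\,\phi_{\alpha_1}^{(b)}(x_1) + (1-\theta)\,\phi_{\alpha_2}^{(b)}(x_2)$. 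Chaining these three inequalities proves (d) for $b$. The delicate feature of this step is that $\phi_\alpha$ is monotone only on $[0,\alpha/2]$, so the argument genuinely relies on the a priori bound (a) to trap every iterate inside the monotone region before either monotonicity or concavity of the outer map can be invoked.
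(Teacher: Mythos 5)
Your proof is correct, and the overall strategy matches the paper's: each part is an induction on $b$, with the boundedness property (a) used to trap every iterate in $[0,\alpha/4]\subseteq[0,\alpha/2]$ so that the monotonicity and concavity of the outer map can be invoked --- which, as you note, is the one genuinely delicate point. Two parts are organized differently from the paper, though equivalently in substance. For (d), the paper first proves concavity of the univariate map $\phi_1^{(b)}$ by induction and then obtains joint concavity from the homogeneity (c), i.e., by recognizing $\phi_\alpha^{(b)}(x)=\alpha\,\phi_1^{(b)}(x/\alpha)$ as the perspective of a concave function; you instead establish joint concavity of the base case $b=1$ directly via the quadratic-over-linear function $x^2/\alpha$ and run the induction on the two-variable map. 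Your route skips the perspective step at the cost of carrying both variables through the induction; the paper's route keeps the induction one-dimensional. For (e), you telescope the reciprocal recursion $1/y_{j+1}\ge 1/y_j+1/\alpha$ rather than inducting on the closed-form bound $1/(b\alpha^{-1}+x^{-1})$ as the paper does; your version is arguably cleaner since it needs only the elementary inequality $1/(1-u)\ge 1+u$ and does not even require the monotonicity property (b), and you correctly dispose of the boundary cases $x\in\{0,\alpha\}$.
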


\begin{proof} See \cref{sec:deferred-proofs}. \end{proof}

The next lemma shows that the expected residual function continues to exhibit concavity and monotonicity when it is applied to certain classes of matrices.

\begin{lemma}[Expected residual properties for matrices] \label{lem:matrix}
Define
\begin{equation*}
    \mat{\Phi}_b(\mat{A}) = \underbrace{\phi_{\tr(\mat{A})} \circ \phi_{\tr(\mat{A})} \circ \cdots \circ \phi_{\tr(\mat{A})}}_{b  \text{ times}}(\mat{A}), \quad \text{where} \quad \phi_\alpha(x) = x - \tfrac{1}{\alpha} x^2.
\end{equation*}
Let $\mat{\Phi}^{(t)}_b = \mat{\Phi}_b \circ \mat{\Phi}_b \circ \cdots \circ \mat{\Phi}_b$
denote the $t$-fold composition of $\mat{\Phi}_b$.
Then the following holds for any $t \geq 1$:
\begin{itemize}
    \item[(a)] $\mat{\Phi}^{(t)}_b$ is non-decreasing on the space of diagonal, psd matrices:
    \begin{equation*}
        \mat{\Phi}^{(t)}_b(\mat{\Lambda}_1) \preceq \mat{\Phi}^{(t)}_b(\mat{\Lambda}_2)
        \quad \text{if } \mat{\Lambda}_1 \preceq \mat{\Lambda}_2.
    \end{equation*}
    \item[(b)] $\mat{\Phi}^{(t)}_b$ is concave on the space of diagonal, psd matrices:
    \begin{equation*}
        \theta \mat{\Phi}^{(t)}_b(\mat{\Lambda}_1) 
        + (1 - \theta) \mat{\Phi}^{(t)}_b(\mat{\Lambda}_2)
        \preceq
        \mat{\Phi}^{(t)}_b( \theta \mat{\Lambda}_1 + (1-\theta) \mat{\Lambda}_2
        )
        \quad \text{for } \theta \in (0, 1).
    \end{equation*}
    \item[(c)] $\mat{\Phi}_1$ is concave on the space of all psd matrices:
    \begin{equation*}
        \theta \mat{\Phi}_1(\mat{A}_1) 
        + (1 - \theta) \mat{\Phi}_1(\mat{A}_2)
        \preceq
        \mat{\Phi}_1( \theta \mat{A}_1 + (1-\theta) \mat{A}_2
        )
        \quad \text{for } \theta \in (0, 1).
    \end{equation*}
\end{itemize}
\end{lemma}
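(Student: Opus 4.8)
The three parts require two rather different arguments. Parts (a) and (b) concern diagonal matrices, on which $\mat{\Phi}_b$ acts coordinatewise: if $\mat{\Lambda} = \diag(\lambda_1,\dots,\lambda_N)$ then $\mat{\Phi}_b(\mat{\Lambda}) = \diag\bigl(\phi_\alpha^{(b)}(\lambda_1),\dots,\phi_\alpha^{(b)}(\lambda_N)\bigr)$ with $\alpha = \tr(\mat{\Lambda})$. The subtlety is that, with $\alpha$ fixed, $x\mapsto\phi_\alpha^{(b)}(x)$ is not monotone on $[\alpha/2,\alpha]$, so part~(a) does not follow from \cref{lem:properties}(b) directly. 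The fix is that increasing one eigenvalue also increases the trace, an effect I will encode as a superadditivity property of the two-variable function $F(x,\alpha) \coloneqq \phi_\alpha^{(b)}(x)$. Part~(c) is of a different flavor, since it permits non-diagonal matrices, and it reduces to a matrix Cauchy--Schwarz identity — equivalently, the joint operator convexity of the perspective map $(\mat{X},\tau)\mapsto\tfrac{1}{\tau}\mat{X}^2$.

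For part~(a), the plan is to first observe that $F$, defined on the convex cone $\mathcal{C} \coloneqq \{(x,\alpha) : 0\le x\le\alpha\}$, is superadditive: combining positive homogeneity (\cref{lem:properties}(c)) with joint concavity (\cref{lem:properties}(d)) gives $F(u+v) = 2F\bigl(\tfrac12(u+v)\bigr) \ge F(u)+F(v)$ for $u,v\in\mathcal{C}$, and $F\ge 0$ on $\mathcal{C}$ because $\phi_\alpha^{(b)}$ maps $[0,\alpha]$ into itself. Then, given diagonal psd $\mat{\Lambda}_1 \preceq \mat{\Lambda}_2$, set $\mat{D} \coloneqq \mat{\Lambda}_2 - \mat{\Lambda}_1 \succeq \mat{0}$; for each index $i$ the pairs $\bigl((\mat{\Lambda}_1)_{ii},\tr\mat{\Lambda}_1\bigr)$ and $\bigl(\mat{D}_{ii},\tr\mat{D}\bigr)$ both lie in $\mathcal{C}$, so superadditivity together with $F\ge 0$ yields $\bigl(\mat{\Phi}_b(\mat{\Lambda}_2)\bigr)_{ii} \ge \bigl(\mat{\Phi}_b(\mat{\Lambda}_1)\bigr)_{ii}$; hence $\mat{\Phi}_b(\mat{\Lambda}_1)\preceq\mat{\Phi}_b(\mat{\Lambda}_2)$ (the degenerate case $\mat{\Lambda}_1=\mat{0}$ is trivial). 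Since $\mat{\Phi}_b$ maps diagonal psd matrices to diagonal psd matrices, composing this monotone step $t$ times gives part~(a).

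Part~(b) is then quick. Applying the joint concavity of \cref{lem:properties}(d) coordinatewise, and using $\theta\tr(\mat{\Lambda}_1) + (1-\theta)\tr(\mat{\Lambda}_2) = \tr(\theta\mat{\Lambda}_1 + (1-\theta)\mat{\Lambda}_2)$, shows $\mat{\Phi}_b$ is itself concave on diagonal psd matrices. The general $t$ then follows by induction: $\mat{\Phi}_b$ is concave and, by part~(a), non-decreasing, so $\mat{\Phi}_b^{(t)} = \mat{\Phi}_b\circ\mat{\Phi}_b^{(t-1)}$ is concave by the usual chaining (concavity of the inner map, then monotonicity, then concavity of the outer map). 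For part~(c), expanding $\mat{\Phi}_1(\mat{A}) = \mat{A} - \tfrac{1}{\tr\mat{A}}\mat{A}^2$ and cancelling the affine part reduces the claim to
\begin{equation*}
    \frac{\bigl(\theta\mat{A}_1 + (1-\theta)\mat{A}_2\bigr)^2}{\theta\tr(\mat{A}_1) + (1-\theta)\tr(\mat{A}_2)} \preceq \theta\,\frac{\mat{A}_1^2}{\tr(\mat{A}_1)} + (1-\theta)\,\frac{\mat{A}_2^2}{\tr(\mat{A}_2)},
\end{equation*}
where the cases with $\mat{A}_1$ or $\mat{A}_2$ equal to $\mat{0}$ follow from homogeneity and the convention $\mat{\Phi}_1(\mat{0}) = \mat{0}$. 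Writing $c_1 = \sqrt{\theta\tr(\mat{A}_1)}$, $c_2 = \sqrt{(1-\theta)\tr(\mat{A}_2)}$, $\mat{Y}_1 = (c_1/\tr(\mat{A}_1))\mat{A}_1$, and $\mat{Y}_2 = (c_2/\tr(\mat{A}_2))\mat{A}_2$, this becomes $(c_1\mat{Y}_1 + c_2\mat{Y}_2)^2 \preceq (c_1^2 + c_2^2)(\mat{Y}_1^2 + \mat{Y}_2^2)$, which holds because the difference equals $(c_1\mat{Y}_2 - c_2\mat{Y}_1)^2 \succeq \mat{0}$.

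I expect part~(a) to be the crux: the point the paper flags — that $\mat{\Phi}_b$ is \emph{not} matrix-monotone on general psd inputs — means one has to isolate exactly why the diagonal case survives, namely the coupling of eigenvalue growth with trace growth, and the superadditivity of $F$ is the clean device that exploits it. Part~(c) hinges on spotting the change of variables that exposes the Cauchy--Schwarz identity, but is short thereafter, and part~(b) is routine once part~(a) is in hand.
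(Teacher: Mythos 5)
Your proposal is correct and follows essentially the same route as the paper: part~(a) via superadditivity of $(x,\alpha)\mapsto\phi_\alpha^{(b)}(x)$ obtained from positive homogeneity plus joint concavity together with nonnegativity (the paper writes this as the chain $\tfrac12\phi^{(b)}_{\tr\mat{\Lambda}_1}(\mat{\Lambda}_1)\preceq\tfrac12\phi^{(b)}_{\tr\mat{\Lambda}_1}(\mat{\Lambda}_1)+\tfrac12\phi^{(b)}_{\tr(\mat{\Lambda}_2-\mat{\Lambda}_1)}(\mat{\Lambda}_2-\mat{\Lambda}_1)\preceq\tfrac12\phi^{(b)}_{\tr\mat{\Lambda}_2}(\mat{\Lambda}_2)$, which is the same argument), part~(b) by the standard concave-composed-with-monotone-concave induction, and part~(c) by exhibiting the concavity defect as a matrix square — your Cauchy--Schwarz substitution is algebraically identical to the paper's perspective-function identity, since $(c_1\mat{Y}_2-c_2\mat{Y}_1)^2 = \theta(1-\theta)\tr(\mat{A}_1)\tr(\mat{A}_2)\bigl[\mat{A}_1/\tr(\mat{A}_1)-\mat{A}_2/\tr(\mat{A}_2)\bigr]^2$.
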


\begin{proof} See \cref{sec:deferred-proofs}. \end{proof}

\subsection{Part II: Accelerated \RPCholesky error} \label{sec:majorant}

This section analyzes the error of accelerated \RPCholesky and proves \cref{prop:accelerated}.
The technical crux is the following proposition, which bounds the error of accelerated \RPCholesky or block \RPCholesky applied to a random psd matrix.

\begin{proposition}[Block and accelerated \RPCholesky on a random matrix]
    \label{prop:random}
    Consider a random psd matrix $\mat{A} \in \complex^{N \times N}$, and let $\mat{A}^{(1, b)}$ denote the residual matrix after applying one round of accelerated \RPCholesky or block \RPCholesky to $\mat{A}$ with $b \geq 1$ proposals.
    Also introduce the matrix-valued function
    \begin{equation*}
        \mat{\Phi}_b(\mat{M}) = \underbrace{\phi_{\tr(\mat{M})} \circ \phi_{\tr(\mat{M})} \circ \cdots \circ \phi_{\tr(\mat{M})}}_{b  \text{ times}}(\mat{M}), \quad \text{where} \quad \phi_\alpha(x) = x - \tfrac{1}{\alpha} x^2,
    \end{equation*}
    Then the eigenvalues of $\expect[\mat{A}^{(1, b)}]$ are bounded by the eigenvalues of $\mat{\Phi}_b^{(1, b)}(\expect[\mat{A}])$:
    \begin{equation*}
        \lambda_i(\expect[\mat{A}^{(1, b)}]) \leq \lambda_i(\mat{\Phi}_b(\expect[\mat{A}]))
        \quad \text{for each } i = 1, \ldots, N.
    \end{equation*}
\end{proposition}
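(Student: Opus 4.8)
The plan is to reduce the statement about a random matrix $\mat{A}$ to the corresponding statement about a deterministic matrix, and then to establish the deterministic statement by tracking the expected residual one proposal at a time. Let me think about how to organize this.

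First, the deterministic core. Suppose $\mat{A}$ is a fixed psd matrix. Recall from the derivation in \cref{sec:method} that if $\mat{A}^{(1,i)}$ denotes the residual after processing $i$ proposals in one round (where each proposal is drawn from the distribution $\mathbb{P}\{s' = j\} = \mat{A}(j,j)/\tr(\mat{A})$ and accepted/rejected in the manner described), then
\begin{equation*}
    \expect[\mat{A}^{(1,i+1)} \mid \mat{A}^{(1,i)}] = \phi_{\tr(\mat{A})}(\mat{A}^{(1,i)}) = \mat{A}^{(1,i)} - \tfrac{1}{\tr(\mat{A})}(\mat{A}^{(1,i)})^2.
\end{equation*}
The crucial observation is that the parameter $\alpha = \tr(\mat{A})$ is \emph{fixed} throughout the round — it does not update as pivots are accepted, since the proposal distribution is frozen at the start of the round. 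So I would like to apply $\phi_{\tr(\mat{A})}$ repeatedly $b$ times and conclude $\expect[\mat{A}^{(1,b)}] \preceq \phi_{\tr(\mat{A})}^{(b)}(\mat{A}) = \mat{\Phi}_b(\mat{A})$, using Jensen's inequality (concavity of $\phi_{\tr(\mat{A})}$ on psd matrices, which is \cref{lem:matrix}(c)) together with monotonicity to propagate the bound through the composition. This is where the technical subtlety flagged in the excerpt enters: $\mat{\Phi}_1 = \phi_{\tr(\mat{A})}$ is concave on all psd matrices, but it is \emph{not} matrix-monotone on all psd matrices, so the naive induction ``$\expect[\mat{A}^{(1,i+1)}] \preceq \expect[\phi(\mat{A}^{(1,i)})] \preceq \phi(\expect[\mat{A}^{(1,i)}])$'' is not justified in the psd order directly. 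The fix — and this is what I expect to be the main obstacle — is to pass to eigenvalues: one should track $\lambda_i(\expect[\mat{A}^{(1,i)}])$ rather than the full psd ordering, exploiting that the residuals $\mat{A}^{(1,i)}$ all commute (they share the eigenbasis of $\mat{A}$ on the relevant subspace, since each update subtracts a rank-one term built from a column of the current residual — actually one must verify that the accepted columns keep everything within a common invariant structure). More carefully, I would diagonalize and use that for commuting psd matrices the scalar statements of \cref{lem:properties}(a),(b),(d) (uniform boundedness, monotonicity on $[0,\tr(\mat{A})/2]$, concavity) suffice to run the induction eigenvalue-by-eigenvalue, after first arguing that the eigenvalues of every residual lie in $[0,\tr(\mat{A})/2]$ (indeed in $[0,\tr(\mat{A})/4]$ eventually by part (a), and trivially $\lambda_1(\mat{A}^{(1,i)}) \le \tr(\mat{A}^{(1,i)}) \le \tr(\mat{A})$, which is not quite $\le \tr(\mat{A})/2$ — so one really does need the finer majorization argument, presumably the content imported from \cite[Thm.~5.1]{CETW23} and strengthened in \cref{sec:linear_algebra}). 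So the deterministic conclusion is $\lambda_i(\expect[\mat{A}^{(1,b)}]) \le \lambda_i(\mat{\Phi}_b(\mat{A}))$ for all $i$.

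Second, I would promote this to the random setting. Condition on $\mat{A}$: by the deterministic step, $\lambda_i(\expect[\mat{A}^{(1,b)} \mid \mat{A}]) \le \lambda_i(\mat{\Phi}_b(\mat{A}))$ for every $i$, which (again via commuting eigenbases, or via a majorization/trace-of-convex-function argument) is equivalent to the psd-type relation that lets us compare traces of monotone concave spectral functions. Taking expectations over $\mat{A}$ and using the tower property, $\expect[\mat{A}^{(1,b)}] = \expect[\expect[\mat{A}^{(1,b)}\mid \mat{A}]]$; I then need $\lambda_i(\expect[\expect[\mat{A}^{(1,b)}\mid\mat{A}]]) \le \lambda_i(\expect[\mat{\Phi}_b(\mat{A})]) \le \lambda_i(\mat{\Phi}_b(\expect[\mat{A}]))$. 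The first of these two inequalities follows because eigenvalue-wise domination of expectations is preserved under further averaging (this uses that $\lambda_i(\mat{X}+\mat{Y}) \le \lambda_j(\mat{X}) + \lambda_{i-j+1}(\mat{Y})$-type Weyl bounds, or more cleanly, that for each $i$ the map $\mat{M}\mapsto \lambda_i(\mat{M})$ combined with the domination gives $\sum$-of-top-eigenvalues domination, which is additive). The second inequality is exactly Jensen's inequality for the matrix concave function $\mat{\Phi}_b$ on diagonal/psd matrices — but here I must be careful: $\mat{\Phi}_b$ depends on $\mat{A}$ through $\tr(\mat{A})$ as well, so I should reduce to a common parameter. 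The clean route: first reduce to the case where $\mat{A}$ is supported on a fixed eigenbasis (by an averaging-over-unitaries or permutation argument, or by noting the claim is invariant under simultaneous conjugation), so that $\mat{\Phi}_b$ acts as a genuine function of the diagonal; then $\tr(\mat{A})$ is a linear (hence affine) function of the diagonal, so $\phi_{\tr(\mat{A})}^{(b)}(\mat{A})$ as a function of the diagonal vector is covered by the \emph{joint} concavity statement \cref{lem:properties}(d) and its matrix version \cref{lem:matrix}(b), and Jensen applies to give $\expect[\mat{\Phi}_b(\mat{A})] \preceq \mat{\Phi}_b(\expect[\mat{A}])$, hence the eigenvalue domination.

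The step I expect to be genuinely delicate is making the ``commuting eigenbasis / pass to eigenvalues'' reduction rigorous, because the residual $\mat{A}^{(1,i)}$ is produced by a \emph{data-dependent random} sequence of rank-one downdates, and it is not a priori obvious that $\expect[\mat{A}^{(1,i)}]$ and $\mat{\Phi}_b(\mat{A})$ are simultaneously diagonalizable — in general they are not, and the bound must be interpreted purely at the level of eigenvalue lists. So the honest argument should avoid claiming commutativity and instead establish, by induction on $i$, the weaker spectral statement: for each $i$, and each $k$, $\sum_{j=1}^k \lambda_j(\expect[\mat{A}^{(1,i)}]) \le \sum_{j=1}^k \lambda_j(\phi_{\tr(\mat{A})}^{(i)}(\mat{A}))$ (a majorization), using at the inductive step (i) Jensen with \cref{lem:matrix}(c) to handle the expectation over the $(i{+}1)$st proposal, and (ii) the monotonicity/concavity of the scalar functions $\phi^{(b)}$ applied to the ordered eigenvalues via \cref{lem:properties}(b),(d), together with a Ky~Fan / Lidskii-type argument to pass the eigenvalue-wise scalar bound through to the majorization of the matrices. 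I would also need the a priori bound that $\lambda_1$ of every residual is at most $\tr(\mat{A})/2$, which should come from part (a) of \cref{lem:properties} after one step (since $\phi_\alpha(x) \le \alpha/4$ for all $x\in[0,\alpha]$, every residual after the first proposal has all eigenvalues $\le \tr(\mat{A})/4 \le \tr(\mat{A})/2$, and the first proposal is harmless because $\phi_\alpha$ on $[0,\alpha]$ maps into $[0,\alpha/4]$ already). Assembling these pieces — the deterministic per-proposal recursion, the a priori spectral bound, the Jensen steps at the two levels of randomness, and the permutation/unitary reduction to diagonal matrices — yields the claimed eigenvalue domination $\lambda_i(\expect[\mat{A}^{(1,b)}]) \le \lambda_i(\mat{\Phi}_b(\expect[\mat{A}]))$.
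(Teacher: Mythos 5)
You have correctly identified the central obstacle --- $\phi_\alpha$ is concave on psd matrices but not matrix-monotone, so the naive induction $\expect[\mat{A}^{(1,i+1)}] \preceq \phi(\expect[\mat{A}^{(1,i)}]) \preceq \cdots$ cannot be chained in the psd order --- and several of the right ingredients (the per-proposal recursion, Jensen via \cref{lem:matrix}(c), joint concavity for the outer expectation, and the boundedness property that puts everything into the range where scalar monotonicity holds). But neither of the two routes you actually sketch for overcoming the obstacle works, and the key idea of the paper's proof is missing. Your first route (simultaneous diagonalization of the residuals) you yourself concede is false in general. Your second route replaces the claim with a Ky~Fan majorization $\sum_{j\le k}\lambda_j(\expect[\mat{A}^{(1,i)}]) \le \sum_{j\le k}\lambda_j(\phi^{(i)}(\mat{A}))$, which is strictly weaker than the eigenvalue-by-eigenvalue domination the proposition asserts (and which \cref{prop:accelerated} genuinely needs, since its proof feeds the ordered eigenvalue matrices into the monotonicity of $\mat{\Phi}_b^{(t-s-1)}$ on diagonal matrices); moreover, the inductive step would require that the increasing concave map $\phi_\alpha$ preserve weak submajorization, which is the property enjoyed by increasing \emph{convex} functions, not concave ones. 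Finally, your reduction of the random-$\mat{A}$ case ``to a fixed eigenbasis'' is not available for a general random psd matrix, and \cref{lem:matrix}(b) only applies to diagonal matrices.

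The missing idea is scalarization via quadratic forms. The paper fixes a unit vector $\vec{v}$ and tracks the scalar $\vec{v}^*\mat{A}^{(1,\ell)}\vec{v}$: the Cauchy--Schwarz inequality $\vec{v}^*\mat{M}^2\vec{v} \ge (\vec{v}^*\mat{M}\vec{v})^2$ gives $\vec{v}^*\,\phi_{\tr\mat{A}}(\mat{A}^{(1,\ell)})\,\vec{v} \le \phi_{\tr\mat{A}}(\vec{v}^*\mat{A}^{(1,\ell)}\vec{v})$, after which only the \emph{scalar} monotonicity, boundedness, and joint concavity of \cref{lem:properties} are needed to chain the $b$ proposals and to take the expectation over $\mat{A}$ (joint concavity in $(x,\alpha)$ with $\alpha=\tr(\mat{A})$ handles the random trace with no eigenbasis assumption). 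The eigenvalue-wise conclusion is then recovered by the min--max principle, restricting $\vec{v}$ to the span of the trailing eigenvectors of $\mat{\Phi}_1(\expect[\mat{A}])$ and using that $\mat{\Phi}_b(\expect[\mat{A}])$ is a spectral function of $\mat{\Phi}_1(\expect[\mat{A}])$ and hence shares its eigenvectors. You would also need to add the paper's Step~1 (block \RPCholesky accepts a superset of the pivots, so its residual is dominated in the psd order by the accelerated residual, and Weyl's inequality transfers this to eigenvalues) to cover the block variant, which your recursion as written addresses only for the accelerated method.
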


The proof of this result relies on the min--max principle \cite[Thm.~8.9]{Zha11}:

\begin{lemma}[Min--max principle]
   The eigenvalues of a Hermitian matrix $\mat{M} \in \complex^{N \times N}$ satisfy
    \begin{equation*}
        \lambda_i(\mat{M}) =
        \min_{\operatorname{dim}(\mathcal{S}) = N - i + 1}\,
        \max_{\vec{v} \in \mathcal{S},\,
        \lVert \vec{v} \rVert = 1} \vec{v}^* \mat{M} \vec{v},
    \end{equation*}
    where the maximum is taken over all $(N-i+1)$-dimensional linear subspaces $\mathcal{S}$.
    The optimal $\mathcal{S}$ is spanned by the eigenvectors of $\mat{M}$ with the $N - i + 1$ smallest eigenvalues.
\end{lemma}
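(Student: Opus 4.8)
The plan is to derive the formula from the spectral theorem by the classical two-sided argument, bounding the min--max quantity from above by exhibiting a good subspace and from below by a dimension-count. By the spectral theorem for Hermitian matrices, write $\mat{M} = \sum_{j=1}^N \lambda_j \vec{u}_j \vec{u}_j^*$ for an orthonormal eigenbasis $\vec{u}_1, \ldots, \vec{u}_N$ of $\complex^N$ with real eigenvalues ordered as $\lambda_1 \geq \lambda_2 \geq \cdots \geq \lambda_N$, where $\lambda_j = \lambda_j(\mat{M})$. For any unit vector $\vec{v} = \sum_j c_j \vec{u}_j$ (so $\sum_j |c_j|^2 = 1$), the Rayleigh quotient satisfies $\vec{v}^* \mat{M} \vec{v} = \sum_j \lambda_j |c_j|^2$, a convex combination of the eigenvalues. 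This identity is the workhorse of both directions.

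For the upper bound, take the candidate subspace $\mathcal{S}_0 = \spn\{\vec{u}_i, \vec{u}_{i+1}, \ldots, \vec{u}_N\}$, which has dimension $N - i + 1$. Every unit vector $\vec{v} \in \mathcal{S}_0$ has $c_j = 0$ for $j < i$, so $\vec{v}^* \mat{M} \vec{v}$ is a convex combination of $\lambda_i, \ldots, \lambda_N$, each at most $\lambda_i$; hence $\vec{v}^* \mat{M} \vec{v} \le \lambda_i$, with equality at $\vec{v} = \vec{u}_i$. Since the Rayleigh quotient is continuous on the compact unit sphere of $\mathcal{S}_0$, the inner maximum over $\mathcal{S}_0$ equals exactly $\lambda_i$. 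Therefore the outer minimum over all $(N-i+1)$-dimensional subspaces is at most $\lambda_i$, and it is attained at $\mathcal{S}_0$, which is precisely the span of the $N - i + 1$ smallest eigenvectors.

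For the matching lower bound, let $\mathcal{S}$ be an arbitrary subspace with $\dim \mathcal{S} = N - i + 1$, and set $\mathcal{T} = \spn\{\vec{u}_1, \ldots, \vec{u}_i\}$, of dimension $i$. Since $\dim \mathcal{S} + \dim \mathcal{T} = N + 1 > N$, the intersection $\mathcal{S} \cap \mathcal{T}$ is nontrivial, so we may choose a unit vector $\vec{w} \in \mathcal{S} \cap \mathcal{T}$. Because $\vec{w} \in \mathcal{T}$, its Rayleigh quotient is a convex combination of $\lambda_1, \ldots, \lambda_i$, each at least $\lambda_i$, giving $\vec{w}^* \mat{M} \vec{w} \ge \lambda_i$; and because $\vec{w} \in \mathcal{S}$, the inner maximum over $\mathcal{S}$ is at least $\lambda_i$. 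As $\mathcal{S}$ was arbitrary, the outer minimum is at least $\lambda_i$. Combining the two bounds gives the claimed equality, with $\mathcal{S}_0$ an optimizer.

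The only subtle point — and the crux of the argument — is the dimension count $\dim \mathcal{S} + \dim \mathcal{T} > N$ forcing a nonzero common vector; the rest is bookkeeping with Rayleigh quotients. I would also note that the optimizing subspace need not be unique when eigenvalues are repeated, but the choice stated in the lemma (the span of the eigenvectors attached to the $N - i + 1$ smallest eigenvalues) is always valid by the upper-bound computation above.
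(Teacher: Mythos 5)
Your proof is correct: it is the standard Courant--Fischer argument (upper bound from the subspace spanned by the eigenvectors of the $N-i+1$ smallest eigenvalues, lower bound from the dimension count forcing a nontrivial intersection with $\operatorname{span}\{\vec{u}_1,\ldots,\vec{u}_i\}$). The paper does not prove this lemma at all---it simply cites it as a classical fact from a textbook---so your argument is exactly the expected textbook proof and fills in nothing controversial.
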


\begin{proof}[Proof of \cref{prop:random}]
    The proof has three steps.
    
    \textbf{\textit{Step 1: Comparison of block and accelerated \RPCholesky.}} Accelerated \RPCholesky and block \RPCholesky yield the same distribution of pivot proposals, so the proposals $\set{S}' = \{s_1', \ldots, s_b'\}$ are assumed to be identical.
    Block \RPCholesky then accepts all the proposals, leading to a residual matrix
    \begin{equation*}
        \mat{\tilde{A}}^{(1, b)} = \mat{A} - \mat{A}(:, \set{S}') \mat{A}(\set{S}', \set{S}')^{\dagger} \mat{A}(\set{S}', :)
        = \mat{A}^{1/2} (\mathbf{I} - \mat{\Pi}_{\mat{A}^{1/2}(:, \set{S}')}) \mat{A}^{1/2},
    \end{equation*}
    where
    \begin{equation*}
        \mat{\Pi}_{\mat{A}^{1/2}(:, \set{S}')}
        = \mat{A}^{1/2}(:, \set{S}') \mat{A}(\set{S}', \set{S}')^{\dagger} \mat{A}^{1/2}(\set{S}', :)
    \end{equation*}
    is the orthogonal projection onto the range of $\mat{A}^{1/2}(:, \set{S}')$.
    In contrast, accelerated \RPCholesky accepts a subset of the proposals $\set{S} \subseteq \set{S}'$, leading to a residual matrix
    \begin{equation*}
        \mat{A}^{(1, b)} 
        = \mat{A}^{1/2} (\mathbf{I} - \mat{\Pi}_{\mat{A}^{1/2}(:, \set{S})}) \mat{A}^{1/2}.
    \end{equation*}
    The range of $\mat{A}^{1/2}(:, \set{S}')$ is no smaller than the range of $\mat{A}^{1/2}(:, \set{S})$, so 
    \begin{equation}
    \label{eq:to_conjugate}
        \mathbf{I} - \mat{\Pi}_{\mat{A}^{1/2}(:, \set{S}')} 
        \preceq \mathbf{I} - \mat{\Pi}_{\mat{A}^{1/2}(:, \set{S})}.
    \end{equation}
    Conjugate both sides of \cref{eq:to_conjugate} with $\mat{A}^{1/2}$ to yield
    \begin{equation*}
        \mat{\tilde{A}}^{(1, b)} 
        = \mat{A}^{1/2} (\mathbf{I} - \mat{\Pi}_{\mat{A}^{1/2}(:, \set{S}')}) \mat{A}^{1/2} 
        \preceq \mat{A}^{1/2} (\mathbf{I} - \mat{\Pi}_{\mat{A}^{1/2}(:, \set{S})}) \mat{A}^{1/2}
        = \mat{A}^{(1, b)}.
    \end{equation*}
    Taking expectations, it follows that $\mathbb{E}[\mat{\tilde{A}}^{(1, b)}] \preceq \mathbb{E}[\mat{A}^{(1, b)}]$
    and, by Weyl's inequality,
    \begin{equation*}
        \lambda_i(\mathbb{E}[\mat{\tilde{A}}^{(1, b)}]) \leq \lambda_i(\mathbb{E}[\mat{A}^{(1, b)}]), \quad \text{for each } i=1, \ldots, N.
    \end{equation*}
    This completes the comparison of block and accelerated \RPCholesky.

    \textbf{\emph{Step 2: Bounding $\vec{v}^* \mathbb{E}[\mat{A}^{(1, b)}] \vec{v}$.}}
    Now it suffices to analyze the accelerated \RPCholesky error.
    This proof will do so by (a) deriving an upper bound for the quadratic form $\vec{v}^* \mathbb{E}[\mat{A}^{(1, b)}] \vec{v}$, (b) constructing a $(N - i + 1)$-dimensional subspace where the Rayleigh quotients are no larger than $\lambda_i(\mat{\Phi}_b(\expect[\mat{A}]))$, and (c) invoking the min--max principle.
    
    The residual for accelerated \RPCholesky after the first proposal has expectation
    \begin{equation*}
        \mathbb{E}[\mat{A}^{(1, 1)}]
        = \mathbb{E}[\mathbb{E}[\mat{A}^{(1, 1)}|\mat{A}]]
        = \mathbb{E}[\mat{A} - \tfrac{1}{\tr(\mat{A})} \mat{A}^2]
        = \mathbb{E}[\mat{\Phi}_1(\mat{A})].
    \end{equation*}
    Since $\mat{\Phi}_1$ is concave over psd matrices (\cref{lem:matrix}c), Jensen's inequality implies
    \begin{equation*}
    \label{eq:conjugate_me}
        \mathbb{E}[\mat{A}^{(1, 1)}]
        = \mathbb{E}[\mat{\Phi}_1(\mat{A})]
        \preceq \mat{\Phi}_1(\mathbb{E}[\mat{A}]).
    \end{equation*}
    Next fix a unit-length vector $\vec{v} \in \mathbb{C}^N$ and conjugate both sides of this display with $\vec{v}$:
    \begin{equation*}
        \mathbb{E}[\vec{v}^* \mat{A}^{(1, 1)} \vec{v}]
        \leq \vec{v}^* \mat{\Phi}_1(\mathbb{E}[\mat{A}]) \vec{v}.
    \end{equation*}
    By boundedness of $\phi_{\mathbb{E}[\tr(\mat{A})]}$ and 
    monotonicity of $\phi_{\mathbb{E}[\tr(\mat{A})]}^{(b-1)}$ (\cref{lem:properties}a--b), it follows
    \begin{equation}
    \label{eq:exploit_me}
        \phi^{(b-1)}_{\mathbb{E}[\tr(\mat{A})]}(\mathbb{E}[\vec{v}^* \mat{A}^{(1, 1)} \vec{v}])
        \leq \phi^{(b-1)}_{\mathbb{E}[\tr(\mat{A})]}(\vec{v}^* \mat{\Phi}_1(\mathbb{E}[\mat{A}]) \vec{v}).
    \end{equation}
    
    The residual matrix after the $(\ell + 1)$st proposal has conditional expectation
    \begin{equation*}
    \label{eq:same_calc}
        \mathbb{E}[\mat{A}^{(1, \ell+1)}| \mat{A}^{(1, \ell)}, \mat{A}]
        = \mat{A}^{(1, \ell)} - \tfrac{1}{\tr(\mat{A})} (\mat{A}^{(1, \ell)})^2.
    \end{equation*}
    Conjugate both sides of this display with $\vec{v}$ to obtain:
    \begin{equation*}
    \begin{aligned}
        \mathbb{E}[\vec{v}^* \mat{A}^{(1, \ell+1)} \vec{v} | \mat{A}^{(1, \ell)}, \mat{A}]
        &= \vec{v}^* \mat{A}^{(1, \ell)} \vec{v} - \tfrac{1}{\tr(\mat{A})} \vec{v}^* (\mat{A}^{(1, \ell)})^2 \vec{v} \\
        &\leq \vec{v}^* \mat{A}^{(1, \ell)} \vec{v} - \tfrac{1}{\tr(\mat{A})} (\vec{v}^* \mat{A}^{(1, \ell)} \vec{v})^2
        = \phi_{\tr(\mat{A})}(\vec{v}^* \mat{A}^{(1, \ell)} \vec{v}).
    \end{aligned}
    \end{equation*}
    The inequality follows because $\vec{v} \vec{v}^*$ is a contraction.
    Take expectations and apply Jensen's inequality to the concave function $x \mapsto \phi_{\tr(\mat{A})}(x)$ (\cref{lem:properties}d):
    \begin{equation*}
        \mathbb{E}[\vec{v}^* \mat{A}^{(1, \ell+1)} \vec{v} | \mat{A}]
        \leq \mathbb{E}[\phi_{\tr(\mat{A})}(\vec{v}^* \mat{A}^{(1, \ell)} \vec{v}) | \mat{A}]
        \leq \phi_{\tr(\mat{A})}(\mathbb{E}[\vec{v}^* \mat{A}^{(1, \ell)} \vec{v} | \mat{A}]).
    \end{equation*}
    Use the boundedness of $\phi_{\tr(\mat{A})}$ and monotonicity of $\phi_{\tr(\mat{A})}^{(b-\ell-1)}$
    (\cref{lem:properties}a--b) to show
    \begin{equation*}
        \phi^{(b-\ell-1)}_{\tr(\mat{A})}(\mathbb{E}[\vec{v}^* \mat{A}^{(1, \ell+1)} \vec{v} | \mat{A}])
        \leq \phi^{(b-\ell)}_{\tr(\mat{A})}(\mathbb{E}[\vec{v}^* \mat{A}^{(1, \ell)} \vec{v} | \mat{A}]).
    \end{equation*}
    Chaining the above inequalities for $\ell = 1, 2, \ldots, b - 1$ yields
    \begin{equation*}
        \mathbb{E}[\vec{v}^* \mat{A}^{(1, b)} \vec{v} | \mat{A}]
        \leq \phi^{(b-1)}_{\tr(\mat{A})}(\mathbb{E}[\vec{v}^* \mat{A}^{(1, 1)} \vec{v} | \mat{A}]).
    \end{equation*}
    Taking expectations yields:
    \begin{equation}
    \label{eq:use_me}
    \begin{aligned}
        \vec{v}^* \mathbb{E}[\mat{A}^{(1, b)}] \vec{v}
        &\leq \mathbb{E}[\phi^{(b-1)}_{\tr(\mat{A})}(\mathbb{E}[\vec{v}^* \mat{A}^{(1, 1)} \vec{v} | \mat{A}])] \\
        &\leq \phi^{(b-1)}_{\mathbb{E}[\tr(\mat{A})]}(\mathbb{E}[\vec{v}^* \mat{A}^{(1, 1)} \vec{v}]) \leq \phi^{(b-1)}_{\mathbb{E}[\tr(\mat{A})]}(\vec{v}^* \mat{\Phi}_1(\mathbb{E}[\mat{A}]) \vec{v}).
    \end{aligned}
    \end{equation}
    The second inequality is Jensen's inequality applied to the jointly concave function $(x, \alpha) \mapsto \phi^{(b-1)}_\alpha(x)$ (\cref{lem:properties}d), and the third inequality is \cref{eq:exploit_me}.
    This establishes a convenient upper bound for the quadratic form $\vec{v}^* \mathbb{E}[\mat{A}^{(1, b)}] \vec{v}$.

    \textbf{\emph{Step 3: Apply the min--max principle.}}
    Let $\vec{v}_1, \ldots, \vec{v}_N$ be eigenvectors of $\mat{\Phi}_1(\expect[\mat{A}])$, associated with its decreasingly ordered eigenvalues.
    For any unit-length vector $\vec{v} \in \operatorname{span}\{\vec{v}_i, \ldots, \vec{v}_N\}$, the min--max variational principle implies
    \begin{equation*}
        \vec{v}^* \mat{\Phi}_1(\mathbb{E}[\mat{A}]) \vec{v} 
        \leq \lambda_i(\mat{\Phi}_1(\mathbb{E}[\mat{A}])).
    \end{equation*}   
    By \cref{eq:use_me}, boundedness of $\phi_{\mathbb{E}[\tr(\mat{A})]}$, and 
    monotonicity of $\phi_{\mathbb{E}[\tr(\mat{A})]}^{(b-1)}$ (\cref{lem:properties}a--b), it follows
    \begin{equation*}
        \vec{v}^* \mathbb{E}[\mat{A}^{(1, b)}] \vec{v}
        \leq \phi^{(b-1)}_{\mathbb{E}[\tr(\mat{A})]}(\vec{v}^* \mat{\Phi}_1(\mathbb{E}[\mat{A}]) \vec{v})
        \leq \phi^{(b-1)}_{\mathbb{E}[\tr(\mat{A})]}(\lambda_i(\mat{\Phi}_1(\mathbb{E}[\mat{A}]))).
    \end{equation*}
    By boundedness of $\phi_{\mathbb{E}[\tr(\mat{A})]}$ and the 
    monotonicity of $\phi_{\mathbb{E}[\tr(\mat{A})]}^{(b-1)}$ (\cref{lem:properties}a--b),
    the matrices $\phi^{(b-1)}_{\expect[\tr(\mat{A})]}(\mat{\Phi}_1(\expect[\mat{A}]))$ and $\mat{\Phi}_1(\expect[\mat{A}])$ have the same eigenvectors with the same ordering of the eigenvalues.
    Hence,
    \begin{equation*}
        \phi^{(b-1)}_{\mathbb{E}[\tr(\mat{A})]}(\lambda_i(\mat{\Phi}_1(\mathbb{E}[\mat{A}]))) 
        = \lambda_i( \phi^{(b-1)}_{\mathbb{E}[\tr(\mat{A})]}(\mat{\Phi}_1(\mathbb{E}[\mat{A}])))
        = \lambda_i(\mat{\Phi}_b(\mathbb{E}[\mat{A}])).
    \end{equation*}
    Combining the two previous displays yields
    \begin{equation*}
        \vec{v}^* \expect[\mat{A}^{(1, b)}] \vec{v} \leq \lambda_i \bigl( \mat{\Phi}_b(\mathbb{E}[\mat{A}]) \bigr)
        \quad \text{for any unit-length } \vec{v} \in \operatorname{span}\{\vec{v}_i, \ldots, \vec{v}_N\}.
    \end{equation*}
    It follows from the min-max variational principle that $\lambda_i(\expect[\mat{A}^{(1, b)}]) \leq \lambda_i(\mat{\Phi}_b(\expect[\mat{A}]))$, completing the proof.
\end{proof}

The central result of this section is proved by iteratively applying \cref{prop:random}.

\begin{proof}[Proof of \Cref{prop:accelerated}]
    The $(s+1)$st round of accelerated \RPCholesky or block \RPCholesky leads to the same residual matrix as a single round of accelerated \RPCholesky or block \RPCholesky applied to the psd matrix $\mat{A}^{(s)}$.
    Thus, \Cref{prop:random} shows that
    \begin{equation}
    \label{eq:eigenvalue_condition}
        \lambda_i(\expect[\mat{A}^{(s+1)}]) \leq \lambda_i(\mat{\Phi}_b(\expect[\mat{A}^{(s)}]))
        \quad \text{for each } i = 1, \ldots, N.
    \end{equation}
    Next take eigendecompositions
    \begin{equation*}
        \expect[\mat{A}^{(s+1)}] 
        = \mat{Q}_1 \mat{\Lambda}_1 \mat{Q}_1^*
        \quad \text{and} \quad
        \mat{\Phi}_b(\expect[\mat{A}^{(s)}])
        = \mat{Q}_2 \mat{\Lambda}_2 \mat{Q}_2^*,
    \end{equation*}
    where the eigenvalues are listed in weakly decreasing order.
    \Cref{eq:eigenvalue_condition} implies $\mat{\Lambda}_1 \preceq \mat{\Lambda}_2$, so the monotonicity of $\mat{\Phi}_b^{(t-s-1)}$ (\cref{lem:matrix}a) further implies
    \begin{equation}
    \label{eq:psd_ordering}
        \mat{\Phi}^{(t-s-1)}_b(\mat{\Lambda}_1)
        \preceq \mat{\Phi}^{(t-s-1)}_b(\mat{\Lambda}_2).
    \end{equation}
    The function $\mat{\Phi}_b$ is covariant under unitary basis transformations:
    \begin{equation*}
        \mat{\Phi}_b(\mat{Q} \mat{\Lambda} \mat{Q}^*)
        = \mat{Q} \mat{\Phi}_b(\mat{\Lambda}) \mat{Q}^*
        \quad \text{for unitary } \mat{Q} \in \mathbb{C}^{N \times N}.
    \end{equation*}
    Therefore, the eigenvalue ordering \cref{eq:psd_ordering} implies
    \begin{equation*}
    \begin{aligned}
        \mat{Q}_1^* \mat{\Phi}^{(t-s-1)}_b(\expect[\mat{A}^{(s+1)}]) \mat{Q}_1
        &= \mat{\Phi}^{(t-s-1)}_i(\mat{\Lambda}_1) \\
        &\preceq
        \mat{\Phi}^{(t-s-1)}_i(\mat{\Lambda}_2) = \mat{Q}_2^* \mat{\Phi}^{(t-s)}_b(\expect[\mat{A}^{(s)}]) \mat{Q}_2.
    \end{aligned}
    \end{equation*}
    Since the eigenvalues are invariant under unitary basis transformations, Weyl's inequality implies
    \begin{equation}
    \label{eq:iteration}
    \begin{aligned}
        \lambda_i(\mat{\Phi}^{(t-s-1)}_b(\expect[\mat{A}^{(s+1)}]))
        &=
        \lambda_i(\mat{Q}_1^* \mat{\Phi}^{(t-s-1)}_b(\expect[\mat{A}^{(s+1)}]) \mat{Q}_1) \\
        &\leq \lambda_i(\mat{Q}_2^* \mat{\Phi}^{(t-s)}_i(\expect[\mat{A}^{(s)}]) \mat{Q}_2) = \lambda_i(\mat{\Phi}^{(t-s)}_b(\expect[\mat{A}^{(s)}])).
    \end{aligned}
    \end{equation}
    Apply \cref{eq:iteration} for $s = 0, 1, \ldots, t-1$ to complete the proof.
\end{proof}

\subsection{Part III: Permutation averaging lemma}
\label{sec:permutation}

The permutation averaging lemma has the following short and simple proof.

\begin{proof}[Proof of \cref{lem:permutation}]
    Since the trace is invariant under permutations
    and the function $\mat{\Phi}_b$ is covariant under permutations,
    \begin{equation*}
        \tr(\mat{\Phi}_b^{(t)}(\mat{\Lambda}))
        = \tr(\mat{P}\mat{\Phi}_b^{(t)}(\mat{\Lambda})\mat{P}^*)
        = \tr(\mat{\Phi}_b^{(t)}(\mat{P} \mat{\Lambda} \mat{P}^*)).
    \end{equation*}
    Now apply Jensen's inequality to the concave function $\mat{\Phi}_b^{(t)}$ (\cref{lem:matrix}b):
    \begin{equation*}
        \expect[\mat{\Phi}_b^{(t)}(\mat{P} \mat{\Lambda} \mat{P}^*)]
        \preceq \mat{\Phi}_b^{(t)}(\expect[\mat{P} \mat{\Lambda} \mat{P}^*]).
    \end{equation*}
    Since the trace respects the psd ordering, it follows
    \begin{equation*}
        \tr(\mat{\Phi}_b^{(t)}(\mat{\Lambda}))
        = \tr(\expect[\mat{\Phi}_b^{(t)}(\mat{P} \mat{\Lambda} \mat{P}^*)])
        \leq \tr(\mat{\Phi}_b^{(t)}(\expect[\mat{P} \mat{\Lambda} \mat{P}^*])).
    \end{equation*}
    This completes the proof.
\end{proof}

\subsection{Part IV: Main error bound} \label{sec:dynamical}

At this point, the main error bound for accelerated \RPCholesky is nearly in hand.
The last step is an analysis argument that explicitly bounds the error for a worst-case instance of accelerated \RPCholesky.

\begin{proof}[Proof of \cref{thm:main_bound}]
    Take an eigendecomposition $\mat{A} = \mat{Q} \mat{\Lambda} \mat{Q}^*$. By \cref{prop:accelerated},
    \begin{equation*}
        \mathbb{E}[\tr(\mat{A}^{(t)})] \leq \tr(\mat{\Phi}_b^{(t)}(\mat{A}))
        = \tr(\mat{\Phi}_b^{(t)}(\mat{Q} \mat{\Lambda} \mat{Q}^*)).
    \end{equation*}
    Since the trace is invariant under unitary basis transformations
    and the function $\mat{\Phi}_b$ is covariant under unitary basis transformations,
    \begin{equation*}
        \tr(\mat{\Phi}_b^{(t)}(\mat{Q} \mat{\Lambda} \mat{Q}^*))
        = \tr(\mat{Q} \mat{\Phi}_b^{(t)}( \mat{\Lambda} )\mat{Q}^*)
        = \tr(\mat{\Phi}_b^{(t)}( \mat{\Lambda} )).
    \end{equation*}
    Therefore, the upper bound only depends on the eigenvalue matrix $\mat{\Lambda} \in \mathbb{R}^{N \times N}$.

    Next, take a random permutation $\mat{P}$ that permutes the first $r$ coordinates uniformly at random and permutes the trailing $N - r$ coordinates uniformly at random.
    By \cref{lem:permutation},
    \begin{equation*}
        \tr(\mat{\Phi}_b^{(t)}(\mat{\Lambda}))
        \leq \tr(\mat{\Phi}_b^{(t)}(\mat{\hat{\Lambda}})),
        \quad \text{where } \mat{\hat{\Lambda}} = \expect[\mat{P} \mat{\Lambda} \mat{P}^*].
    \end{equation*}
    The matrix $\mat{\hat{\Lambda}}$ has the explicit form
    \begin{equation*}
        \mat{\hat{\Lambda}}
        = \operatorname{diag}\Bigl\{\underbrace{\frac{\alpha}{r}, \ldots, \frac{\alpha}{r}}_{r \text{ times}},
        \underbrace{\frac{\beta}{N - r}, \ldots, \frac{\beta}{N - r}}_{N - r \text{ times}},\Bigr\},
        \quad \text{where } \begin{cases}
            \alpha = \sum_{i = 1}^r \lambda_i(\mat{A}), \\
            \beta = \sum_{i = r + 1}^N \lambda_i(\mat{A}).
        \end{cases}
    \end{equation*}
    It is a two-cluster matrix that leads to the worst possible value of $\tr(\mat{\Phi}_b^{(t)}(\mat{\Lambda}))$

    \newcommand{\Lambdahat}{\smash{\mat{\hat{\Lambda}}}}
    For the next step, argue by induction that
    \begin{equation}
    \label{eq:comparison}
        \mat{\Phi}_b^{(t)}(\Lambdahat)
        \preceq \Lambdahat^{(t)},
        \quad \text{where } \Lambdahat^{(t)} = \operatorname{diag}\Bigl\{\underbrace{\frac{\alpha^{(t)}}{r}, \ldots, \frac{\alpha^{(t)}}{r}}_{r \text{ times}},
        \underbrace{\frac{\beta}{N - r}, \ldots, \frac{\beta}{N - r}}_{N - r \text{ times}},\Bigr\},
    \end{equation}
    and the entries $\alpha^{(t)}$ are given by the recursion
    \begin{equation}
    \label{eq:discrete_time}
        \alpha^{(t+1)} = \alpha^{(t)} - \frac{b (\alpha^{(t)})^2}{(b + r) \alpha^{(t)} + r \beta}, \qquad \alpha^{(0)} = \alpha.
    \end{equation}
    The comparison \cref{eq:comparison} is valid with equality at time $t = 0$.
    Assume the comparison is valid at a time $t \geq 0$.
    Using the monotonicity of $\mat{\Phi}_b$ (\cref{lem:matrix}a) it follows that
    \begin{equation*}
        \mat{\Phi}_b^{(t+1)}(\Lambdahat)
        \preceq \mat{\Phi}_b(\Lambdahat^{(t)})
        = \phi_{\alpha^{(t)} + \beta}^{(b)}(\Lambdahat^{(t)}).
    \end{equation*}
    By \cref{lem:properties}e, the trailing $N - r$ diagonal entries of $\phi_{\alpha^{(t)} + \beta}^{(b)}(\Lambdahat^{(t)})$ are bounded by $\frac{\beta}{N-r}$ and the leading $r$ diagonal entries are bounded by
    \begin{equation*}
         \frac{1}{\frac{b}{\alpha^{(t)} + \beta} + \frac{r}{\alpha^{(t)}}}
         = \frac{1}{r} \Bigl[ \alpha^{(t)} - \frac{b (\alpha^{(t)})^2}{(b + r) \alpha^{(t)} + r \beta} \Bigr].
    \end{equation*}
    Therefore, \cref{eq:comparison} holds for all $t\ge 0$ by induction.

    At each instant $t = 0, 1, \ldots$, the discrete-time dynamical system $\alpha^{(t)}$ \cref{eq:discrete_time} is bounded from above by a continuous-time dynamical system $\alpha(t)$ that satisfies
    \begin{equation}
    \label{eq:continuous_time}
        \dot{\alpha}(t) = - \frac{b (\alpha(t))^2}{(b + r) \alpha(t) + r \beta}, \qquad \alpha^{(0)} = \alpha.
    \end{equation}
    The comparison holds because $x \mapsto b x^2 / ((b + r) x + r \beta)$ is a non-negative, non-decreasing function.

    Last, assume that $\alpha \leq \varepsilon \beta$.
    (Otherwise, the theorem holds trivially at all times $t \geq 0$.)
    By separation of variables, the continuous-time dynamical system \cref{eq:continuous_time} satisfies $\alpha(t) \leq \varepsilon \beta$ as soon as
    \begin{equation*}
        t = \Bigl[ -\frac{b+r}{b} \log(\alpha(t)) + \frac{r \beta}{b} \alpha(t)^{-1} \Bigr]^{\alpha(t) = \varepsilon \beta}_{\alpha(0) = \alpha}
        = \frac{b+r}{b} \log\Bigl(\frac{1}{\varepsilon \eta}\Bigr)
        + \frac{r}{b \varepsilon} - \frac{r}{b} \eta.
    \end{equation*}
    This establishes a slightly stronger version of the theorem, completing the proof.
\end{proof}

\section{Extension: Accelerated randomly pivoted \QR} \label{sec:qr}

The pivoted partial Cholesky decomposition of a psd matrix $\mat{A}$ is closely related to the pivoted partial \QR decomposition of a general, possibly rectangular, matrix $\mat{B} \in \complex^{M\times N}$.
This section reviews the connection betweeen algorithms and introduces a new \emph{accelerated randomly pivoted \QR algorithm} for general low-rank matrix approximation.

A low-rank approximation for a general matrix can be computed by column-pivoted \QR as follows:
Begin by initializing the approximation $\Bhat^{(0)} \coloneqq \mat{0}$ and the residual $\mat{B}^{(0)} \coloneqq \mat{B}$.
For each $i = 0,\ldots,k-1$, perform the following steps:
\begin{enumerate}
    \item \textbf{Select a pivot column.} Choose $s_{i+1} \in \{1,\ldots,N\}$.
    \item \textbf{Update.} Evaluate the column $\mat{B}^{(i)}(:, s_{i+1})$ indexed by the pivot index $s_{i+1}$.
    Update the approximation and the residual:
    \begin{align*}
        \Bhat^{(i+1)} \coloneqq \Bhat^{(i)} + \frac{\mat{B}^{(i)}(:, s_{i+1})\mat{B}^{(i)}(:, s_{i+1})^*\mat{B}^{(i)}}{\norm{\mat{B}^{(i)}(:, s_{i+1})}^2}, \\
        \mat{B}^{(i+1)} \coloneqq \mat{B}^{(i)} - \frac{\mat{B}^{(i)}(:, s_{i+1})\mat{B}^{(i)}(:, s_{i+1})^*\mat{B}^{(i)}}{\norm{\mat{B}^{(i)}(:, s_{i+1})}^2}.
    \end{align*}
\end{enumerate}
Each update has the effect of orthogonalizing the residual $\mat{B}^{(i)}$ against the column indexed by the selected pivot and adjusting the approximation $\Bhat^{(i)}$ accordingly.
This version of column-pivoted \QR has issues with numerically stability and is provided for conceptual understanding only.
Numerically stable implementation of \QR decomposition methods is discussed in numerical linear algebra textbooks, e.g., \cite[ch.~5]{GV13}.

Pivoted \QR and Cholesky approximations are closely related (see, e.g., \cite{Hig90a}):

\begin{proposition}[\QR and Cholesky] \label{prop:qr_cholesky}
    Suppose that column-pivoted \QR is executed on a matrix $\mat{B}$ with pivots $s_1,\ldots,s_k$ producing an approximation $\Bhat\approx \mat{B}$.
    Similarly, suppose that partial Cholesky is performed on the \emph{Gram matrix} $\mat{A} \coloneqq \mat{B}^*\mat{B}$ with the same pivots $s_1,\ldots,s_k$, producing an approximation $\Ahat \approx \mat{A}$.
    Then $\Bhat^*\Bhat = \Ahat$.
\end{proposition}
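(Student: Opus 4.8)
The plan is to prove, by induction on the step index $i$, that the \QR iterates and the Cholesky iterates satisfy two matching invariants: the \emph{Gram identity} $\mat{A}^{(i)} = (\mat{B}^{(i)})^* \mat{B}^{(i)}$ and the \emph{orthogonality relation} $(\Bhat^{(i)})^* \mat{B}^{(i)} = \mat{0}$. Both are immediate at $i = 0$, since $\mat{A}^{(0)} = \mat{A} = \mat{B}^* \mat{B}$ and $\Bhat^{(0)} = \mat{0}$. It is also convenient to record the (trivially inductive) identity $\Bhat^{(i)} + \mat{B}^{(i)} = \mat{B}$, which follows because the correction added to $\Bhat^{(i)}$ is exactly the quantity subtracted from $\mat{B}^{(i)}$.

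For the inductive step, write the \QR update compactly as $\mat{B}^{(i+1)} = \mat{P}_{i+1} \mat{B}^{(i)}$ and $\Bhat^{(i+1)} = \Bhat^{(i)} + (\Id - \mat{P}_{i+1}) \mat{B}^{(i)}$, where $\mat{P}_{i+1} = \Id - \mat{B}^{(i)}(:,s_{i+1}) \mat{B}^{(i)}(:,s_{i+1})^* / \norm{\mat{B}^{(i)}(:,s_{i+1})}^2$ is the orthogonal projector onto the orthogonal complement of the selected residual column. (If that column vanishes, both algorithms leave their iterates unchanged, and there is nothing to check.) Under the Gram identity at step $i$, the scalar $\mat{A}^{(i)}(s_{i+1}, s_{i+1})$ equals $\norm{\mat{B}^{(i)}(:,s_{i+1})}^2$, the column $\mat{A}^{(i)}(:, s_{i+1})$ equals $(\mat{B}^{(i)})^* \mat{B}^{(i)}(:, s_{i+1})$, and by Hermitian symmetry $\mat{A}^{(i)}(s_{i+1},:)$ equals $\mat{B}^{(i)}(:,s_{i+1})^* \mat{B}^{(i)}$. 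Substituting these into the Cholesky update and comparing with $(\mat{B}^{(i+1)})^* \mat{B}^{(i+1)} = (\mat{B}^{(i)})^* \mat{P}_{i+1}^2 \mat{B}^{(i)} = (\mat{B}^{(i)})^* \mat{P}_{i+1} \mat{B}^{(i)}$ (using that $\mat{P}_{i+1}$ is a Hermitian idempotent) shows the two sides agree term for term, which is the Gram identity at step $i+1$. For the orthogonality relation, expand $(\Bhat^{(i+1)})^* \mat{B}^{(i+1)} = (\Bhat^{(i)})^* \mat{P}_{i+1} \mat{B}^{(i)} + (\mat{B}^{(i)})^* (\Id - \mat{P}_{i+1}) \mat{P}_{i+1} \mat{B}^{(i)}$; the second summand vanishes since $(\Id - \mat{P}_{i+1})\mat{P}_{i+1} = \mat{0}$, and the first vanishes since $\mat{P}_{i+1} \mat{B}^{(i)}$ is $\mat{B}^{(i)}$ minus a multiple of the column $\mat{B}^{(i)}(:,s_{i+1})$, both of which are annihilated on the left by $(\Bhat^{(i)})^*$ thanks to the inductive hypothesis $(\Bhat^{(i)})^* \mat{B}^{(i)} = \mat{0}$.

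With both invariants established at $i = k$, the conclusion is a Pythagorean splitting. Using $\mat{B} = \Bhat^{(k)} + \mat{B}^{(k)}$ and the orthogonality relation $(\Bhat^{(k)})^* \mat{B}^{(k)} = \mat{0}$, the cross terms drop out of $\mat{B}^* \mat{B}$, leaving $\mat{A} = (\Bhat^{(k)})^* \Bhat^{(k)} + (\mat{B}^{(k)})^* \mat{B}^{(k)}$; invoking the Gram identity $(\mat{B}^{(k)})^* \mat{B}^{(k)} = \mat{A}^{(k)}$ together with $\mat{A}^{(k)} = \mat{A} - \Ahat$ gives $\mat{A} = \Bhat^* \Bhat + \mat{A} - \Ahat$, hence $\Bhat^* \Bhat = \Ahat$. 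An equivalent route bypasses the induction: one checks directly from the update rules that $\Bhat = \mat{\Pi}_{\mat{B}(:,\set{S})} \mat{B}$ for $\set{S} = \{s_1, \ldots, s_k\}$ and that partial Cholesky produces the Nyström residual, so $\Ahat = \mat{A}(:,\set{S}) \mat{A}(\set{S},\set{S})^\dagger \mat{A}(\set{S},:)$; then $\Bhat^* \Bhat = \mat{B}^* \mat{\Pi}_{\mat{B}(:,\set{S})} \mat{B}$ combined with $\mat{\Pi}_{\mat{B}(:,\set{S})} = \mat{B}(:,\set{S}) \mat{A}(\set{S},\set{S})^\dagger \mat{B}(:,\set{S})^*$ yields the identity at once.

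The only point that demands care is the degenerate case: when a selected pivot column already lies in the span of the previously selected columns, the corresponding residual column is zero, the \QR update is formally $0/0$, and $\mat{A}^{(i)}(s_{i+1},s_{i+1}) = 0$. The correct convention is that such a step is a no-op in both algorithms, under which both invariants are trivially preserved; this also explains the need for the pseudoinverse in the Nyström form of the Cholesky residual. Apart from this bookkeeping, every step of the argument is a one-line manipulation that uses only that $\mat{P}_{i+1}$ is an orthogonal projector.
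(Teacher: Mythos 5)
Your proof is correct. Note that the paper does not supply its own proof of this proposition --- it simply cites Higham's work on the \QR/Cholesky correspondence --- so there is no in-paper argument to compare against. Your induction on the two invariants, the Gram identity $\mat{A}^{(i)} = (\mat{B}^{(i)})^*\mat{B}^{(i)}$ and the orthogonality $(\Bhat^{(i)})^*\mat{B}^{(i)} = \mat{0}$, is the standard argument; the inductive steps are carried out correctly (the key facts being that $\mat{P}_{i+1}$ is a Hermitian idempotent and that the Gram identity converts the residual column norm and inner products into the corresponding entries of $\mat{A}^{(i)}$), the Pythagorean conclusion is sound, and you appropriately flag the degenerate zero-pivot case as a no-op in both algorithms.
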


This result establishes the connection between the low-rank approximation of a psd matrix based on the pivoted Cholesky decomposition and the low-rank approximation of a general, rectangular matrix by the column-pivoted \QR decomposition.
In particular, any pivot selection strategy for Cholesky decomposition (i.e., random pivoting) has an analog for \QR decomposition and visa versa.

\begin{algorithm}[t]
  \caption{Randomly pivoted \QR}
  \label{alg:rpqr}
  \begin{algorithmic}
    \Require Matrix $\mat{B} \in \mathbb{C}^{M\times N}$; approximation rank $k$
    \Ensure Matrices $\mat{Q} \in \complex^{M\times k},\mat{F} \in \mathbb{C}^{N\times k}$ defining $\Bhat = \mat{Q}\mat{F}^*$; pivot set $\set{S}$
    \State Initialize $\mat{Q} \gets \mat{0}_{M\times k}$, $\mat{F} \leftarrow \mat{0}_{N\times k}$, $\set{S} \leftarrow \emptyset$, and $\vec{d} \leftarrow \Call{SquaredColumnNorms}{\mat{B}}$ 
    \For{$i = 1$ to $k$}
    \State Sample pivot $s \sim \vec{d} / \sum_j d_j$
    \State Induct new pivot $\set{S} \gets \set{S} \cup \{s\}$
    \State $\vec{g} \leftarrow \mat{B}(:,s) - \mat{Q}(:,1:1-i) \mat{F}(s,1:i-1)^*$
    \State $\mat{Q}(:,i) \leftarrow \vec{g} / \norm{\vec{g}}$
    \State $\mat{F}(:,i) \gets \mat{B}^*\mat{Q}(:,i)$
    \State $\vec{d} \leftarrow \vec{d} - |\mat{F}(:,i)|^2$
    \EndFor
  \end{algorithmic}
\end{algorithm}

\begin{algorithm}[t]
  \caption{Accelerated randomly pivoted \QR}
  \label{alg:accelerated_rpqr}
  \begin{algorithmic}
    \Require Matrix $\mat{A} \in \mathbb{C}^{M\times N}$; block size $b$; number of rounds $t$
    \Ensure Matrices $\mat{Q} \in \complex^{M\times k},\mat{F} \in \mathbb{C}^{N\times k}$ defining $\Bhat = \mat{Q}\mat{F}^*$; pivot set $\set{S}$
    \State Initialize $\mat{Q} \gets \mat{0}_{M\times 0}$, $\mat{F} \leftarrow \mat{0}_{N\times 0}$, $\set{S} \leftarrow \emptyset$, and $\vec{u} \leftarrow \Call{SquaredColumnNorms}{\mat{B}}$
    \For{$i = 0$ to $t - 1$}
        \State \underline{\textit{Step 1: Propose a block of pivots}} \vspace{0.5em}
        \State Sample $s_{ib+1}', \ldots, s_{(i+1)b}' \stackrel{\rm iid}{\sim} \vec{u}$ and set $\set{S}_i' \gets \{s_{ib+1}', \ldots, s_{(i+1)b}'\}$
        \State 
        \State \underline{\textit{Step 2: Rejection sampling}} \vspace{0.5em}
        \State $\mat{C} \gets \mat{B}(:, \set{S}_i') - \mat{Q}\mat{F}(\set{S}_i',:)^*$ 
        \State $\set{S}_i\gets \Call{\RejectChol}{\set{S}_i',\mat{C}^*\mat{C}}$ \Comment{Discard second output}
        \State $\set{S} \gets \set{S} \cup \set{S}_i$ \Comment{Update pivots} 
        \State 
        \State \underline{\textit{Step 3: Update low-rank approximation and proposal distribution}} \vspace{0.5em}
        \State $\mat{Q}_\perp \gets \mat{B}(:, \set{S}_i) - \mat{Q}\mat{F}(\set{S}_i,:)^*$
        \State $\mat{Q}_\perp \gets \mat{Q}_\perp - \mat{Q} (\mat{Q}^*\mat{Q}_\perp)$ \Comment{Extra step of Gram--Schmidt, for stability}
        \State $\mat{Q}_\perp \gets \Call{Orth}{\mat{Q}_\perp}$ \Comment{Orthonormalize columns}
        \State $\mat{Q}\gets \onebytwo{\mat{Q}}{\mat{Q}_\perp}$
        \State $\mat{G} \gets \mat{B}^* \mat{Q}_\perp$
        \State $\mat{F} \gets \onebytwo{\mat{F}}{\mat{G}}$
        \State $\vec{u} \gets \vec{u} - \Call{SquaredRowNorms}{\mat{G}}$ \Comment{Update diagonal}
        \State $\vec{u} \gets \max \{\vec{u},\vec{0}\}$ \Comment{Helpful in floating point arithmetic}
    \EndFor
  \end{algorithmic}
\end{algorithm}

The analog of \RPCholesky is the randomly pivoted \QR algorithm, which was introduced by Deshpande, Vempala, Rademacher, and Wang \cite{DRVW06,DV06} under the name ``adaptive sampling''.
See \cref{alg:rpqr} for randomly pivoted \QR pseudocode.
As with \RPCholesky, randomly pivoted \QR can be extended using block and accelerated variants; see \cite{CETW23} for block randomly pivoted \QR pseudocode and see \cref{alg:accelerated_rpqr} for accelerated randomly pivoted \QR pseudocode.
Note that \cref{alg:accelerated_rpqr} uses a block Gram--Schmidt procedure to perform orthogonalization, which employs two orthogonalization steps for improved stability \cite{GLRE05}.
A different, even more numerically stable approach uses Householder reflectors instead \cite[Ch.~5]{GV13}.

Due to the connection between \QR and Cholesky (\cref{prop:qr_cholesky}), theoretical results for block and accelerated \RPCholesky immediately extend to their \QR analogs:

\begin{corollary}[Sufficient iterations for accelerated and block randomly pivoted \QR] \label{cor:qr}
    Consider a rectangular matrix $\mat{B} \in \mathbb{C}^{M \times N}$, and let $\mat{B}^{(t)}$ denote the random residual after applying $t$ rounds of accelerated randomly pivoted \QR or block randomly pivoted \QR with a block size $b \geq 1$.
    Then,
    \begin{equation*}
        \mathbb{E}\lVert \mat{B}^{(t)} \rVert_{\rm F}^2 \leq (1 + \varepsilon) \cdot \sum\nolimits_{i = r + 1}^N \sigma_i(\mat{B})^2
    \end{equation*}
    as soon as
    \begin{equation*}
        tb \geq \frac{r}{\varepsilon} + (r + b) \log\Bigl(\frac{1}{\epsilon \eta}\Bigr),
        \quad \text{where } \eta = \frac{\sum_{i = r + 1}^N \sigma_i(\mat{B})^2}{\sum_{i = 1}^r \sigma_i(\mat{B})^2}.
    \end{equation*}
\end{corollary}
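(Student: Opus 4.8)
The plan is to reduce \cref{cor:qr} to \cref{thm:main_bound} through the Gram matrix $\mat{A} \coloneqq \mat{B}^*\mat{B} \in \complex^{N\times N}$, exploiting the \QR--Cholesky correspondence of \cref{prop:qr_cholesky}. First I would record the identity relating the two residuals. Fix a pivot set $\set{S} \subseteq \{1,\ldots,N\}$, let $\mat{B}^{(i)}$ be the column-pivoted \QR residual obtained by orthogonalizing $\mat{B}$ against $\spn(\mat{B}(:,\set{S}))$, and let $\mat{A}^{(i)} = \mat{A} - \mat{A}(:,\set{S})\mat{A}(\set{S},\set{S})^\dagger\mat{A}(\set{S},:)$ be the corresponding Cholesky residual. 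Writing $\mat{\Pi}$ for the orthogonal projector onto $\spn(\mat{B}(:,\set{S}))$, one has $\mat{B}^{(i)} = (\Id - \mat{\Pi})\mat{B}$ and $\mat{A}(:,\set{S})\mat{A}(\set{S},\set{S})^\dagger\mat{A}(\set{S},:) = \mat{B}^*\mat{\Pi}\mat{B}$, so
\begin{equation*}
    (\mat{B}^{(i)})^*\mat{B}^{(i)} = \mat{B}^*(\Id - \mat{\Pi})\mat{B} = \mat{A}^{(i)}.
\end{equation*}
This is \cref{prop:qr_cholesky} in residual form; in particular $\norm{\mat{B}^{(i)}}_{\rm F}^2 = \tr(\mat{A}^{(i)})$, and $\mat{B}^{(i)}(:,\set{T})^*\mat{B}^{(i)}(:,\set{T}) = \mat{A}^{(i)}(\set{T},\set{T})$ for any column set $\set{T}$, so that $\norm{\mat{B}^{(i)}(:,s)}^2 = \mat{A}^{(i)}(s,s)$.

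Next I would use these identities to couple the randomized algorithms. The column-norm proposal rule of randomly pivoted \QR, $\prob\{s=j\} \propto \norm{\mat{B}^{(i)}(:,j)}^2$, coincides with the \RPCholesky rule $\prob\{s=j\} \propto \mat{A}^{(i)}(j,j)$; hence the block-proposal distributions of accelerated and block randomly pivoted \QR agree with those of accelerated and block \RPCholesky run on $\mat{A}$. Moreover, the matrix $\mat{C}^*\mat{C}$ fed to \RejectChol in \cref{alg:accelerated_rpqr}, where $\mat{C} = \mat{B}(:,\set{S}_i') - \mat{Q}\mat{F}(\set{S}_i',:)^* = \mat{B}^{(i)}(:,\set{S}_i')$, equals $\mat{A}^{(i)}(\set{S}_i',\set{S}_i')$, which is exactly the input $\mat{H}$ used by accelerated \RPCholesky in \cref{alg:accelerated_rpc}; thus the accept/reject decisions have the same law. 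Running the two procedures on a common source of randomness, the residual $\mat{B}^{(t)}$ after $t$ rounds of accelerated (or block) randomly pivoted \QR on $\mat{B}$ and the residual $\mat{A}^{(t)}$ after $t$ rounds of accelerated (or block) \RPCholesky on $\mat{A} = \mat{B}^*\mat{B}$ share the same pivot set, whence $\norm{\mat{B}^{(t)}}_{\rm F}^2 = \tr(\mat{A}^{(t)})$ by the identity above.

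Finally I would apply \cref{thm:main_bound} to $\mat{A} = \mat{B}^*\mat{B}$. Its eigenvalues are $\lambda_i(\mat{A}) = \sigma_i(\mat{B})^2$, so $\eta = \sum_{i>r}\sigma_i(\mat{B})^2 / \sum_{i\le r}\sigma_i(\mat{B})^2$, and the theorem gives $\expect[\tr(\mat{A}^{(t)})] \le (1+\varepsilon)\sum_{i>r}\sigma_i(\mat{B})^2$ once $tb \ge r/\varepsilon + (r+b)\log(1/(\varepsilon\eta))$. Taking expectations in $\norm{\mat{B}^{(t)}}_{\rm F}^2 = \tr(\mat{A}^{(t)})$ then yields the corollary. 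The only real work is the bookkeeping in the coupling step — verifying that each proposal distribution and each rejection probability in \cref{alg:accelerated_rpqr} is mirrored by accelerated \RPCholesky on the Gram matrix, which is immediate from the residual identity — and no new analytic estimates beyond \cref{thm:main_bound} are required.
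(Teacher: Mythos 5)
Your proposal is correct and follows exactly the route the paper intends: the paper offers no detailed proof of \cref{cor:qr}, simply asserting that it follows from \cref{thm:main_bound} via the \QR--Cholesky correspondence of \cref{prop:qr_cholesky}, and your argument is the natural fleshing-out of that assertion. The residual identity $(\mat{B}^{(i)})^*\mat{B}^{(i)} = \mat{A}^{(i)}$, the resulting coupling of proposal and rejection probabilities, and the substitution $\lambda_i(\mat{B}^*\mat{B}) = \sigma_i(\mat{B})^2$ are all correct.
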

Accelerated randomly pivoted \QR is a new algorithm, while block randomly pivoted \QR was introduced and analyzed in \cite[Thm.~1.2]{DRVW06}.
This earlier analysis has the large block size limitation that is discussed in \cref{sec:theory}.

\section*{Acknowledgements}
We thank Vivek Bharadwaj, Chao Chen, and Yijun Dong for helpful discussions.

\appendix

\section{Why blocking?\nopunct} \label{sec:why-blocking}
\Cref{fig:generation,fig:initial,fig:performance} demonstrate the empirical speed-ups of algorithms that simultaneously process large blocks of columns from kernel matrices.
\Cref{sec:kernels-submatrix} has discussed one reason for this speed-up, due to data movement.
This appendix describes two additional reasons why blocked (kernel) matrix algorithms can be faster: the fast Euclidean distances trick (\cref{sec:euclidean}) and matrix--matrix operations (\cref{sec:mat-mat}).

\subsection{The fast Euclidean distances trick} \label{sec:euclidean}

The fast Euclidean distances trick is a method for evaluating kernel matrix entries that take the special form
\begin{equation*}
    \kappa(\vec{x}_i,\vec{x}_j) = \phi(\norm{\vec{x}_i - \vec{x}_j}) \quad \text{for } i \in \set{S}_1, j\in\set{S}_2,
\end{equation*}
where $\phi: \mathbb{R} \rightarrow \mathbb{R}$ is a univariate function.
The trick relies on decomposing the square Euclidean distance into three summands:
\begin{equation} \label{eq:distances}
    \norm{\vec{x}_i - \vec{x}_j}^2 = \norm{\vec{x}_i}^2 - 2 \Re\{\vec{x}_i^* \vec{x}_j^{\vphantom{\top}}\} + \norm{\vec{x}_j}^2.
\end{equation}
It takes just $\order(d |\set{S}_1| + d |\set{S}_2|)$ operations to compute the square norms $\norm{\vec{x}_i}^2$ and $\norm{\vec{x}_j}^2$ for each $i \in \set{S}_1$ and $j \in \set{S}_2$.
Meanwhile, the inner products $\vec{x}_i^* \vec{x}_j^{\vphantom{*}}$ can be obtained from the entries of the inner product matrix
\begin{equation} \label{eq:fedt_multiply}
     \mat{X}(:, \set{S}_1)^* \mat{X}(:, \set{S}_2) \quad \text{where} \quad 
     \mat{X} = \begin{bmatrix}
        \vec{x}_1 & \cdots & \vec{x}_N
    \end{bmatrix}.
\end{equation}
Computing the inner product matrix requires $\mathcal{O}(d |\set{S}_1| |\set{S}_2|)$ operations using a single matrix--matrix, a highly optimized operation on modern computers \cite[Ch.~20.1]{DFF+03}.
The fast Euclidean distances trick explains why generating columns can be $10\times$ faster for a high-dimensional Gaussian kernel (\cref{fig:generation}, left) than an $\ell_1$ Laplace kernel (\cref{fig:generation}, right).

As a potential concern, the fast Euclidean distances trick can lead to \emph{catastrophic cancellations} \cite[sec.~1.7]{Hig02} in floating point arithmetic when two points $\vec{x}_i$ and $\vec{x}_j$ are close.
Catastrophic cancellations sometimes produce inaccurate results for block \RPCholesky, even after adding a multiple of the machine precision to the diagonal of the target matrix to promote stability.
In contrast, accelerated \RPCholesky avoids catastrophic cancellations by rejecting the small diagonal values as pivots.
The opportunity to use the fast Euclidean distances trick without catastrophic cancellations is a major advantage of accelerated \RPCholesky.
Nonetheless, to promote a simple comparison, all the experiments involving block \RPCholesky avoid the fast Euclidean distances trick; the code computes distances by subtracting $\vec{x}_i - \vec{x}_j$ and directly calculating the norm.

\subsection{Matrix--matrix operations}\label{sec:mat-mat}
The cost of \RPCholesky-type methods can be decomposed into the cost of generating columns of the kernel matrix and the cost of processing the columns to execute the algorithm.
Blocking leads to speed-ups in both steps of the algorithm.
For the second step, blocking organizes the operations into efficient matrix--matrix operations.
Accelerated \RPCholesky takes advantage of these optimized operations for an improved runtime.
Simple \RPCholesky, by contrast, uses only vector--vector and matrix--vector operations, which are slower.

\section{Deferred proofs from \cref{sec:theory}} \label{sec:deferred-proofs}
This section contains deferred proofs from \cref{sec:theory}.

\begin{proof}[Proof of \cref{lem:properties}]
The proof is based on a series of direct calculations.

\emph{\textbf{Parts (a)--(b).}} Observe that 
\begin{equation}
\label{eq:concave}
    \phi_{\alpha}(x) = \tfrac{\alpha}{4} - \tfrac{1}{\alpha}\bigl(x - \tfrac{\alpha}{2}\bigr)^2
\end{equation}
is a concave quadratic which is uniformly bounded by $\frac{\alpha}{4}$.
This immediately establishes part (a) and part (b) in the case $b = 1$.
Next assume part (b) is valid for some $b \geq 1$ and assume $x\le x' \le \alpha/2$.
By the boundedness property (a), $\phi_{\alpha}^{(b)}(x) \leq \phi_{\alpha}^{(b)}(x') \leq \alpha/4$.
Since $x \mapsto \phi_{\alpha}(x)$ is non-decreasing over the range $[0, \frac{\alpha}{2}]$, it follows:
\begin{equation*}
    \phi_{\alpha}^{(b+1)}(x) = \phi_{\alpha}(\phi_{\alpha}^{(b)}(x)) \leq \phi_{\alpha}(\phi_{\alpha}^{(b)}(x')) = \phi_{\alpha}^{(b+1)}(x'),
\end{equation*}
which establishes part (b) by induction.

\textbf{\emph{Part (c).}} Calculate
\begin{equation*}
    \phi_{\beta \cdot \alpha}(\beta \cdot x) 
    = \beta x - \tfrac{\beta}{\alpha} x^2 
    = \beta \bigl[ x - \tfrac{x^2}{\alpha} \bigr] = \beta \cdot \phi_{\alpha}(x),
\end{equation*}
which confirms part (c) when $b = 1$.
Homogeneity for $b \geq 1$ follows by induction.

\textbf{\emph{Part (d).}} The quadratic representation \cref{eq:concave} shows that $\phi_1$ is concave.
Assume that $\phi^{(b)}_1$ is concave for some $b \geq 1$.
Then, calculate for $\theta \in (0, 1)$:
\begin{align*}
    \theta \phi_1^{(b+1)}(x) + (1 - \theta) \phi_1^{(b+1)}(y)
    &= \theta \phi_1 (\phi_1^{(b)}(x)) + (1 - \theta) \phi_1(\phi_1^{(b)}(y)) \\
    &\leq \phi_1(\theta \phi_1^{(b)}(x) + (1 - \theta) \phi_1^{(b)}(y)) \\
    &\leq \phi_1(\phi_1^{(b)}(\theta x + (1 - \theta) y)) = \phi_1^{(b+1)}(\theta x + (1 - \theta) y).
\end{align*}
The first inequality is concavity of $\phi_1$, and the second inequality is concavity of $\phi_1^{(b)}$ and parts (a)--(b).
Conclude by induction that $\phi^{(b)}_1$ is concave for each $b \geq 1$.

The next step is to establish joint concavity of $(x,\alpha) \mapsto \phi_\alpha^{(b)}(x)$.
For any $\theta \in (0, 1)$:
\begin{align*}
    \theta \phi_{\alpha_1}^{(b)}(x_1)
    + (1 - \theta) \phi_{\alpha_2}^{(b)}(x_2)
    &= \theta \alpha_1 \phi_1^{(b)}\Bigl(\frac{x_1}{\alpha_1}\Bigr)
    + (1 - \theta) \alpha_2 \phi_1^{(b)}\Bigl(\frac{x_2}{\alpha_2}\Bigr) \\
    &\leq \bigl[\theta \alpha_1 + (1 - \theta) \alpha_2\bigr] \phi_1^{(b)} \Bigl(\frac{\theta x_1 + (1 - \theta) x_2}{\theta \alpha_1 + (1 - \theta) \alpha_2}\Bigr) \\
    &= \phi^{(b)}_{\theta \alpha_1 + (1 - \theta) \alpha_2}(\theta x_1 + (1 - \theta) x_2).
\end{align*}
The first equality is part (c), and the inequality is the concavity of $\phi_1^{(b)}$.
Part (d) is proven.

\textbf{\emph{Part (e).}}
Calculate
\begin{equation*}
    \phi_{\alpha}(x) = x - \frac{x^2}{\alpha} 
    \leq  x - \frac{x^2}{x + \alpha}
    = \frac{\alpha x}{x + \alpha} 
    = \frac{1}{\alpha^{-1} + x^{-1}},
\end{equation*}
which confirms part (e) in the case $b = 1$.
Next assume part (e) holds for some value $b \geq 1$.
Then using the monotonicity property (b),
\begin{align*}
    \phi_{\alpha}^{(b+1)}(x) 
    &= \phi_{\alpha}(\phi_{\alpha}^{(b)}(x)) \\
    &\leq \phi_{\alpha}\Bigl(\frac{1}{b \alpha^{-1} + x^{-1}}\Bigr) = \frac{1}{b \alpha^{-1} + x^{-1}} - \frac{1}{(b \alpha^{-1} + x^{-1}) (b + \alpha x^{-1})} \\
    &\leq \frac{1}{b \alpha^{-1} + x^{-1}} - \frac{1}{(b \alpha^{-1} + x^{-1})(b + 1 + \alpha x^{-1})} = \frac{1}{(b + 1) \alpha^{-1} + x^{-1}}.
\end{align*}
Therefore, part (e) holds by induction.
\end{proof}

\begin{proof}[Proof of \cref{lem:matrix}]
The proof is based on a series of direct calculations.

\textbf{\emph{Part (a).}} Assume $ \mat{\Lambda}_1 \preceq \mat{\Lambda}_2$.
Use the positive homogeneity and concavity of $(x, \alpha) \mapsto \phi^{(b)}_{\alpha}(x)$ (\cref{lem:properties}c--d) to calculate
    \begin{align*}
        \tfrac{1}{2} \phi^{(b)}_{\tr(\mat{\Lambda}_1)}(\mat{\Lambda}_1)
        &\preceq \tfrac{1}{2} \phi^{(b)}_{\tr(\mat{\Lambda}_1)}(\mat{\Lambda}_1)
        + \tfrac{1}{2} \phi^{(b)}_{\tr(\mat{\Lambda}_2 - \mat{\Lambda}_1)}(\mat{\Lambda}_2 - \mat{\Lambda}_1) \\
        &\preceq \phi^{(b)}_{\frac{1}{2} \tr(\mat{\Lambda}_2)}(\tfrac{1}{2} \mat{\Lambda}_2)
        = \tfrac{1}{2} \phi^{(b)}_{\tr(\mat{\Lambda}_2)}(\mat{\Lambda}_2).
    \end{align*}
    Equivalently, $\mat{\Phi}_b(\mat{\Lambda}_1) \preceq \mat{\Phi}_b(\mat{\Lambda}_2)$.
    Iterating this property establishes part (a).

    \textbf{\emph{Part (b).}}
    Use the concavity of $(x, \alpha) \mapsto \phi^{(b)}_{\alpha}(x)$ (\cref{lem:properties}d) to conclude:
    \begin{equation*}
        \theta \mat{\Phi}_b(\mat{\Lambda}_1) 
        + (1 - \theta) \mat{\Phi}_b(\mat{\Lambda}_2)
        \preceq
        \mat{\Phi}_b( \theta \mat{\Lambda}_1 + (1-\theta) \mat{\Lambda}_2)  \quad \text{for } \theta \in (0, 1).
    \end{equation*}
    This confirms part (b) in the case $t = 1$.
    Next assume part (b) holds for $t \geq 1$.
    Then
    \begin{align*}
        \theta \mat{\Phi}_b^{(t+1)}(\mat{\Lambda}_1) 
        + (1 - \theta) \mat{\Phi}_b^{(t+1)}(\mat{\Lambda}_2)
        &= \theta \mat{\Phi}_b(\mat{\Phi}_b^{(t)}(\mat{\Lambda}_1)) 
        + (1 - \theta) \mat{\Phi}_b(\mat{\Phi}_b^{(t)}(\mat{\Lambda}_2)) \\
        &\preceq \mat{\Phi}_b(\theta \mat{\Phi}_b^{(t)}(\mat{\Lambda}_1) + (1 - \theta) \mat{\Phi}_b^{(t)}(\mat{\Lambda}_2)) \\
        &\preceq \mat{\Phi}_b(\mat{\Phi}_b^{(t)}(\theta \mat{\Lambda}_1 + (1 - \theta) \mat{\Lambda}_2)) \\
        &=
        \mat{\Phi}_b^{(t+1)}( \theta \mat{\Lambda}_1 + (1-\theta) \mat{\Lambda}_2).
    \end{align*}
    The first inequality is the concavity of $\mat{\Phi}_b$, and the second inequality is the concavity of $\mat{\Phi}_b^{(t)}$ and the monotonicity property (a).
    Therefore, by induction, part (b) holds for $t\ge 1$.

    \textbf{\emph{Part (c).}} Calculate
    \begin{align*}
        & \mat{\Phi}_1( \theta \mat{A}_1 + (1-\theta) \mat{A}_2)
        - \theta \mat{\Phi}_1(\mat{A}_1)
        - (1 - \theta) \mat{\Phi}_1(\mat{A}_2) \\
        &= \frac{\theta \mat{A}_1^2}{\tr(\mat{A}_1)} 
        + \frac{(1-\theta) \mat{A}_2^2}{\tr(\mat{A}_2)}
        - \frac{[\theta \mat{A}_1 + (1 - \theta) \mat{A}_2]^2}{\theta \tr(\mat{A}_1) + (1 - \theta) \tr(\mat{A}_2)} \\
        &= \frac{\theta(1 - \theta) \tr(\mat{A}_1) \tr(\mat{A}_2)}{\theta \tr(\mat{A}_1) + (1 - \theta) \tr(\mat{A}_2)}
        \biggl[\frac{\mat{A}_1}{\tr(\mat{A}_1)}
        - \frac{\mat{A}_2}{\tr(\mat{A}_2)}
        \biggr]^2
        \succeq \mat{0}.
    \end{align*}
    This establishes the concavity of $\mat{\Phi}_1$ over all psd matrices.
\end{proof}

\section{Robust blockwise random pivoting} \label{sec:rbrp}
The robust blockwise random pivoting (RBRP) method was introduced by Dong, Chen, Martinsson, and Pearce to improve the performance of block randomly pivoted \QR \cite{DCMP23}.
Similar to accelerated randomly pivoted \QR,
each round of RBRP begins by generating a block of proposed pivots $\set{S}'_i = \{s_{ib+1}',\ldots,s_{(i+1)b}' \}$.
Then RBRP chooses a subset of the pivots by a deterministic, greedy method.
In detail, let $\mat{G}$ denote the residual submatrix with columns indexed by $\set{S}'_i$:
\begin{equation*}
    \mat{G} = \mat{B}(:, \set{S}'_i) - \Bhat(:, \set{S}'_i).
\end{equation*}
RBRP applies a partial \QR decomposition to $\mat{G}$ using greedy column pivoting, stopping when the squared Frobenius norm of $\mat{G}$ has been reduced by a factor $b$.
RBRP then accepts the pivots chosen before the tolerance is met and rejects all others.

\begin{table}[t]
    \centering
    \begin{tabular}{ccc}
        \toprule
         & Relative trace error & Runtime (sec) \\
        \midrule
        Block \RPCholesky & 3.89e-04 $\pm$ 1.40e-04 & 7.00 $\pm$ 0.63  \\
        Accelerated \RPCholesky	& 4.85e-07 $\pm$ 3.60e-08	& 7.43 $\pm$ 1.06 \\
        RBRP Cholesky &	6.62e-07 $\pm$ 1.35e-07 &	7.53 $\pm$ 0.80 \\
        \bottomrule
    \end{tabular}
    \caption{Runtime and relative error for rank-$1000$ approximation of the smile example from \cref{fig:initial} with block size $b=120$.
    Here the mean and standard deviation are computed empirically over $100$ independent trials.}
    \label{tab:rbrp}
\end{table}

RBRP can be extended to psd low-rank approximation, resulting in a \textit{robust block randomly pivoted Cholesky} method.
A comparison of RBRP Cholesky, block \RPCholesky, and accelerated \RPCholesky appears in \cref{tab:rbrp}.
Both accelerated \RPCholesky and RBRP Cholesky achieve dramatically lower trace error than standard block \RPCholesky on this example, demonstrating that both methods are effective at filtering redundant columns.
This example does suggest a few preliminary reasons to (slightly) prefer accelerated \RPCholesky over RBRP Cholesky, though.
First, RBRP Cholesky has $1.4\times$ higher trace error than accelerated \RPCholesky on this example, and the error of the output is more variable for RBRP Cholesky.
Second, RBRP Cholesky is slightly slower than accelerated \RPCholesky in our implementation.
Finally, rigorous error bounds are known for accelerated \RPCholesky (and \QR), but not known for RBRP methods.
Notwithstanding these difference, RBRP methods and accelerated \RPCholesky/\QR both appear to be fast and accurate in practice, and both methods significantly out-compete standard block \RPCholesky/\QR. 

\section{Existing algorithms}
This section provides pseudocode for \RPCholesky (\cref{alg:rpcholesky}) and block \RPCholesky (\cref{alg:block_rpc}).

\begin{algorithm}[H]
  \caption{\RPCholesky}
  \label{alg:rpcholesky}
  \begin{algorithmic}
    \Require Psd matrix $\mat{A} \in \mathbb{C}^{N\times N}$; approximation rank $k$
    \Ensure Matrix $\mat{F} \in \mathbb{C}^{N\times k}$ defining low-rank approximation $\Ahat = \mat{F}\mat{F}^*$; pivot set $\set{S}$
    \State Initialize $\mat{F} \leftarrow \mat{0}_{N\times k}$ and $\vec{d} \leftarrow \diag \mat{A}$ 
    \For{$i = 1$ to $k$}
    \State Sample pivot $s \sim \vec{d} / \sum_j d_j$
    \State Induct new pivot $\set{S} \gets \set{S} \cup \{s\}$
    \State $\vec{g} \leftarrow \mat{A}(:,s) - \mat{F}(:,1:i-1) \mat{F}(s,1:i-1)^*$
    \State $\mat{F}(:,i) \leftarrow \vec{g} / \sqrt{g_s}$
    \State $\vec{d} \leftarrow \vec{d} - |\mat{F}(:,i)|^2$
    \EndFor
  \end{algorithmic}
\end{algorithm}

\begin{algorithm}[H]
  \caption{Block \RPCholesky} \label{alg:block_rpc}
  \begin{algorithmic}
	    \Require Psd matrix $\mat{A} \in \mathbb{C}^{N\times N}$; block size $b$; number of rounds $t$
	    \Ensure Matrix $\mat{F} \in \mathbb{C}^{N \times bt}$ defining low-rank approximation $\Ahat = \mat{F}\mat{F}^*$; pivot set $\set{S}$ 
	    \State Initialize $\mat{F} \leftarrow \mat{0}_{N \times bk}$,
	    $\set{S} \leftarrow \emptyset$, and
	    $\vec{d} \leftarrow \diag \mat{A}$
	    \For{$i = 0$ to $t - 1$}
	    \State Sample $s_{ib + 1}, \ldots, s_{ib + b} \stackrel{\rm iid}{\sim} \vec{d} / \sum_j d_j$
	    \State $\set{S}' \leftarrow \Call{Unique}{\{s_{ib + 1}, \ldots, s_{ib + T}\}}$
	    \State $\set{S} \leftarrow \set{S} \cup \set{S}'$
	    \State $\mat{G} \leftarrow \mat{A}(:,\set{S}') - \mat{F}(:,1:iT) \mat{F}(\set{S}',1:iT)^*$
	    \State $\mat{R} \leftarrow \Call{Chol}{\mat{G}(\set{S}',:)}$ 
	    \State $\mat{F}(:,ib + 1:ib + |\set{S}'|) \leftarrow \mat{G} \mat{R}^{-1}$
	    \State $\vec{d} \leftarrow \vec{d} - \Call{SquaredRowNorms}{\mat{F}(:,ib+1:ib + b)}$
	    \EndFor
	    \State  Remove zero columns from $\mat{F}$
	  \end{algorithmic}
\end{algorithm}

\bibliographystyle{siamplain}
\bibliography{refs}
	
\end{document}